
\documentclass[11pt]{article}
\pdfoutput=1
\usepackage[top=1in, right=1in, left=1in, bottom=1in]{geometry}
\usepackage{amsthm}
\usepackage[hidelinks]{hyperref}

\usepackage{thmtools}
\usepackage{thm-restate}

\theoremstyle{plain}
\newtheorem{theorem}{Theorem}
\newtheorem*{theorem*}{Theorem}
\newtheorem{lemma}{Lemma}
\newtheorem{proposition}{Proposition}
\newtheorem*{lemma*}{Lemma}
\newtheorem{corollary}[theorem]{Corollary}

\renewenvironment{proof}{\noindent{\bf 
Proof.}\hspace*{1em}}{\qed\medskip\\}

\makeatletter
\long\def\@makecaption#1#2{
  \vskip 0.8ex
  \setbox\@tempboxa\hbox{\small {\bf #1:} #2}
  \parindent 1.5em  %
  \dimen0=\hsize
  \advance\dimen0 by -3em
  \ifdim \wd\@tempboxa >\dimen0
  \hbox to \hsize{
    \parindent 0em
    \hfil 
    \parbox{\dimen0}{\def\baselinestretch{0.96}\small
      {\bf #1.} #2
    } 
    \hfil}
  \else \hbox to \hsize{\hfil \box\@tempboxa \hfil}
  \fi
}
\makeatother %
\usepackage[numbers,square]{natbib}

\usepackage{enumerate}
\usepackage{amssymb}
\usepackage{amsbsy}
\usepackage{amsmath}
\usepackage{amsfonts}
\usepackage{latexsym}
\usepackage{graphicx}
\usepackage{color}
\usepackage{xifthen}
\usepackage{xspace}

\usepackage{algorithm}
\usepackage[noend]{algpseudocode}

\usepackage{multirow}
\usepackage[normalem]{ulem}
\usepackage{accents}

\usepackage{array}
\usepackage[T1]{fontenc}
\usepackage{thmtools}
\usepackage{thm-restate}

\usepackage{etoolbox}
\newtoggle{restatements}
\newcommand{\restate}[1]{\iftoggle{restatements}{#1}{}}
\newtoggle{heavyplots}
\newcommand{\heavyplot}[1]{\iftoggle{heavyplots}{#1}{}}

\usepackage{xfrac}

\newcommand{\mc}[1]{\mathcal{#1}}

\newcommand{\wt}[1]{\widetilde{#1}}  %
\newcommand{\wb}[1]{\overline{#1}} %

\newcommand{\norm}[1]{\left\|{#1}\right\|} %
\newcommand{\norms}[1]{\|{#1}\|} %

\newcommand{\opnorm}[1]{\norm{#1}_{2}}  %
\newcommand{\R}{\mathbb{R}} %
\newcommand{\N}{\mathbb{N}} %
\renewcommand{\P}{\mathbb{P}}	%
\newcommand{\I}{\mathbb{I}} %

\providecommand{\argmin}{\mathop{\rm argmin}}

\providecommand{\diag}{\mathop{\rm diag}}

\newcommand{\hinge}[1]{\left[{#1}\right]_+} %

\providecommand{\minimize}{\mathop{\rm minimize}}

\newcommand{\ceil}[1]{\left\lceil{#1}\right\rceil}

\newcommand{\half}{\frac{1}{2}}

\newcommand{\defeq}{\triangleq}
\newcommand{\eqdef}{\triangleq}

\newcommand{\grad}{\nabla}
\newcommand{\hess}{\nabla^2}

\def\ie{\emph{i.e.\ }}

\newcommand{\T}{T} %
\newcommand{\eps}{\varepsilon}
\newcommand{\gap}{\mathsf{gap}}
\newcommand{\del}{\partial}

\newcommand{\s}{x_{\star}}

\newcommand{\Ls}{L_{\star}}
\newcommand{\As}{A_{\star}}

\newcommand{\rs}{\rho \norm{\s}}
\newcommand{\ns}{\norm{\s}} %
\newcommand{\nss}{\norm{\s}^2} %
\newcommand{\IndNC}{}
\newcommand{\gammaplus}{\gamma_+}

\newcommand{\tgrow}{\tau_{\mathrm{grow}}}
\newcommand{\tconv}{\tau_{\mathrm{conv}}}
\newcommand{\tiltgrow}{\tilde{\tau}_{\mathrm{grow}}}
\newcommand{\tiltconv}{\tilde{\tau}_{\mathrm{conv}}}

\newcommand{\sigbar}{\wb{\sigma}}

\newcommand{\smallval}{\nu}

\newcommand{\CP}{Cauchy-point}
\newcommand{\SP}{Solve-problem}
\newcommand{\SSP}{Solve-subproblem}
\newcommand{\SFSP}{Solve-final-subproblem}
\newcommand{\Kprog}{K_{\mathrm{prog}}}
\newcommand{\Kmax}{K_{\max}}
\newcommand{\slb}{r}
\newcommand{\siter}{\Delta^\star}

\newcommand{\glb}{\underline{g}}
\newcommand{\kf}{{K}}
\newcommand{\yout}{y_\mathrm{out}}
\newcommand{\epsgrad}{\eps_{\mathrm{g}}}

\newcommand{\callCP}[1]{\hyperref[func:CP]{\Call{\CP}{#1}}}
\newcommand{\callSP}[1]{\hyperref[func:SP]{\Call{\SP}{#1}}}
\newcommand{\callSSP}[1]{\hyperref[func:SSP]{\Call{\SSP}{#1}}}
\newcommand{\callSFSP}[1]{\hyperref[func:SFSP]{\Call{\SFSP}{#1}}}

\newcommand{\supind}[1]{^{\left({#1}\right)}}
\newcommand{\opt}{^\star}

\newtheorem{assumption}{Assumption}

\numberwithin{theorem}{section}
\numberwithin{lemma}{section}
\numberwithin{claim}{section}
\numberwithin{proposition}{section}
\numberwithin{corollary}{section}

\makeatletter
\let\c@proposition\c@theorem
\let\c@corollary\c@theorem
\let\c@lemma\c@theorem
\let\c@claim\c@theorem
\makeatother

\title{Gradient Descent Finds the 
    Cubic-Regularized Non-Convex Newton Step}

\author{Yair Carmon ~~~ John C.\ Duchi \\
  Stanford University \\
\texttt{\{\href{mailto:yairc@stanford.edu}{yairc}
	,\href{mailto:jduchi@stanford.edu}{jduchi}\}@stanford.edu}}

\date{}

\toggletrue{heavyplots}

\begin{document}
\maketitle

\begin{abstract}
  We consider the minimization of non-convex quadratic forms regularized by 
  a cubic term, which exhibit multiple saddle points and poor local minima. 
  Nonetheless, we prove that, under mild assumptions, gradient descent 
  approximates the \emph{global minimum} to within $\eps$ accuracy in 
  $O(\eps^{-1}\log(1/\eps))$ steps for large $\eps$ and $O(\log(1/\eps))$ 
  steps for small $\eps$ (compared to a condition number we define), with at 
  most logarithmic dependence on the problem dimension. When we use 
  gradient descent to approximate the cubic-regularized 
  Newton step, our result implies a rate of convergence to second-order 
  stationary points of general smooth non-convex functions.
\end{abstract}

\section{Introduction}

We study the optimization problem 
\begin{equation}
  \minimize_{x\in\R^{d}}~
  f\left(x\right) \defeq \half x^{\T}Ax
  +b^{\T}x+\frac{\rho}{3}\left\Vert x\right\Vert ^{3},
  \label{eq:problem}
\end{equation}
where the matrix $A$ is symmetric and possibly indefinite. The problem
\eqref{eq:problem} arises in Newton's method with cubic regularization, for 
(approximately) minimizing a general smooth function $g$. The method 
consists of the iterative 
procedure %
\begin{equation}\label{eq:nesterov-polyak}
y_{t+1}=y_t + \argmin_{x\in\R^{d}}\left\{ \nabla 
g(y_{t})^{\T}x+\half x^{\T}\nabla^{2}g(y_{t})x
+\frac{\rho_t}{3}\norm{x} ^{3}\right\},
\end{equation}
where every iteration requires solution of a problem of the
form~\eqref{eq:problem} and choice of the parameter $\rho_t$.
\citet{Griewank81} first proposed the 
scheme~\eqref{eq:nesterov-polyak}
 (in 
a more general setting), and then
\citet{NesterovPo06} and~\citet{WeiserDeEr07}
independently rediscovered it.
Cubic regularization methods, as well as the closely related trust-region 
methods, are among the 
most practically successful and 
theoretically sound approaches to non-convex 
optimization~\cite{ConnGoTo00, NesterovPo06, CartisGoTo11}. Indeed,
\citet{NesterovPo06} establish that 
$O(\epsilon^{-3/2})$ iterations of the form~\eqref{eq:nesterov-polyak} 
suffice to find an $\epsilon$-second-order-stationary point of $g$, 
meaning a 
point $y_\epsilon$ such that $\norm{\grad g(y_\epsilon)}\le\epsilon$ and 
$\lambda_{\min}(\hess g(y_\epsilon)) \gtrsim -\sqrt{\epsilon}$. However, 
this complexity 
guarantee does not account for the computational cost of solving 
subproblems 
of the form~\eqref{eq:problem}. 

In this work, we study what is perhaps the simplest algorithm for
approximately solving the problem~\eqref{eq:problem}: gradient descent. Each
iteration of gradient descent consists of the transformation $x\mapsto
x-\eta \grad f(x) = x-\eta(Ax + b + \rho \norm{x} x)$ for a step-size
$\eta\in\R$. Thus, the computational cost of a gradient descent iteration is
essentially that of multiplying the matrix $A$ with a vector. Iterative 
methods requiring only matrix-vector products are called 
\emph{matrix-free}, %
and are especially appealing in the setting when
$d$ is large and $A$ has structure, such as sparsity
(cf.~\cite{TrefethenBa97}), which enables efficient computation of $Ax$.
Notably, when $A$ is a Hessian as in~\eqref{eq:nesterov-polyak}, it is often
possible to compute $Ax$ in time linear in $d$~\cite{Pearlmutter94,
  Schraudolph02}, comparable to the time to evaluate a
gradient.

We do not claim that gradient descent is the most efficient method for
solving problem~\eqref{eq:problem}. Indeed, popular matrix-free Krylov
subspace solvers~\cite{CartisGoTo11} provide faster convergence by
definition, as the first $k$ iterates of gradient descent lie in the Krylov
subspace of order $k$, $\mathrm{span}\{b, Ab, \ldots, A^{k-1}b\}$. Moreover,
two-term recursions such as the heavy-ball method~\cite{Polyak64} and
Nesterov's accelerated gradient descent~\cite{Nesterov83} outperform
gradient descent in convex problems, with results extending to several
non-convex scenarios~\cite{CarmonDuHiSi18}. Yet we
believe gradient descent---as a workhorse for numerous large-scale
problems---is a valuable object of study, for the following reasons.

\begin{enumerate}[(1)]
\item By proving concrete upper bounds on the number of gradient steps 
  required to achieve an $\eps$-accurate solution to the 
  problem~\eqref{eq:problem}, we obtain a benchmark for more 
  sophisticated algorithms, such as Krylov subspace methods,
  and providing dimension-independent guarantees on the number 
  of matrix-vector products such methods require to 
  solve~\eqref{eq:problem} to $\eps$ accuracy (see further discussion in 
  \S\ref{sec:intro-related} below).
\item Analysis of optimization methods operating on convex quadratic 
  objectives provides important insight about the performance of these 
  methods for general nonlinear objectives close to a local minimum. 
  Analogously, we believe that analyzing gradient descent  
  on the simple structured non-convex objective~\eqref{eq:problem} will 
  provide useful intuition about the way gradient descent generally 
  navigates saddle 
  points. 
  We show that saddle points may cause gradient descent to stall, but that 
  the overall effect of this stalling on the rate of convergence is bounded, 
  and that the presence of non-convexity slows convergence by at 
  most a logarithmic factor. We expect a similar qualitative picture to 
  emerge for other non-convex problems.
\item Unlike more sophisticated methods, gradient descent (with 
  properly chosen step sizes) is often 
  effective in the stochastic setting where only a noisy estimate of the 
  gradient is available. This effectiveness is well-understood for convex 
  objectives~\cite{Duchi18,BottouCuNo18},
  and it extends to several non-convex problems (notably neural network
  training), for reasons we do not fully  
  understand~\cite{LeCunBeHi15}. Analyzing gradient descent on the 
  non-stochastic problem~\eqref{eq:problem} is a first step towards 
  understanding stochastic gradient descent methods beyond convex
  problems, which may prove useful for stochastic variants of 
  the cubic-regularized Newton's method~\eqref{eq:nesterov-polyak} as 
  well a broader theory of non-convex optimization with stochastic 
  gradient methods.
\end{enumerate}

\subsection{Outline of our contribution}\label{sec:intro-outline}

We begin our development in Section~\ref{sec:prel} with a number of
definitions and results, specifying our assumptions,
characterizing the solution to problem~\eqref{eq:problem}, and proving that 
gradient descent converges to the \emph{global minimum} of
$f$. Additionally, we show that gradient descent
produces iterates with monotonically increasing norm. This  
property is essential to our results, and we use it extensively throughout the 
paper.

In Section~\ref{sub:statement} we provide non-asymptotic rates of
convergence for gradient descent, which are our main results: gradient
descent finds a point $x$ such that $f(x)\le \inf_{\s\in\R^d}f(\s) + \eps$,
in a number of steps that scales as $\log\frac{1}{\eps}$ for
well-conditioned problems and $\frac{1}{\eps}\log\frac{1}{\eps}$ for
poorly-condition problems (for a condition number we define explicitly).  We
outline our proofs in Section~\ref{sec:proofs}, deferring technical
arguments to appendices as necessary. Our first convergence guarantee
includes the term $\log(1/|v_1^{\T}b|)$, where $v_1$ is the eigenvector
corresponding to the smallest eigenvalue of $A$. When 
$v_1^{\T}b=0$---as happens in the so-called ``hard case'' for 
non-convex quadratic problems~\cite{ConnGoTo00}---this term becomes 
infinite. Nevertheless, by applying gradient descent on a slightly perturbed 
problem we achieve convergence rates scaling no worse than 
logarithmically in problem dimension, for any value of $v_1^{\T}b$.
Our results have close connections with the
convergence rates of gradient descent on smooth convex functions and of 
the power method, which we discuss in Section~\ref{sec:discussion}.

We illustrate our results with a number of experiments, which we
report in Section~\ref{sec:experiments}. We explore the trajectory of gradient descent on 
non-convex problem instances, demonstrating its dependence on problem 
conditioning and the presence of saddle points. We then illustrate our 
convergence rate guarantees by running gradient descent over an ensemble 
of random problem instances. This experiment suggests the sharpness of 
our theoretical analysis.

In Section~\ref{sec:linesearch} we extend our scope to step sizes
chosen by exact line search.
If the search is unconstrained,
the method may fail to converge to the global minimum,
but success is guaranteed for a guarded variation of exact line 
search.  Unfortunately, we have
thus far been unable to give rates of convergence for this scheme,
though its empirical behavior is at least as strong as standard gradient
descent.

As our initial motivation for solving problem~\eqref{eq:problem} is the 
regularized Newton's method \eqref{eq:nesterov-polyak}, in
Section~\ref{sec:trustregion} we consider a method for minimizing a general 
non-convex function $g$, which approximates the
iterations~\eqref{eq:nesterov-polyak} via gradient descent. In keeping with  
the theoretical focus of this work, the method is not designed to be 
efficient in practice, but rather to showcase how our analysis applies in 
the context of subproblem solutions. 
  When $g$ has
$2\rho$-Lipschitz continuous Hessian, we show that this
method finds a point $y_\epsilon$ such that $\norm{\nabla g(y_\epsilon)} 
\le \epsilon$ and $\lambda_{\min}(\hess g(y_\epsilon)) \ge  
-\sqrt{\rho\epsilon}$, in $\epsilon^{-2}$ gradient and 
Hessian-vector product evaluations (ignoring constant and logarithmic 
terms), 
which is the rate for gradient descent applied
directly on $g$~\cite[Ex.~1.2.3]{Nesterov04}. However, unlike gradient 
descent, we provide the additional second-order guarantee 
$\lambda_{\min}(\hess g(y_\epsilon)) \ge 
-\sqrt{\rho\epsilon}$, and thus give a
first-order method with non-asymptotic convergence guarantees to second-order
stationary points at essentially no additional cost over gradient
descent. We remark that concurrent  
works~\cite{AgarwalAlBuHaMa17,CarmonDuHiSi18} give algorithms 
attaining such second-order stationary guarantee with an improved 
first-order complexity scaling roughly as $\epsilon^{-7/4}$.

\subsection{Related work}\label{sec:intro-related}

Despite its non-convexity, the problem~\eqref{eq:problem} can be solved 
to machine precision by means of iterative solution to linear systems of the 
form $(A+\lambda I)x = -b$~\cite{CartisGoTo11}. However, the cost of this 
approach generally grows rapidly with the problem dimension $d$. To 
address this, several researchers propose matrix-free solvers that allow 
trading between solution accuracy and computational cost. 
\citet{Griewank81} and~\citet{WeiserDeEr07} propose variants of the 
conjugate gradient method, \citet{WeiserDeEr07} and~\citet{CartisGoTo11} 
propose Krylov subspace solvers based on the Lanczos method, 
and~\citet{BianconciniLiMoSc15} propose a variant of steepest descent. For 
generic (\ie ``easy case'') problems and assuming infinite precision 
arithmetic, Krylov subspace methods solve~\eqref{eq:problem} exactly in 
$d$ iterations~\cite{CartisGoTo11}, but such guarantees provide limited
insight for high-dimensional problems, where the number of iterations is
typically $\ll d$. Ideally, a matrix-free solver should provide an 
$\eps$-accurate solution to~\eqref{eq:problem} in a number of iterations 
(matrix-vector products)  independent of the problem dimension $d$, 
growing instead as the desired tolerance $\eps$ decreases, as is the case 
for first-order methods in convex optimization. The above-mentioned 
works empirically demonstrate strong performance and scaling to 
high-dimensional problems, but do not provide such dimension-free 
convergence guarantees. Our main result shows that gradient descent 
solves~\eqref{eq:problem} to $\eps$ accuracy in $O(\log(d/\eps)/\eps)$ 
steps, giving a (nearly) dimension-free convergence guarantee. 
Krylov subspace 
methods provide solutions at least as accurate as those of gradient descent 
running the same number of iterations, and therefore our results imply the 
same convergence guarantee for them as well. 

The iterative solvers proposed 
in~\cite{Griewank81,WeiserDeEr07,CartisGoTo11,BianconciniLiMoSc15}
approximate subproblem solutions in the cubic regularization 
scheme~\eqref{eq:nesterov-polyak}. It is therefore interesting to 
understand the total computational cost (in terms of gradient and 
Hessian-vector product evaluations) of finding an 
$\epsilon$-second-order-stationary point for the function $g$ using 
these  approximate solvers. \citet{CartisGoTo11b} show that solving the 
subproblem with a single subspace iteration (known as the Cauchy point) 
is sufficient for the overall method to converge to an $\epsilon$-stationary 
point of $g$ in $O(\epsilon^{-2})$ outer iterations. However, second-order 
stationarity is not guaranteed, and the Nesterov-Polyak rate of 
$O(\epsilon^{-3/2})$ outer iterations is lost. One naturally asks how many 
more iterations of the subproblem solver are needed to restore these 
guarantees. In a follow-up work, \citet{CartisGoTo12} address 
this question by providing conditions on the quality of subproblem 
approximations which suffice to guarantee 
$\epsilon$-second-order-stationarity after $O(\epsilon^{-3/2})$ outer 
iterations. It is unclear how to meet these conditions with a 
matrix-free method, and in Section~\ref{sec:trustregion} we show that solving 
the subproblems with at most $\wt{O}(\epsilon^{-1/2})$ gradient descent 
steps  
guarantees $\epsilon$-second-order-stationarity after 
$O(\epsilon^{-3/2})$ outer 
iterations.

Work on the cubic-regularized problem~\eqref{eq:problem} parallels and 
draws from the literature on the quadratic trust region
problem~\cite{ConnGoTo00, GouldLuRoTo99, GouldRoTh10, ErwayGi09}, 
where one
replaces the regularizer $(\rho/3)\norm{x}^3$ with the constraint
$\norm{x} \le R$. Here too, exact solutions are available but scale poorly
with dimension, and leading matrix-free solvers include the Steihaug-Toint 
truncated conjugate gradient method and GLTR, a Lanczos-based subspace 
method~\cite{GouldLuRoTo99}. 
\citet{TaoAn98} give an
analysis of projected gradient descent with a restart scheme that guarantees 
convergence to the 
global 
minimum; however, the
number of restarts may be proportional to problem dimension,
suggesting potential difficulties for large-scale problems.~\citet{BeckVa18} 
show convergence to the global minimum for a 
family of simple first-order methods that includes projected gradient 
descent. None of these works provides a dimension-free bound on the 
number of iterations required to solve the subproblem to $\eps$ accuracy.

\citet{HazanKo16} address this issue, giving a first-order method 
that
solves the trust-region problem with an accelerated, nearly dimension-free 
rate. 
They find an
$\eps$-suboptimal point for the trust region problem in 
$\wt{O}(1/\sqrt{\eps})$
matrix-vector multiplies 
by reducing the trust-region problem to a sequence of approximate
eigenvector problems. 
\citet{Ho-NguyenKi16} provide a different perspective, showing how a single
eigenvector calculation can be used to reformulate the non-convex quadratic
trust region problem into a convex QCQP, efficiently solvable with first-order 
methods. 

Concurrent to this work, \citet{AgarwalAlBuHaMa17} show the same 
accelerated rate of 
convergence for the cubic problem~\eqref{eq:problem} via
reductions to fast approximate matrix inversion and eigenvector computations.
Their rates of convergence are better than those we achieve when $\eps$ is
large relative to problem conditioning. However, while these works indicate 
that 
solving \eqref{eq:problem} is never harder than approximating the smallest 
eigenvector of $A$, the regime of linear convergence we identify shows that it 
is sometimes much easier.
In work
published during the preparation of this paper, \citet{ZhangShLi17} 
demonstrate that Krylov subspace methods
indeed achieve (accelerated) linear rates of convergence for
trust-region problems, suggesting that such results may be possible
for the cubic-regularized problem~\eqref{eq:problem} as well.

Another related line of work is the study of the behavior of gradient descent 
around saddle-points and its ability to escape them~\cite{GeHuJiYu15, 
LeeSiJoRe16,Levy16}.
A common theme in these works is an ``exponential growth'' mechanism that 
pushes the gradient descent iterates away from critical points with negative 
curvature. 
This mechanism plays a prominent role in our analysis as well, 
highlighting the implications of negative curvature for the dynamics of 
gradient descent.
\section{Preliminaries and basic convergence guarantees}
\label{sec:prel}

We begin by defining some (mostly standard) notation.  
Our problem~\eqref{eq:problem} is to solve
\begin{equation*}
  \minimize_{x\in\R^{d}}~f\left(x\right)\defeq
  \half x^{\T}Ax+b^{\T}x+\frac{\rho}{3} \norm{x}^3,
\end{equation*}
where $\rho>0$, $b\in\R^{d}$ and $A\in \R^{d \times d}$ is a symmetric
(possibly indefinite) matrix, and $\norm{\cdot}$ denotes the Euclidean
norm. 
The eigenvalues of the matrix $A$ are
$\lambda\supind{1} (A) \le \lambda\supind{2} (A) \leq
\cdots\leq\lambda\supind{d}(A)$,
where any of the $\lambda\supind{i}(A)$ may be negative.  We
define the eigengap of $A$ by
$\gap \defeq \lambda\supind{k}(A) - \lambda\supind{1}(A)$ where
$k$ 
is the
first eigenvalue of $A$ strictly larger than $\lambda\supind{1}(A)$. Fix  
$v_1, \ldots, v_d$ to be orthonormal eigenvectors of $A$ such that $A v_i 
= \lambda\supind{i}(A) v_i$, and 
$A = \sum_{i = 1}^d \lambda\supind{i}(A) v_i v_i^T$. 
Importantly, throughout the paper we
work in the eigenbasis of $A$, and for any vector $w \in \R^d$ we let
\begin{equation}\label{eq:superscript-convention}
  w\supind{i} = v_i^T w~\mbox{denote the $i$th
    coordinate of $w$ in the eigenbasis of $A$.} 
\end{equation}

We let $\opnorm{\cdot}$ be the $\ell_2$-operator norm, so $\opnorm{A} =
\max_{u : \norm{u} = 1} \norm{A u}$, and define
\begin{equation*}
  \gamma \defeq -\lambda\supind{1}(A)
  ~~ \mbox{and} ~~
  \beta \defeq \opnorm{A} = 
  \max\{|\lambda\supind{1}(A)|,|\lambda\supind{d}(A)|\},
\end{equation*}
so that the function $f$ is non-convex if and only if $\gamma > 0$. Our 
results also hold when $\beta \ge
\opnorm{A}$ rather than its exact value.  We say a function $g$ is
$L$-\emph{smooth} on a convex set $X$ if $\norm{\grad g(x) - \grad g(y)} \le
L \norm{x - y}$ for all $x, y \in X$; this is equivalent to $\opnorm{\hess
  g(x)} \le L$ for Lebesgue almost every $x \in X$ and is equivalent to the
bound $ | g(x) - g(y) - \nabla g(y)^{\T}(x - y)| \le \frac{L}{2} \norm{x -
  y}^2 $ for $x, y \in X$.

\subsection{Characterization of $f$ and its global minimizers}

Throughout the paper, we let $\s$ denote a solution to 
problem~(\ref{eq:problem}), \ie 
a global minimizer of $f$, and define the matrix
\begin{equation*}
  \As \defeq A+\rho \norm{\s} I,
\end{equation*}
where $I$ is the $d\times d$ identity matrix. 
We have the following characterization for $\s$,
\begin{proposition}[{cf.~\cite[Theorem 3.1]{CartisGoTo11}}] 
	\label{prop:classic-char}
	A solution $\s$ of problem~\eqref{eq:problem} satisfies
	\begin{equation}
	\nabla f(\s) = \As\s+b =0 
	~~ \mbox{and} ~~ \rho \norm{\s} \ge \gamma,
	\label{eqn:optimality}
	\end{equation}
	and $\s$ is unique whenever $\rho \norm{\s} > \gamma$.
\end{proposition}
We may write the gradient and Hessian of $f$ as
\begin{gather*}
  \grad f\left(x\right)
  = \As (x - \s)
  - \rho (\norm{\s} - \norm{x}) x 
  ~ \mbox{and} ~
  \grad^2 f(x) = A + \rho \norm{x} I + \rho \frac{xx^T}{\norm{x}}.
\end{gather*}

\begin{subequations}
  \label{eqn:fs-bounds}
  The globally minimal value of $f$ admits the expression and
  bound
  \begin{equation}
    \label{eq:fs-lower-bound}
    f\left(\s\right)=\frac{1}{2}\s^{\T}As+b^{\T}\s+\frac{\rho\| \s\| 
    ^{3}}{3}=-\frac{1}{2}\s^{\T}\As\s-\frac{\rho\| \s\| 
    ^{3}}{6}\leq-\frac{\rho\| \s\| ^{3}}{6} ,
  \end{equation}
  and, using the fact that $\s^{\T}\As\s=-b^{\T}\s\leq\| b\| \| \s\| $,
  we derive the lower bound
  \begin{equation}
    \label{eq:fs-upper-bound}
    f\left(\s\right)\geq-\frac{1}{2}\| b\| \| \s\| - \frac{\rho \norm{\s}^3}{6}.
  \end{equation}
\end{subequations}
Algebraic manipulation also shows that
\begin{equation}\label{eq:fx-s-expression}
f\left(x\right)=f\left(\s\right)+\frac{1}{2}(x-\s)^T\As(x-\s) 
+\frac{\rho}{6}\left(\| \s\| -\| x\| \right)^{2}\left(\| \s\| +2\| x\| \right),
\end{equation}
which makes it clear that $\s$ is indeed the global minimum,
as both of the $x$-dependent terms are non-negative and minimized at
$x=\s$, and the minimum is unique whenever $\norm{\s} > \gamma/\rho$,
because $\As \succ 0$ in this case.

The global minimizer admits the following equivalent characterization whenever
the vector $b$ is not orthogonal to the eigenspace associated with
$\lambda\supind{1}(A)$.
\begin{proposition}\label{prop:s}
  If $b^{(1)} \neq 0$, $\s$ is the unique solution to the system defined by 
  \[
  \grad f(s) =0~~\mbox{and}~~
  b^{(1)}s^{(1)} \le 0.
  \]
\end{proposition}
\begin{proof}
  Let $\s'$ satisfy $\grad f(\s') = 0$ and $b^{(1)}\s'^{(1)} \le 0$. Focusing on
  the first (eigen)coordinate, we have
    $0 = [\grad f(\s')]^{(1)} = (-\gamma + \rho\|\s'\|)\s'^{(1)} + b^{(1)}$.
  Therefore, $b^{(1)}\neq 0$ implies both $\s'^{(1)} \neq 0$ and
  $-\gamma + \rho\|\s'\| \neq 0$. This strengthens the inequality
  $b^{(1)}\s'^{(1)} \le 0$ to $b^{(1)}\s'^{(1)} <
  0$. Hence
    $-\gamma + \rho\|\s'\| = -b^{(1)}\s'^{(1)}/[\s'^{(1)}]^2 > 0$;
  by Proposition~\ref{prop:classic-char}, if a critical point satisfies 
  $\rho\norm{\s'} > \gamma$ it is
  the unique global minimum.
\end{proof}

The norm of $\s$ plays 
an important role in
our analysis, so we provide a number of bounds on it. 
\begin{subequations}
  \label{eqn:b-bounds-on-s}
  First, observe that $\norm{b} = \norms{\As \s} \ge (-\gamma +\rs) 
  \norm{\s}$. Solving for
  $\|\s\|$  gives the upper bound
  \begin{equation} \label{eq:R-def}
    \| \s\| \leq \frac{\gamma}{2\rho} + \sqrt{\left(\frac{\gamma}{2\rho}\right)^{2}
      +\frac{\| b\| }{\rho}} \leq \frac{\beta}{2\rho}
    +\sqrt{\left(\frac{\beta}{2\rho}\right)^{2}+\frac{\| b\| }{\rho}} \eqdef R
  \end{equation}
  where we recall that $\beta = \opnorm{A}
  \ge |\gamma|$.
  An analogous lower bound on $\norm{\s}$ is available:
  we have $\norm{\s} \ge \gamma / \rho$, and if $b\supind{1} \neq 0$,
  then
  $\|\s\| =\|\As^{-1}b\| \ge |b^{(1)}|/(-\gamma +\rho \|\s\|)$ implies
  \begin{equation}
    \| \s\| \geq\frac{\gamma}{2\rho}+\sqrt{\left(\frac{\gamma}{2\rho}\right)^{2} 
    + \frac{|b^{(1)}|}{\rho}}
    \geq -\frac{\beta}{2\rho}+\sqrt{\left(\frac{\beta}{2\rho}\right)^{2} + 
    \frac{|b^{(1)}|}{\rho}} = R - \frac{\beta}{\rho}.
    \label{eq:s-lower-R-def}
  \end{equation}
\end{subequations}
We can also prove a different lower bound with the similar form
\begin{equation}
  \label{eq:Rc-def}
  \| \s\| \geq R_{c}\defeq
  \frac{-b^{\T}Ab}{2\rho\| b\| ^{2}}+\sqrt{\left(\frac{b^{\T}Ab}{2\rho\| b\| ^{2}}\right)^{2}+\frac{\| b\| }{\rho}}
  \geq
  -\frac{\beta}{2\rho}+\sqrt{\left(\frac{\beta}{2\rho}\right)^{2}+\frac{\| b\| }{\rho}} .
\end{equation}
The quantity $R_c$ is the \emph{Cauchy radius}~\cite{ConnGoTo00}---the 
magnitude of the (global) minimizer of $f$ in the subspace spanned by $b$:
$R_c = \argmin_{\zeta\in\R} f(-\zeta b/\|b\|)$.  To see the claimed lower
bound~\eqref{eq:Rc-def}, set $x_c = -R_c b/\|b\|$ (the \emph{Cauchy 
point}) and note that
$f(x_c) = -(1/2)\|b\|R_c -(\rho/6)R_c^3$. Therefore,
  $0 \le f(x_c) - f(\s) \le \frac{1}{2}\|b\|(\|\s\|-R_c) + \frac{1}{6}\rho(\|\s\|^3 - 
  R_c^3)$,
which implies $\|\s\| \ge R_c$. 

For matrices $A$ with distinct eigenvalues, $f$ may have a single 
suboptimal local minimizer, a single local maximizer and up to $2(d-1)$ 
saddle points~\cite[Section 3]{Griewank81}; see
Figure~\ref{fig:simple-gradients} for an example with $d=2$.

\subsection{Properties and convergence of gradient descent}

\begin{figure}
  \begin{center}
	\includegraphics[height=6cm]{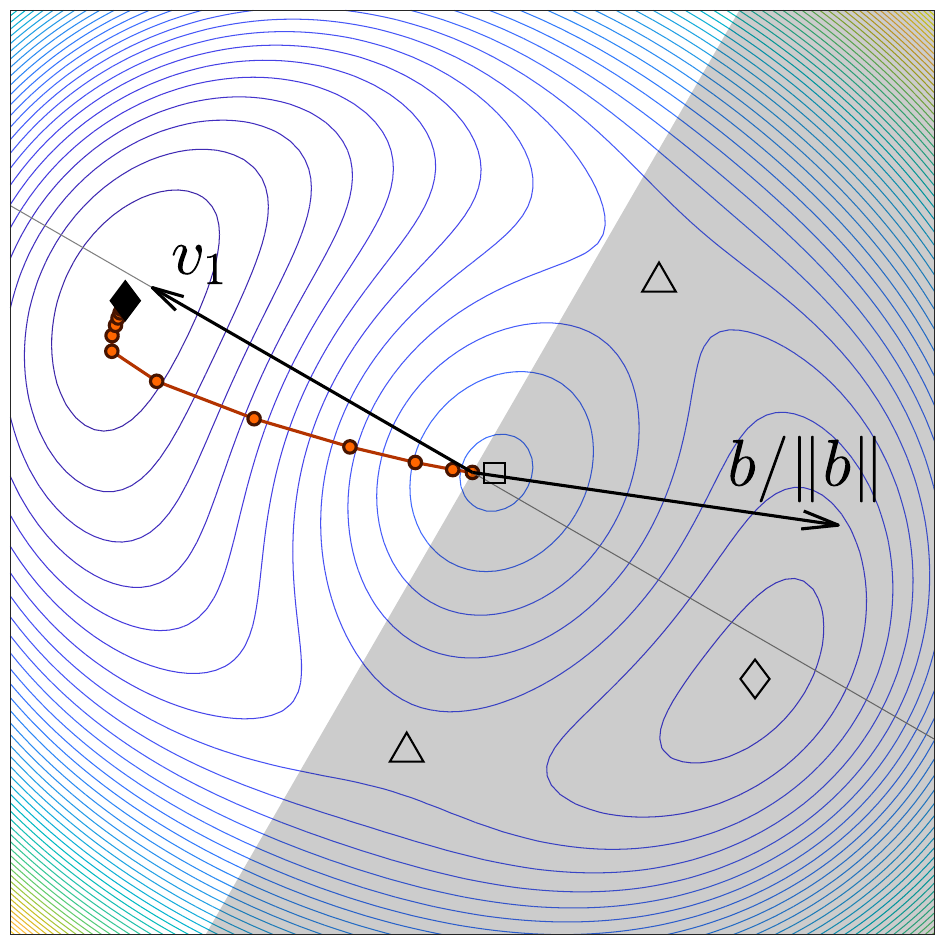}
    \caption{\label{fig:simple-gradients} Contour plot of
    	  a two-dimensional instance of \eqref{eq:problem}, featuring a 
    	  local maximum ($\square$), saddle points ($\triangle$), and
    	  local minima ($\lozenge$). The line of circles indicates the path of 
    	  gradient descent initialized at the origin, and the grey area is the 
    	  half-plane $(v_1^T b)(v^T x) = b\supind{1}x\supind{1} > 0$. Note 
    	  that the global minimum is the only critical point outside this 
    	  half-plane (Proposition~\ref{prop:s}). The gradient descent iterates 
    	  have increasing norm (Lemma~\ref{lem:monotone-weak}), lie 
    	  outside the 
    	  half-plane (Lemma~\ref{lemma:signs}), and converge to $\s$ 
    	  (Proposition~\ref{prop:converge}).    	  
  }
  \end{center}
\end{figure}

The gradient descent method begins at some initialization $x_{0} \in \R^d$
and generates iterates via
\begin{equation}
  x_{t + 1} =
  x_t - \eta \grad f(x_t)
  = (I - \eta A - \rho \eta \norm{x_t} I) x_t - \eta b,
  \label{eq:grad-iter}
\end{equation}
where $\eta$ is a fixed step size.
Recalling the definitions~\eqref{eq:R-def} and~\eqref{eq:Rc-def} of $R$ and $R_c$
as well as $\opnorm{A} = \beta$, throughout our
analysis we make the following assumptions.
\begin{assumption}
  \label{assu:step-size}
  The step size $\eta$ in~\eqref{eq:grad-iter} satisfies
  $0<\eta\leq \frac{1}{4(\beta+\rho R)}$.
\end{assumption}
\begin{assumption}
  \label{assu:init}
  The initialization of~\eqref{eq:grad-iter}
  satisfies $x_{0}=- r \frac{b}{\norm{b}}$,
  with $0\leq r\leq R_{c}$.
\end{assumption}
\noindent
To select a step size $\eta$ satisfying
Assumption~\ref{assu:step-size}, only a rough upper bound on 
$\opnorm{A}$ is necessary. One way to obtain such a bound (with high 
probability) is to apply a few power iterations on $A$. Alternatively, we may 
perform line search, as in Section~\ref{sec:linesearch}.

We begin our treatment of the convergence of gradient descent by establishing
that $\|x_t\|$ is monotonic and bounded (see Appendix~\ref{app:prel} for a
proof).

\begin{restatable}{lemma}{lemMonoWeak}
  \label{lem:monotone-weak}
  Let Assumptions~\ref{assu:step-size} and~\ref{assu:init} hold.  Then
  the iterates~\eqref{eq:grad-iter} of gradient descent
  satisfy $x_t^{\T}\grad f(x_t) \le 0$, the norms
  $\norm{x_t}$ are non-decreasing, and $\norm{x_t} \le R$.
\end{restatable}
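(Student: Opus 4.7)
I will prove the three conclusions via a single induction on $t$ whose invariant is the scalar inequality $\phi_t \defeq x_t^{\T} \nabla f(x_t) \le 0$; the other two assertions are then automatic consequences. Indeed, expanding the gradient step gives the identity
\[
  \|x_{t+1}\|^2 - \|x_t\|^2 = -2\eta \phi_t + \eta^2 \|\nabla f(x_t)\|^2,
\]
so $\phi_t \le 0$ forces the norms to be non-decreasing. Moreover, rewriting $\phi_t \le 0$ as $\rho \|x_t\|^3 \le -x_t^{\T} A x_t - b^{\T} x_t \le \beta \|x_t\|^2 + \|b\|\|x_t\|$ and dividing by $\|x_t\|$ (the case $x_t = 0$ is trivial) yields the quadratic inequality $\rho \|x_t\|^2 - \beta \|x_t\| - \|b\| \le 0$, whose largest root is exactly $R$ by definition~\eqref{eq:R-def}; hence $\phi_t \le 0$ also implies $\|x_t\|\le R$.

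For the base case, substituting $x_0 = -r\,b/\|b\|$ into the definition of $\phi_0$ gives
\[
  \phi_0 \;=\; r\bigl(\rho r^2 + r\,b^{\T} A b/\|b\|^2 - \|b\|\bigr).
\]
The parenthesized quadratic in $r$ is precisely the one whose positive root defines the Cauchy radius $R_c$ in~\eqref{eq:Rc-def}, so it is non-positive on $[0,R_c]$ and $\phi_0\le 0$ follows.

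The inductive step $\phi_t \le 0 \Rightarrow \phi_{t+1}\le 0$ is the technical core. My plan is to substitute $x_{t+1} = x_t - \eta \nabla f(x_t)$ into the definition $\nabla f(x) = Ax + b + \rho\|x\|x$ to obtain the representation
\[
  \nabla f(x_{t+1}) \;=\; \bigl[I - \eta(A + \rho\|x_{t+1}\|I)\bigr]\nabla f(x_t) + \rho\bigl(\|x_{t+1}\|-\|x_t\|\bigr) x_t,
\]
take its inner product with $x_{t+1}$, and simplify using $x_{t+1}^{\T}\nabla f(x_t) = \phi_t - \eta\|\nabla f(x_t)\|^2$ and $x_{t+1}^{\T} x_t = \|x_t\|^2 - \eta \phi_t$. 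This should display $\phi_{t+1}$ as $\phi_t$ multiplied by a positive factor, plus the clearly negative $-\eta\|\nabla f(x_t)\|^2$ term, plus $O(\eta^2)$ corrections, plus the cross term $-\eta x_{t+1}^{\T} A \nabla f(x_t)$ and the non-negative contribution $\rho\delta\|x_t\|^2$ where $\delta = \|x_{t+1}\|-\|x_t\|$.

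The main obstacle I anticipate is controlling the two potentially positive contributions—$\rho \delta \|x_t\|^2$ from the growing norm and the cross term with $A$—against the negative $-\eta\|\nabla f(x_t)\|^2$ and the inductive $\phi_t\le 0$. The key ingredients I would use are the a priori bound $\|\nabla f(x_t)\| \le 2\rho R^2$ (which follows from $\|x_t\|\le R$ together with the algebraic identity $\rho R^2 - \beta R = \|b\|$ implicit in~\eqref{eq:R-def}), and the step-size constraint $\eta \le 1/(4(\beta + \rho R))$. The factor $4$ in Assumption~\ref{assu:step-size} appears calibrated precisely to make this cancellation go through, and verifying that it does is the most delicate part of the argument.
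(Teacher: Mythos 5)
Your scaffolding is sound: the one-step norm identity, the derivation of $\|x_t\|\le R$ from $\phi_t\le 0$ via the quadratic inequality $\rho\|x_t\|^2-\beta\|x_t\|-\|b\|\le0$, the computation of $\phi_0$ and the recognition that its quadratic factor vanishes at $R_c$, and the expansion of $\phi_{t+1}$ are all correct and match the paper's plan. But the inductive step, which you rightly flag as the technical core, has a genuine gap: the cross term $-\eta\, x_t^{\T} A \nabla f(x_t)$ cannot be controlled by the ingredients you propose.

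The issue is structural, not a matter of tuning constants. With only Cauchy–Schwarz one gets $|x_t^{\T} A \nabla f(x_t)| \le \beta\|x_t\|\|\nabla f(x_t)\| \le \beta R\|\nabla f(x_t)\|$, which is \emph{linear} in $\|\nabla f(x_t)\|$. The compensating negative contributions are $-\eta\|\nabla f(x_t)\|^2$ (quadratic in the gradient norm) and multiples of $\phi_t$, but $|\phi_t| = |x_t^{\T}\nabla f(x_t)| \le R\|\nabla f(x_t)\|$ is itself forced to be small when the gradient is small. So in the regime where $\|\nabla f(x_t)\|$ is small but nonzero (e.g.\ $x_t$ near a saddle and nearly orthogonal to $\nabla f(x_t)$), a positive contribution of order $\eta\beta R\|\nabla f(x_t)\|$ could overwhelm the $O(\|\nabla f(x_t)\|^2)$ and $O(\phi_t)$ terms and make $\phi_{t+1}>0$. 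No choice of step-size constant and no a priori bound on $\|\nabla f(x_t)\|$ repairs this, because the mismatch is in the exponent.

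What the paper actually uses is a directional lower bound, not an absolute one: Lemma~\ref{lemma:x-transpose-A-nice} shows $x_t^{\T} A \nabla f(x_t) \ge \beta\, x_t^{\T}\nabla f(x_t) = \beta\phi_t$, so that $-\eta\, x_t^{\T} A\nabla f(x_t) \le -\eta\beta\phi_t$ is absorbed into the $\phi_t$ coefficient along with all the other terms. This bound is decidedly non-obvious: it does not follow from $\phi_t\le 0$ alone, but requires the entire history of the trajectory (monotone $\|x_\tau\|$ for $\tau\le t$) together with a combinatorial ordering argument (Lemma~\ref{lem:ordering}) showing that, when one rescales the eigencoordinates as $z_t^{(i)} = x_t^{(i)}/(-\eta b^{(i)})$, the set of coordinates in which $z_t^{(i)}$ is still increasing is always a prefix $\{1,\dots,i^*\}$ in eigenvalue order. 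That monotone-in-index structure is what makes the Abel-summation-style inequality $x_t^{\T}A\nabla f(x_t) \ge \lambda^{(i^*)} x_t^{\T}\nabla f(x_t) \ge \beta x_t^{\T}\nabla f(x_t)$ go through. Without a replacement for this ordering lemma, your proposed induction cannot close.
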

\noindent
This lemma is the key to our analysis throughout the paper.
The next lemma shows that 
$x_t$ and $b$ have opposite signs at all coordinates
in the eigenbasis of $A$.
\begin{restatable}{lemma}{lemBSXSign}
  \label{lemma:signs}
  Let Assumptions~\ref{assu:step-size} and~\ref{assu:init} hold.
  For all $t\geq0$ and $i\in\{1,...,d\}$
  \begin{equation*}
    x_{t}^{(i)}b^{(i)}\leq0, ~
    b^{(i)}\s^{(i)}\leq0, ~
    \text{and} ~ x_{t}^{(i)}\s^{(i)}\geq0.
  \end{equation*}
  Consequently, $x_t^{\T}b \le 0$ and $x_{t}^{\T}\s\geq0$ for every $t$, and
  $\s^{\T}b\le 0$.
\end{restatable}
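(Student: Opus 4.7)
The plan is to work coordinate-wise in the eigenbasis of $A$, where the iteration
\[
x_{t+1}\supind{i} = \bigl(1 - \eta \lambda\supind{i}(A) - \eta\rho\norm{x_t}\bigr) x_t\supind{i} - \eta\, b\supind{i}
\]
decouples across $i$, and then sum the coordinate inequalities to obtain the three inner-product consequences.

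First I would dispatch $b\supind{i} s\supind{i} \le 0$ directly from the optimality condition $\grad f(s) = A_s s + b = 0$ of \eqref{eqn:optimality}, which in the eigenbasis reads $b\supind{i} = -(\lambda\supind{i}(A) + \rho\norm{s})\, s\supind{i}$. Because $\rho \norm{s} \ge \gamma = -\lambda\supind{1}(A)$, the scalar $\lambda\supind{i}(A) + \rho \norm{s}$ is non-negative for every $i$, so $b\supind{i} s\supind{i} = -(\lambda\supind{i}(A) + \rho \norm{s})\,(s\supind{i})^2 \le 0$.

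Next, for $x_t\supind{i} b\supind{i} \le 0$ I would induct on $t$. The base case is immediate from Assumption~\ref{assu:init}, since $x_0\supind{i} = -r\, b\supind{i}/\norm{b}$ gives $x_0\supind{i} b\supind{i} = -r (b\supind{i})^2/\norm{b} \le 0$. For the step, write $c_t\supind{i} \defeq 1 - \eta\lambda\supind{i}(A) - \eta\rho\norm{x_t}$. Using $|\lambda\supind{i}(A)| \le \beta$, Lemma~\ref{lem:monotone-weak} (so $\norm{x_t} \le R$), and Assumption~\ref{assu:step-size}, I obtain $c_t\supind{i} \ge 1 - \eta(\beta + \rho R) \ge 3/4 > 0$. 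Multiplying the decoupled recursion by $b\supind{i}$ and invoking the inductive hypothesis yields $x_{t+1}\supind{i} b\supind{i} = c_t\supind{i}\, x_t\supind{i} b\supind{i} - \eta (b\supind{i})^2 \le 0$.

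Finally I would derive $x_t\supind{i} s\supind{i} \ge 0$ by splitting on whether $b\supind{i}$ vanishes. If $b\supind{i} \neq 0$, the identity $b\supind{i} = -(\lambda\supind{i}(A) + \rho \norm{s})\,s\supind{i}$ forces $s\supind{i}$ to have sign opposite to $b\supind{i}$, while the inductive bound $x_t\supind{i} b\supind{i} \le 0$ forces $x_t\supind{i}$ to have the same sign as $s\supind{i}$ (or be zero). If $b\supind{i} = 0$, then Assumption~\ref{assu:init} gives $x_0\supind{i} = 0$ and the decoupled iteration reduces to $x_{t+1}\supind{i} = c_t\supind{i}\, x_t\supind{i}$, so $x_t\supind{i} = 0$ for all $t$ and the product is trivially zero. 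Summing the three coordinate inequalities over $i$ delivers $x_t^{\T} b \le 0$, $x_t^{\T} s \ge 0$, and $s^{\T} b \le 0$. I do not anticipate any real obstacle; the only mild subtlety is the hard-case index where $b\supind{i} = 0$ while $s\supind{i}$ might be nonzero, and this is neutralized by the observation that the iteration preserves $x_t\supind{i} = 0$ for such coordinates.
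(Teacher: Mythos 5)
Your proof is correct and follows essentially the same route as the paper's: the optimality condition $b = -A_s s$ in the eigenbasis gives $b^{(i)} s^{(i)} \le 0$, induction on the decoupled recursion with a positive contraction factor gives $x_t^{(i)} b^{(i)} \le 0$, and the third inequality follows by combining the first two together with the observation that $b^{(i)} = 0$ forces $x_t^{(i)} = 0$. The only cosmetic difference is that the paper multiplies the two sign inequalities to get $x_t^{(i)} s^{(i)} [b^{(i)}]^2 \ge 0$ and then disposes of the $b^{(i)} = 0$ case, whereas you argue the two cases directly; both are equally valid.
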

\begin{proof}
  We first show that $x_{t}^{(i)}b^{(i)}\leq0$.
  Writing the gradient descent recursion in the eigenbasis of $A$,
  we have
  \begin{equation}
    x_{t}^{(i)}=\left(1-\eta\lambda^{(i)}(A)-\eta\rho\| x_{t-1}\| \right)x_{t-1}^{(i)}-\eta b^{(i)}.
    \label{eq:coord-rec}
  \end{equation}
  Assumption~\ref{assu:step-size} and Lemma \ref{lem:monotone-weak}
  imply $1-\eta\lambda^{(i)}(A)-\eta\rho\| x_{t-1}\|
  \ge 1 - \eta(\beta + \rho R) > 0$ for all $t, i$.
  Therefore, $x_{t}^{(i)}b^{(i)}\leq0$
  if $x_{0}^{(i)}b^{(i)}\leq0$;
  the initialization in Assumption~\ref{assu:init} guarantees this.
  To show $b^{(i)}\s^{(i)}\leq0$, we use the fact that
  $b=-\As\s$ to write
  \begin{equation*}
    b^{(i)}\s^{(i)}=-\left(\lambda^{(i)}(A)+\rho\| \s\| \right)[\s^{(i)}]^{2}\leq0
  \end{equation*}
  as $\lambda^{(i)}(A)+\rho\| \s\| \geq0$
  for every $i$ by the condition~\eqref{eqn:optimality}
  defining $\s$.

  Multiplying $x_{t}^{(i)}b^{(i)}\leq0$ and
  $b^{(i)}\s^{(i)}\leq0$ yields
  $x_{t}^{(i)}\s^{(i)}[b^{(i)}]^{2}\geq0$.
  The coordinate-wise update~\eqref{eq:coord-rec} and
  Assumption~\ref{assu:init} show that
  $b^{(i)}=0$ implies $x_{t}^{(i)}=0$ for every
  $t$, and therefore $x_{t}^{(i)}\s^{(i)}\geq0$.
\end{proof}

Lemmas~\ref{lem:monotone-weak},
\ref{lemma:signs}, and Proposition~\ref{prop:s} immediately lead to the 
following 
guarantee.

\begin{restatable}{proposition}{propConverge}
  \label{prop:converge}
  Let Assumptions~\ref{assu:step-size} and~\ref{assu:init} hold, and
  assume that $b\supind{1} \neq 0$. Then
  $x_t \to \s$ and $f(x_t)\downarrow f(\s)$ as $t \to \infty$.
\end{restatable}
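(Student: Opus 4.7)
The plan is to combine three ingredients: the standard descent lemma for smooth functions on a bounded domain, the sign structure from Lemma~\ref{lemma:signs}, and the critical-point uniqueness of Claim~\ref{claim:s}. First I would argue that gradient descent behaves like ordinary smooth descent on the bounded trajectory produced by Lemma~\ref{lem:monotone-weak}. Since $\grad^2 f(x) = A + \rho\norm{x}I + \rho xx^{\T}/\norm{x}$ satisfies $\opnorm{\grad^2 f(x)} \le \beta + 2\rho\norm{x} \le \beta + 2\rho R$ on the ball of radius $R$, the function $f$ is $L$-smooth there with $L \le \beta+2\rho R$. Assumption~\ref{assu:step-size} gives $\eta \le 1/(4(\beta+\rho R)) \le 1/L$, so the standard descent lemma yields
\begin{equation*}
  f(x_{t+1}) \le f(x_t) - \tfrac{\eta}{2}\norm{\grad f(x_t)}^2.
\end{equation*}
In particular $f(x_t)$ is non-increasing.

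Next I would use that $f$ is bounded below (by $f(s)$, or even by the explicit bound~\eqref{eq:fs-upper-bound}) to telescope and obtain $\sum_{t\geq 0}\norm{\grad f(x_t)}^2 < \infty$, hence $\norm{\grad f(x_t)} \to 0$. Since Lemma~\ref{lem:monotone-weak} confines the iterates to the compact set $\{x : \norm{x} \le R\}$, the sequence $\{x_t\}$ has at least one limit point, and by continuity of $\grad f$ every such limit point $x^{\star}$ satisfies $\grad f(x^{\star})=0$.

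I would then pin down the limit point using the sign conditions. Lemma~\ref{lemma:signs} gives $x_t^{(1)}b^{(1)} \le 0$ for all $t$, which passes to the limit: any limit point $x^{\star}$ satisfies $b^{(1)} x^{\star(1)} \le 0$. Because $b^{(1)} \neq 0$ by hypothesis, Claim~\ref{claim:s} identifies $s$ as the unique critical point with this sign property, so $x^{\star} = s$. Since every limit point of the bounded sequence equals $s$, the full sequence converges: $x_t \to s$. Continuity of $f$ and the monotonicity established above then give $f(x_t)\downarrow f(s)$.

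I do not expect a serious obstacle; the only point worth checking carefully is that $\eta$ is small enough for the descent lemma on the trajectory, which is immediate once one notes that $\norm{x_t}\le R$ bounds the effective smoothness constant by $\beta+2\rho R \le 4(\beta+\rho R) \le 1/\eta$. Everything else is routine once Lemmas~\ref{lem:monotone-weak}--\ref{lemma:signs} and Claim~\ref{claim:s} are in hand.
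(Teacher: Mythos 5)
Your proposal is correct and follows essentially the same route as the paper's own proof: bound the iterates via Lemma~\ref{lem:monotone-weak}, invoke the descent lemma on the compact sublevel set to get $\grad f(x_t)\to 0$, and then use Lemma~\ref{lemma:signs} together with Claim~\ref{claim:s} to identify $s$ as the unique limit point. The only cosmetic difference is that you cite the generic descent lemma with $\eta\le 1/L$ while the paper writes out the one-step inequality explicitly, but the content is the same.
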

\begin{proof}
  By Lemma~\ref{lem:monotone-weak}, the iterates satisfy $\|x_t\|\le R$ for all
  $t$. Since $\opnorm{\nabla^2 f(x)} \le \beta + 2 \rho \norm{x}$, the
  function $f$ is $\beta+2\rho R$-smooth on the set
  $\{x \in \R^d : \norm{x} \le R\}$ containing all the iterates
  $x_t$.  Therefore, by the definition of smoothness and the gradient step,
  \begin{equation*}
    f(x_{t+1}) \le f(x_t) - \eta \|\grad f(x_t)\|^2 + \frac{\eta^2}{2}(\beta+2\rho R)\|\grad f(x_t)\|^2 \le f(x_t) - \frac{\eta}{2} \|\grad f(x_t)\|^2 \ ,
  \end{equation*}
  where final inequality used Assumption~\ref{assu:step-size}
  that $\eta \le \frac{1}{4(\beta + \rho R)}$. Consequently, $f(x_t)$ is 
  decreasing and for every $t>0$,
  \begin{equation}\label{eq:square-grad-sum}
    \frac{\eta}{2} \sum_{\tau=0}^{t-1} \|\grad f(x_\tau)\|^2
    \le f(x_0)-f(x_t) \le f(x_0)-f(\s).
  \end{equation}
  
  Let $\s'$ be any limit point of
  the sequence $x_t$ (there must be at least one, as the sequence $x_t$ is 
  bounded). Inequality~\eqref{eq:square-grad-sum} implies 
  $\grad f(x_t) \to 0$ and 
  therefore $\grad f(\s') = 0$ by continuity. By
  Lemma~\ref{lemma:signs},
  $x_{t}^{(1)}b^{(1)}\leq0$ for every $t$, so
  ${\s'}\supind{1} b\supind{1} \le 0$. Proposition 
  \ref{prop:s} thus implies that $\s'$ is the unique global minimizer $\s$.
  We conclude that $\s$
  is the only limit point of the sequence $x_t$.
\end{proof}

\newcommand{\shat}{\hat{x}_\star}

To handle the case $b^{(1)} =0$, let $k\ge1$ be the first index for which 
$b^{(k)}\neq0$ (if no such $k$ exists then $b=0$ and $x_t=0$ for all $t$).
Consider a modified problem instance, with $b,\rho$ unchanged but $A$ 
replaced with $\tilde{A}=\beta\sum_{i=1}^{k-1} v_i 
v_i^T+\lambda\supind{i}(A)v_i v_i^T $, \ie we replace the $k-1$ smallest 
eigenvalues with $\beta\ge\lambda\supind{d}(A)$. Note that gradient 
descent 
produces the same iterates on the modified and original problems. 
 Additionally, note that Lemma~\ref{lem:monotone-weak} and 
 Proposition~\ref{prop:converge} apply to the modified problem, as the 
 inner 
 product between $b$ and the eigenvector of $\tilde{A}$ corresponding to 
 its smallest 
 eigenvalue is non-zero. Applying these results, we have 
 $\norm{x_t}\uparrow\norm{\shat}$, where $\shat$ is the unique solution 
 of the modified problem. Finally, we have 
 $\norm{\shat}\le 
 \norm{\s}$, since $\shat\neq \s$ only if $\rho\norm{\shat} \le 
 \gamma$~\cite[Sec.~6.1]{CartisGoTo11}. Thus, we obtain the following 
 lemma, to which we will refer throughout the sequel.
\begin{lemma}
  \label{lemma:monotone}
  Let Assumptions~\ref{assu:step-size} and~\ref{assu:init} hold.  For all
  $t \ge 0$, the iterates~\eqref{eq:grad-iter} of gradient descent satisfy
  $x_t^{\T}\grad f(x_t) \le 0$, the norms $\norm{x_t}$ are non-decreasing and
  satisfy $\norm{x_t} \le \norm{\s}$, and $f$ is
  ($\beta + 2 \rho \norm{\s}$)-smooth on a ball containing the iterates 
  $x_t$.
\end{lemma}
\noindent
Figure~\ref{fig:simple-gradients} provides a graphical representation of these
results, showing gradient descent's iterates on an instance of
problem~\eqref{eq:problem} exhibiting numerous stationary points.
\section{Non-asymptotic convergence rates}
\label{sec:mainresult}

Proposition~\ref{prop:converge} shows the convergence of gradient
descent for the cubic-regularized (non-convex) quadratic
problem~\eqref{eq:problem}. We now present stronger
non-asymptotic guarantees,
including a randomized scheme solving~\eqref{eq:problem} 
in all cases. We follow with simulations 
illustrating our theoretical results.

\subsection{Theoretical results}
\label{sub:statement}

Our primary result, Theorem~\ref{thm:main-result}, gives a convergence
rate for gradient descent in the case that $b\supind{1} \neq 0$. (Recall 
our convention~\eqref{eq:superscript-convention}, that parenthesized 
superscripts denote components in the eigenbasis of $A$). Further recalling 
that $\gamma = -\lambda\supind{1}(A)$, $\beta = \opnorm{A}$,
$\gap$ is the eigengap of $A$, we define the shorthand
\begin{equation*}
\gammaplus \defeq \max\{\gamma, 0\}~~\mbox{and}~~
\gap' \defeq \min\{\gap, \rs\}.
\end{equation*}
With this notation in hand, we state our result as follows.

\begin{theorem}
  \label{thm:main-result}
  Let Assumptions \ref{assu:step-size} and \ref{assu:init} hold, $b\supind{1} 
  \neq 0$,
  and $\eps > 0$. Then $f(x_t) \le f(\s) + \eps$ for all
    \begin{equation}
  t\geq T_{\eps} \defeq \frac{\tgrow(b^{(1)})+\tconv\left(\eps\right )}{\eta} 
  \begin{cases}
 \frac{1}{\rs -\gamma} & %
  	\frac{1}{\rs -\gamma}  \le \frac{10 \norm{\s}^2}{\eps} 
  	\vspace{5pt}\\ 
   \sqrt{ \frac{10 \norm{\s}^2}{\eps} \cdot \frac{1}{\gap'} } &  %
  	\frac{1}{\gap'} \le \frac{10 \norm{\s}^2}{\eps} \le \frac{1}{\rs -\gamma}  
  	\vspace{5pt}\\ 
   \frac{10 \norm{\s}^2}{\eps}  & %
  \textnormal{otherwise}
  \end{cases}
  \label{eq:main-bound}
  \end{equation}
  where
  \begin{equation*}
    \tgrow(b\supind{1}) = 
    6\log\left(1+\frac{\gammaplus^2}{4\rho|b\supind{1}|}\right)
    ~ \mbox{and} ~
    \tconv(\eps)=6\log\left(\frac{ (\beta + 2\rs)\norm{\s}^2 
   }{\eps}\right).
  \end{equation*}
\end{theorem}
\noindent
See Section~\ref{sec:proof-main-result} for a proof. 

Theorem~\ref{thm:main-result} shows that the rate of convergence changes 
from roughly $O(1/\eps)$ to
$O(\log(1/\eps))$ as $\eps$ decreases, with an intermediate
gap-dependent rate of $O(1/\sqrt{\eps})$. The terms
$\tgrow$ and $\tconv$ correspond to a period ($\tgrow$) in which
$\norm{x_t}$ grows exponentially in $t$ until reaching the basin of
attraction to the global minimum and a period ($\tconv$) of linear
convergence to $\s$. Exponential growth occurs only in non-convex problem 
instances, as $\tgrow = 0$ when the problem is convex.

The dependence of our result on $|b\supind{1}|$ (the magnitude of $b$ 
in along the direction of the smallest eigenvector of $A$) is unavoidable: 
if $b\supind{1} = 0$, then gradient descent always remains in a subspace
orthogonal to the smallest eigenvector of $A$, while $\s\supind{1}$ 
might
be non-zero; this is the ``hard case'' of non-convex quadratic
problems~\cite{ConnGoTo00, CartisGoTo11}. 
We use a small random perturbation to guarantee
$|b\supind{1}| \neq 0$ except with negligible probability, which yields
the following high probability guarantee, whose proof we provide in
Section~\ref{sec:proof-pert-main-result}.

\begin{theorem}
  \label{thm:pert-main-result}
  Let Assumptions~\ref{assu:step-size} and \ref{assu:init} hold, $\eps,\delta > 
  0$, and let $q$ be uniformly distributed on the unit sphere in $\R^d$.
  Let $\tilde{x}_t$ be generated by the gradient descent
  iteration~\eqref{eq:grad-iter} with $\tilde{b}=b+\sigma q$ replacing $b$, 
  where
  \begin{equation*}
    \sigma = \frac{\rho\eps^2}{200(\beta+2\rs)^2 \nss}\cdot 
    \sigbar~\mbox{with}~\sigbar\le1.
  \end{equation*}
  Then with probability at least $1 - \delta$, we have 
  $f(\tilde{x}_t) \le f(\s) + (1+\sigbar)\eps$ 
  for all
      \begin{equation}
  t\geq T_{\eps} \defeq 
  \frac{\tiltgrow(d,\delta,\sigbar)+\tiltconv\left(\eps\right 
  )}{(1+\sqrt{\sigbar})^{-2}\eta} 
  \begin{cases}
  \frac{1}{\rs -\gamma} & %
  \frac{1}{\rs -\gamma}  \le \frac{10 \norm{\s}^2}{\eps} 
  \vspace{5pt}\\ 
  \sqrt{ \frac{10 \norm{\s}^2}{\eps} \cdot \frac{1}{\gap'} } &  %
  \frac{1}{\gap'} \le \frac{10 \norm{\s}^2}{\eps} \le \frac{1-2\sqrt{\sigbar}/3}{\rs 
  -\gamma}  
  \vspace{5pt}\\ 
  \frac{10 \norm{\s}^2}{\eps}  & %
  \textnormal{otherwise}
  \end{cases}
  \label{eqn:always-true-convergence}
  \end{equation}
  where
  \begin{equation*}
    \tiltgrow(d,\delta,\sigbar)
    \defeq 
    6\log\left(1+\I_{\{\gamma>0\}}\frac{50\sqrt{d}}{\sigbar\delta}\right)
    ~ \mbox{and} ~
    \tiltconv(\eps) \defeq
    20\log\left(\frac{ (\beta + 2\rs)\norm{\s}^2 
    }{\eps}\right)\!\!.
  \end{equation*}
\end{theorem}

To facilitate later discussion, we define
$\Ls \defeq \beta + 2\rs$; then $f$ is $\Ls$-smooth on the Euclidean ball of
radius $\norm{\s}$. The bound~\eqref{eq:s-lower-R-def} implies $\rho R 
\le \beta + \rs$, and therefore the step size choice $\eta = \frac{1}{4(\beta 
+ \rho R)}$ satisfies $\frac{1}{\eta} \le 8\beta +4\rs \le 8\Ls$. Combining 
this upper bound with Theorem~\ref{thm:pert-main-result}, we  have the 
following corollary.
\begin{corollary}\label{cor:main-result}
  Let the conditions of Theorem~\ref{thm:pert-main-result} hold,
  $\eta = \frac{1}{4(\beta + \rho R)}$ and $\sigbar = 1$.
  Then with probability at least $1 - \delta$,
  we have $f(\tilde{x}_t) \le f(\s) + \eps$ for all
  \begin{equation*}
    t \ge \tilde{T}_{\eps} =
    O(1) \cdot
      \min \left\{
        \frac{\Ls}{\rs - \gamma},
        \frac{\Ls \nss}{\eps} \right\}
      \log\left[
        \left(1 + \I_{\{\gamma>0\}}\frac{ d}{\delta}\right)
        \frac{\Ls \nss}{ \eps} \right].
  \end{equation*}
\end{corollary}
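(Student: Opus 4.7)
The plan is to obtain Corollary~\ref{cor:main-result} as a direct simplification of the first bound $\tilde{T}_\eps$ from Theorem~\ref{thm:pert-main-result}, after substituting $\eta = 1/(4(\beta+\rho R))$ and $\sigbar = 1$. No new mathematical content is required; the argument is entirely bookkeeping on the prefactors together with a standard logarithm manipulation.

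First I would bound the step-size prefactor. Using the inequality $\beta + \rho R \le 2\beta + \rs \le 2\Ls$ recorded in the paragraph preceding the corollary, $1/\eta = 4(\beta+\rho R) \le 8\Ls$, so $(1+\sigbar)/\eta \le 16\Ls = O(\Ls)$. This accounts for the factor $\Ls$ that appears inside the $\min$ in the corollary. Next I would collapse the two logarithmic factors; with $\sigbar = 1$,
\begin{equation*}
\tiltgrow(d,\delta,1) + \tiltconv(\eps)
= 6\log\left(1+\I_{\{\gamma>0\}}\frac{3\sqrt{d}}{\delta}\right)
+ 14\log\left(\frac{\Ls\nss}{\eps}\right).
\end{equation*}
Bounding $\sqrt{d}\le d$, combining via $\log a + \log b = \log(ab)$, and using the monotonicity of $\log$, this sum is $O\!\left(\log\!\left[(1+\I_{\{\gamma>0\}}\,d/\delta)\,\Ls\nss/\eps\right]\right)$, precisely the logarithmic factor in the statement.

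Multiplying the three pieces---the $O(\Ls)$ prefactor, the logarithmic factor, and the $\min\{1/(\rs-\gamma),\,10\nss/\eps\}$ from the original bound, with the constant $10$ absorbed into $O(1)$---yields the claimed bound. The probability guarantee $1-\delta$ is inherited unchanged from Theorem~\ref{thm:pert-main-result}. There is no genuine obstacle: the only step that requires any care is the inequality $\beta+\rho R \le 2\Ls$, which is already recorded in the paragraph above the corollary.
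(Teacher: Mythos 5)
Your overall approach is correct and is exactly what the paper intends: the corollary is just Theorem~\ref{thm:pert-main-result} after substituting the chosen $\eta$ and $\sigbar$, bounding $1/\eta = 4(\beta+\rho R) \le 8\Ls$, and collapsing the two logarithms via $\sqrt{d}\le d$ and $\log a + \log b = \log(ab)$. Your log manipulation is valid (both log arguments are bounded below by an absolute constant, since $\eps \le \tfrac{1}{2}\Ls\nss$ by the standing assumption at the start of Section~4).

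However, there is one small but genuine gap you have not addressed: with $\sigbar = 1$, Theorem~\ref{thm:pert-main-result} guarantees $f(\tilde{x}_t) \le f(s) + (1+\sigbar)\eps = f(s) + 2\eps$, whereas the corollary claims the stronger $f(\tilde{x}_t) \le f(s) + \eps$. You cannot absorb an accuracy mismatch into an $O(1)$ on the iteration count. The fix is standard but must be stated: apply the theorem with the target accuracy $\eps/2$ in place of $\eps$ (and correspondingly set $\sigma = \tfrac{\rho(\eps/2)}{12(\beta+2\rs)}$), which yields $f(\tilde{x}_t)\le f(s)+\eps$. This rescaling replaces $\eps$ by $\eps/2$ throughout the iteration bound: the factor $10\nss/\eps$ doubles and $\tiltconv(\eps/2)$ picks up an additive $14\log 2$, both absorbed into the $O(1)$ and into the logarithm exactly as you absorb the other constants. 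Once that one sentence is added, the argument is complete and matches the paper's (unwritten) proof.
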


We conclude the presentation of our main results with a few brief remarks.

\begin{enumerate}[(i)]
\item Corollary~\ref{cor:main-result} highlights parallels between our
  guarantees and those for gradient descent on smooth convex
  functions~\cite{Nesterov04}. In our case, $\Ls/(\rs-\gamma) \ge 1$
  is a condition number, while $\Ls$ and $\norm{\s}$ bound the
  smoothness of $f$ and iterate radius $\sup_t\norm{x_t}$, respectively.
  We defer further comparison to
  Section~\ref{sec:discussion}.
\item We readily obtain relative accuracy guarantees by using the
  bound~\eqref{eq:fs-lower-bound}; setting
  $\eps = \rho\norm{\s}^3\eps'/12$, we have
  $f(\tilde{x}_t)-f(\s) \le -\eps'f(\s) = \eps'(f(0)-f(\s))$,
  or $f(\tilde{x}_t) \le (1 - \eps') f(\s)$, for any
  $t\ge \tilde{T}_\eps$, where $\tilde{T}_\eps$ is defined
  in~\eqref{eqn:always-true-convergence}.
\item Evaluating $\tilde{T}_\eps$ for given $A$, $b$ and $\rho$ is not
  straightforward, as $\norm{\s}$ is generally unknown. Using
  $\norm{\s}\le R$ gives an easily computable upper
  bound on $\tilde{T}_\eps$, and in Section~\ref{sec:trustregion}, we
  demonstrate how to apply our results when $\norm{\s}$ is unknown.
\end{enumerate}
\subsection{Illustration of results}\label{sec:experiments}

\begin{figure}
  \begin{center}
    \includegraphics[width=0.9\columnwidth]{./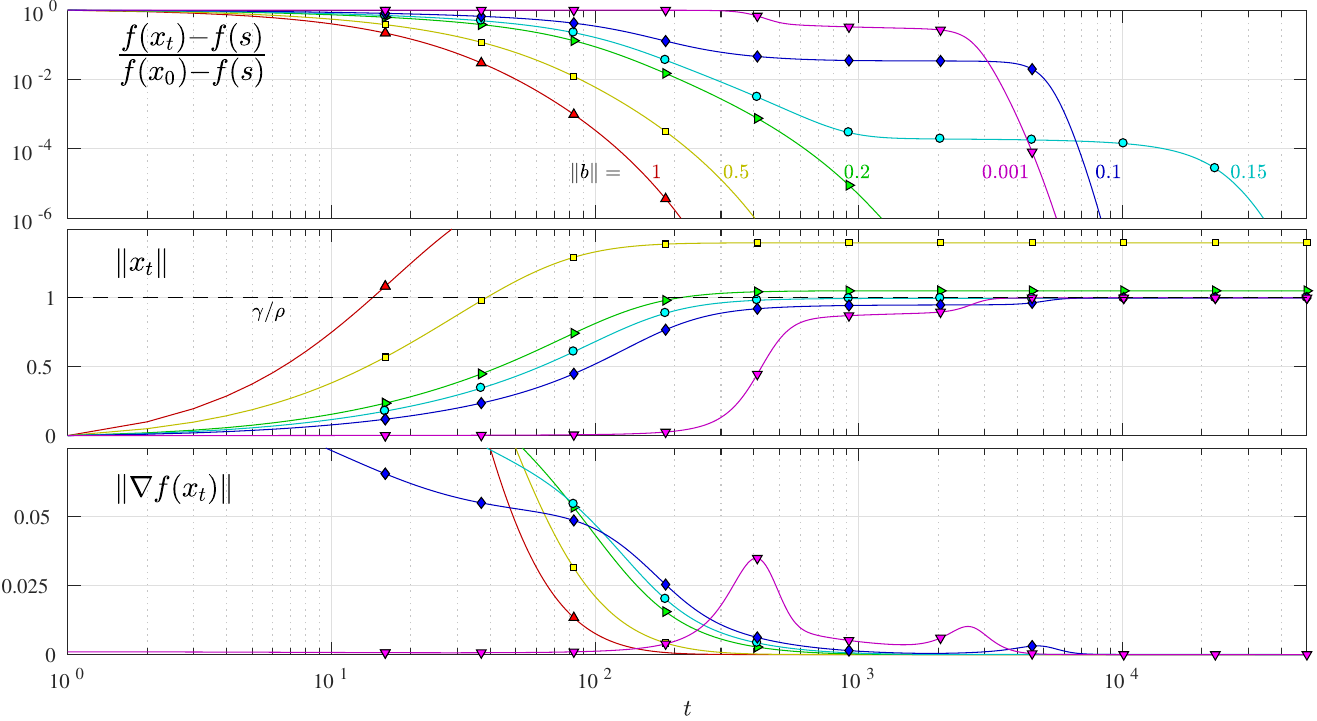}
    \caption{\label{fig:trajB} Trajectories of gradient descent with
      $\lambda^{(1)}(A) = -\gamma = -0.2$ and
      $\lambda^{(2)}(A),...,\lambda^{(d)}(A)$ equally spaced between $-0.18$
      and $\beta=1$, and different vectors $b$ proportional to $[0.01, 1, 1, 
      1, \ldots]$ in the eigenbasis of $A$.
      The rest of the parameters
      are $d=10^3$, $\eta = 0.1$, $\rho = 0.2$ and $x_0=0$. }
  \end{center}
\end{figure}

We present two experiments that investigate the behavior of gradient 
descent on problem~\eqref{eq:problem}.  For the first experiment, we examine 
the behavior of gradient descent on single problem instances, looking at 
convergence behavior as we vary the vector $b$ (to effect conditioning of the 
problem) by scaling its norm $\norm{b}$. The selected norm values 
$\norm{b}\in\{1, 0.5, 0.2, 0.15, 0.1, 0.001\}$ correspond to condition 
numbers $(\beta+\rs)/(-\gamma + \rs)\in \{7.6, 16, 120, 
5.5\cdot10^{3}, 2.9\cdot10^{4}, 3.8\cdot10^{6}\}$; the problem 
conditioning becomes worse as $\norm{b}$ decreases.   
Figure~\ref{fig:trajB} 
summarizes our results and describes the settings of the other 
parameters in the experiment.

The plots show two behaviors of gradient descent.  The problem is 
well-conditioned when $\|b\|\ge 0.2$, and in these cases gradient descent 
behaves as though the problem was strongly convex, with $x_t$ converging 
linearly to $\s$. For $\|b\| \le 0.15$ the problem 
becomes ill-conditioned and gradient descent stalls around saddle points. 
Indeed, 
the third plot of Figure~\ref{fig:trajB} shows that for the ill-conditioned 
problems, we have $\norm{\nabla f(x_t)}$ increasing over some iterations, 
which does not occur in convex quadratic 
problems.  The length of the stall does not depend only on the condition 
number; for $\norm{b} = 10^{-3}$ the stall is shorter than for 
$\norm{b} \in \{0.1, 0.15\}$. Instead, it appears to depend on the norm of 
the 
saddle point which causes it, which we observe from the value of 
$\norm{x_t}$ at the time of the stall; we see that the closer the norm is to 
$\gamma/\rho$, the longer the stall takes. This is explained by observing 
that $\hess f(x) \succeq (\rho\norm{x}-\gamma)I$, which means that every 
saddle point with norm close to $\gamma/\rho$ must have only 
small negative curvature, and therefore harder to escape (see also 
Lemma~\ref{lem:growth} in the sequel). Fortunately, as we see in 
Fig.~\ref{fig:trajB}, saddle points with large norm have near-optimal objective 
value---this is the intuition behind our proof of the sub-linear convergence 
rates. 

\begin{figure}
  \begin{center}
    \heavyplot{\includegraphics[width=0.9\columnwidth]%
      {./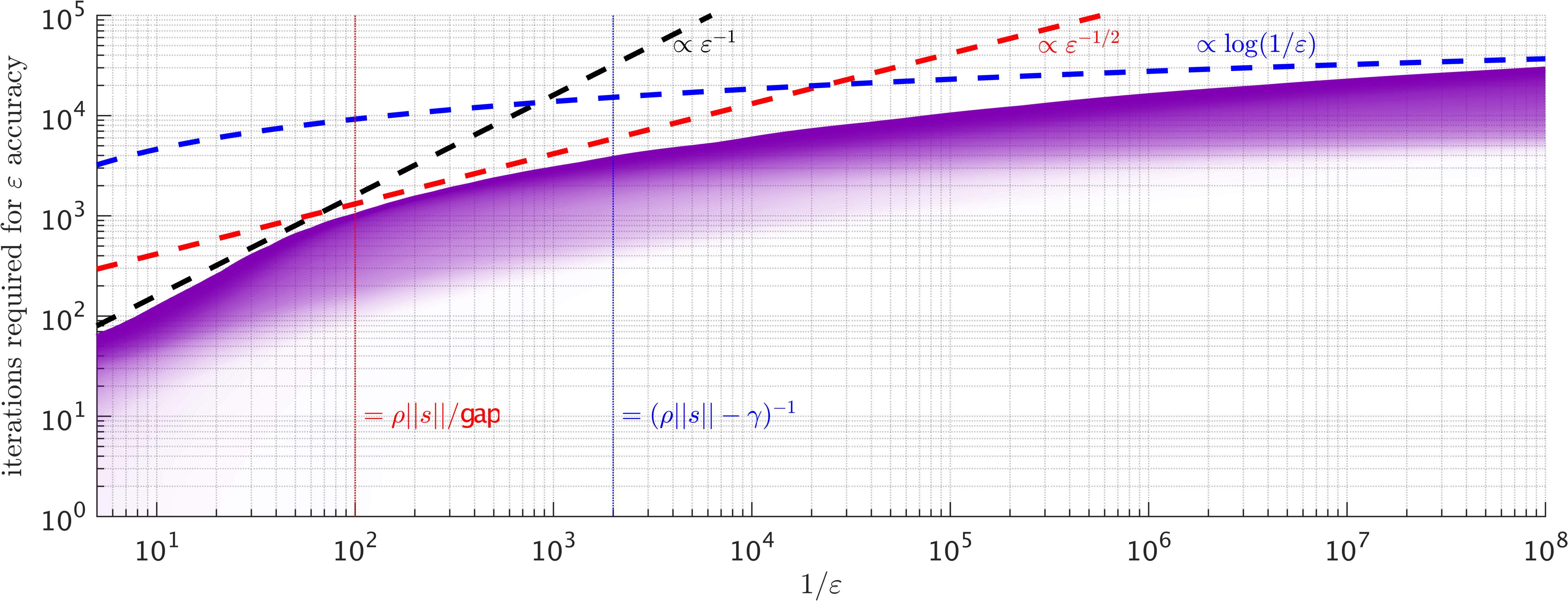}}
    \caption{\label{fig:ensemble} The purple curve is shaded according to the
      cdf of the number of iterations required to reach relative accuracy
      $\eps$, computed over $2{,}500$ random problem instances each with
      $d=10^4$, $\beta=\rho=1$, $\gamma=0.5$, $\gap = 5\cdot 10^{-3}$ and
      $\rs-\gamma = 5\cdot 10^{-4}$. We use $x_0=-R_c b/\|b\|$ and
      $\eta = 0.25$.  The black, red and blue dashed curves indicate the three
      convergence regimes Theorem~\ref{thm:main-result} identifies.}
  \end{center}
\end{figure}

In our second experiment, we test our rate guarantees by considering the
performance of gradient descent over an ensemble of random instances.  We
generate random instances with a fixed value of $\gamma$, $\beta$, $\rho$,
$\norm{\s}$ and $\gap$ as follows. We set
$A=\diag ([-\gamma; -\gamma+\gap; u])$ with $u$ uniformly random in
$[-\gamma+\gap,\beta]^{d - 2}$. We draw $\check{\s} = (A+\rs)^{-\zeta}\nu$,
where $\nu\sim\mathcal{N}(0;I)$ and $\log_2 \zeta$ is uniform on
$[-1,1]$. We then set $\s=(\norm{\s}/\norm{\check{\s}})\check{\s}$ and
$b = -(A+\rs)\s$, so that $\s$ is the global minimizer of problem instance
$(A,b,\rho)$. The choice of $\zeta$ ensures we observe large variety in the 
values of $\norm{x_t}$ at which gradient descent stalls,  
allowing us to find difficult 
instances for each value of $\eps$. In Figure
\ref{fig:ensemble} we depict the cumulative distribution of the number of
iterations required to find an $\eps$-relatively-accurate solution versus
$1/\eps$. The slopes in the plot agree with our upper
bounds, suggesting the sharpness of our theoretical results.
\section{Proofs of main results}
\label{sec:proofs}

In this section, we provide proofs of our main results,
Theorems~\ref{thm:main-result} and~\ref{thm:pert-main-result}.  A number of
the steps involve technical lemmas whose proofs we defer to
Appendix~\ref{app:proof}. In all lemma statements, we tacitly let
Assumptions~\ref{assu:step-size} and \ref{assu:init} hold, as in the main 
theorem
statements. Without loss of generality, we assume
$\eps \le \half \beta \norm{\s}^2 + \rho \norm{\s}^3$, as
$f$ is $\beta + 2 \rho \norm{\s}$ smooth on the set $\{x : \norm{x} \le 
\norm{\s}\}$ and therefore $f(x_0) \le f(\s) + \eps$ for any $\eps \ge \half \beta 
\norm{\s}^2 + \rho \norm{\s}^3$.

\subsection{Proof of Theorem~\ref{thm:main-result}}
\label{sec:proof-main-result}

We divide the proof of Theorem~\ref{thm:main-result} into two main steps: 
in Section~\ref{sec:linear-convergence-part} we prove the first case in the 
bound~\eqref{eq:main-bound} (linear convergence), and in 
Section~\ref{sec:epsilon-convergence-part} we prove the last two cases 
in~\eqref{eq:main-bound} (sublinear convergence).

\subsubsection{Linear convergence and exponential growth}
\label{sec:linear-convergence-part}

We first prove that $f(x_t) \le f(\s) + \eps$ for
$t \ge \frac{1}{\eta (\rho \norm{\s} -
	\gamma)}(\tgrow(b\supind{1}) + \tconv(\eps))$. We begin with two lemmas 
	that provide regimes in which
$x_t$ converges to the solution $\s$ linearly.

\begin{restatable}{lemma}{lemLinConv}
  \label{lem:lin-converge}
  For each $t > 0$, we have
  \begin{equation*}
    \norm{x_t - \s}^2 \le 
    \left(1-\eta \left[\rho\norm{x_t}-\left(\gamma - 
    \frac{\rs-\gamma}{2}\right)\right]\right)
    \norm{x_{t-1}-\s}^2
  \end{equation*}
\end{restatable}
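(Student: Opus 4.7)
The plan is to expand $\norm{x_t-s}^2 - \norm{x_{t-1}-s}^2 = -2\eta(x_{t-1}-s)^{\T}\nabla f(x_{t-1}) + \eta^2\norm{\nabla f(x_{t-1})}^2$ and to lower-bound the cross term in a way that isolates a positive-definite contribution coming from the cubic regularizer. Using $\nabla f(x) = A(x-s) + \rho(\norm{x}x-\norm{s}s)$ together with the algebraic identity
\begin{equation*}
\norm{x}x - \norm{s}s = \tfrac{1}{2}(\norm{x}+\norm{s})(x-s) + \tfrac{1}{2}(\norm{x}-\norm{s})(x+s),
\end{equation*}
and $(x-s)^{\T}(x+s) = \norm{x}^2 - \norm{s}^2$, we obtain the clean expression $(x-s)^{\T}(\norm{x}x - \norm{s}s) = \tfrac{1}{2}(\norm{x}+\norm{s})[\,\norm{x-s}^2 + (\norm{s}-\norm{x})^2\,]$. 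Combined with $(x-s)^{\T}A(x-s)\ge-\gamma\norm{x-s}^2$, this yields
\begin{equation*}
(x_{t-1}-s)^{\T}\nabla f(x_{t-1}) \ge \Bigl[\tfrac{\rho(\norm{x_{t-1}}+\norm{s})}{2}-\gamma\Bigr]\norm{x_{t-1}-s}^2 + \tfrac{\rho(\norm{x_{t-1}}+\norm{s})}{2}(\norm{s}-\norm{x_{t-1}})^2,
\end{equation*}
where the second summand will serve as slack for absorbing higher-order corrections.

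Substituting and matching against the target factor $1-\eta[\rho\norm{x_t}-\gamma+(\rs-\gamma)/2]$, the claim reduces to verifying
\begin{equation*}
\eta\rho(\norm{x_t}-\norm{x_{t-1}})\norm{x_{t-1}-s}^2 + \eta^2\norm{\nabla f(x_{t-1})}^2 \le \tfrac{\eta(\rs-\gamma)}{2}\norm{x_{t-1}-s}^2 + \eta\rho(\norm{x_{t-1}}+\norm{s})(\norm{s}-\norm{x_{t-1}})^2.
\end{equation*}
For the gradient norm I would use the alternative decomposition $\nabla f(x) = A_s(x-s) - \rho(\norm{s}-\norm{x})x$ together with $\opnorm{A_s}=\beta+\rs$ and $\norm{x_{t-1}}\le\norm{s}$ (Lemma~\ref{lem:monotone-weak}) to obtain $\norm{\nabla f(x_{t-1})}^2 \le 2(\beta+\rs)^2\norm{x_{t-1}-s}^2 + 2\rs^2(\norm{s}-\norm{x_{t-1}})^2$; the $(\norm{s}-\norm{x_{t-1}})^2$ piece is then absorbed by the slack term using $2\eta\rs\le 1$ (a consequence of Assumption~\ref{assu:step-size} since $\rs\le\rho R$). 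For the iterate-advance ``tax'', I would use the identity $\norm{x_t}^2-\norm{x_{t-1}}^2 = -2\eta x_{t-1}^{\T}\nabla f(x_{t-1}) + \eta^2\norm{\nabla f(x_{t-1})}^2 \ge \eta^2\norm{\nabla f(x_{t-1})}^2$, which follows from $x_{t-1}^{\T}\nabla f(x_{t-1})\le 0$ in Lemma~\ref{lem:monotone-weak}; combined with $\norm{x_t}+\norm{x_{t-1}}\le 2\norm{s}$, this yields the tight bound $\eta\norm{\nabla f(x_{t-1})} \le \sqrt{2\norm{s}(\norm{x_t}-\norm{x_{t-1}})}$, directly tying the gradient magnitude to the advance $\norm{x_t}-\norm{x_{t-1}}$.

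The hard part will be the quantitative balancing in the last step: the step-size bound $\eta\le 1/(4(\beta+\rho R))$ (which gives $\eta(\beta+\rs)\le 1/4$) must simultaneously tame the $2\eta^2(\beta+\rs)^2\norm{x_{t-1}-s}^2$ correction and the cost of replacing $\rho\norm{x_{t-1}}$ by $\rho\norm{x_t}$ in the contraction factor. The positive slack $\rho(\norm{x_{t-1}}+\norm{s})(\norm{s}-\norm{x_{t-1}})^2$ furnished by the cubic regularizer is what makes this possible, and it is precisely this slack that explains why the lemma may use the forward iterate $\norm{x_t}$ in place of $\norm{x_{t-1}}$---a feature that is essential for telescoping the one-step contraction into the linear convergence rate claimed in Theorem~\ref{thm:main-result}.
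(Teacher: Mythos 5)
Your opening moves match the paper: you expand $\norm{x_t-s}^2-\norm{x_{t-1}-s}^2$, and your algebraic identity for $(x-s)^{\T}(\norm{x}x-\norm{s}s)$ produces exactly the same cross-term decomposition that the paper derives (display~\eqref{eqn:cross-term-lyapunov}), with the same slack term $\tfrac{\rho}{2}(\norm{s}-\norm{x})^2(\norm{x}+\norm{s})$. The divergence --- and the gap --- occurs at the point where you pass to the scalar lower bound $(x-s)^{\T}A(x-s)\ge-\gamma\norm{x-s}^2$ \emph{before} combining with the $\eta^2\norm{\nabla f}^2$ term. The paper keeps $A_s$ in matrix form in both the cross term and the squared-gradient term, so that the matrix
$I-2\eta A_s + 2\eta^2 A_s^2 = I-2\eta A_s(I-\eta A_s)$
is diagonalized jointly, and for each eigenvalue $\lambda$ of $A_s$ with $\eta\lambda\le 1/4$ one has $1-2\eta\lambda+2\eta^2\lambda^2\le 1-\tfrac{3}{2}\eta\lambda$, hence $\preceq(1-\tfrac{3}{2}\eta\lambda_{\min}(A_s))I$. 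Your decoupled scalar treatment instead yields $1-2\eta\lambda_{\min}(A_s)+2\eta^2\lambda_{\max}(A_s)^2$, and the excess over the target is of order $\eta^2(\beta+\rs)^2\norm{x_{t-1}-s}^2\sim\eta(\beta+\rs)\norm{x_{t-1}-s}^2$ after using $\eta(\beta+\rs)\le 1/4$. This term is \emph{not} absorbable by your available slack: the only quantities you have to spend against it are $\tfrac{\eta(\rs-\gamma)}{2}\norm{x_{t-1}-s}^2$ (which vanishes as the problem becomes ill-conditioned, $\rs-\gamma\to 0$, while $\beta+\rs$ stays bounded away from zero) and $\eta\rho(\norm{x_{t-1}}+\norm{s})(\norm{s}-\norm{x_{t-1}})^2$ (which is of a different homogeneity and vanishes when $\norm{x_{t-1}}\to\norm{s}$). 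The ``hard balancing'' you defer is therefore not merely hard; it fails, e.g., with $\gamma=0$, $\rs-\gamma$ tiny, $\beta$ large and $\norm{x_{t-1}-s}\approx\norm{s}$.

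Your second device --- the bound $\eta\norm{\nabla f(x_{t-1})}\le\sqrt{2\norm{s}(\norm{x_t}-\norm{x_{t-1}})}$ from $x_{t-1}^{\T}\nabla f(x_{t-1})\le 0$ --- is correct in itself, but it only serves the secondary task of replacing $\norm{x_{t-1}}$ by $\norm{x_t}$ in the contraction factor; it does not touch the $\eta^2(\beta+\rs)^2\norm{x_{t-1}-s}^2$ residual, which is already fatal with $\norm{x_{t-1}}$ in place. To fix the proof, you should not pass to $\lambda_{\min}(A)$ in the cross term: retain $(x_{t-1}-s)^{\T}A_s(x_{t-1}-s)$, bound $\norm{\nabla f(x_{t-1})}^2\le 2(x_{t-1}-s)^{\T}A_s^2(x_{t-1}-s)+2\rho^2(\norm{s}-\norm{x_{t-1}})^2\norm{x_{t-1}}^2$, and only then invoke $2\eta A_s(I-\eta A_s)\succeq\tfrac{3}{2}\eta A_s\succeq\tfrac{3}{2}\eta(\rs-\gamma)I$ under Assumption~\ref{assu:step-size}, which is the paper's route.
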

\noindent
See Appendix~\ref{sec:proof-lin-converge} for a proof of this lemma.

For non-convex problem instances (those with $\gamma > 0$), the above
recursion is a contraction (implying linear convergence of $x_t$ to $\s$) only 
when
$\rho\norm{x_t}$ is larger than $\gamma - \half(\rs-\gamma)$. 
Using the fact that $\|x_t\|$
is non-decreasing (Lemma~\ref{lemma:monotone}),
Lemma~\ref{lem:lin-converge} immediately implies the following result.
\begin{lemma}
  \label{lem:really-lin-converge}
  If $\rho\| x_{t}\| \geq\gamma-\frac{1}{2}\left(\rs-\gamma\right)+\mu$
  for some $t\geq0$, then for all $\tau\geq0$,
  \begin{equation*}
    \| x_{t+\tau}-\s\|^2
    \leq\left(1-\eta\mu\right)^{\tau}\| 
    x_{t}-\s\|^2\leq2\norm{\s}^2e^{-\eta\mu\tau}.
  \end{equation*}
\end{lemma}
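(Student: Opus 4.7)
The plan is to iterate the one-step contraction from Lemma~\ref{lem:lin-converge}, relying on the monotonicity of $\norm{x_t}$ to ensure the contraction factor stays below $1-\eta\delta$ for all subsequent steps.

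First I would observe that, by Corollary~\ref{corollary:monotone}, the sequence $\norm{x_\tau}$ is non-decreasing, so the hypothesis $\rho\norm{x_t} \ge \gamma - \half(\rs-\gamma) + \delta$ propagates forward: for every $\tau' \ge t$ we have $\rho\norm{x_{\tau'}} \ge \gamma - \half(\rs-\gamma) + \delta$. Substituting into the recursion of Lemma~\ref{lem:lin-converge} (applied at index $\tau'+1$) then yields
\begin{equation*}
\norm{x_{\tau'+1} - s}^2 \le \bigl(1 - \eta[\rho\norm{x_{\tau'+1}} - \gamma + \tfrac{1}{2}(\rs-\gamma)]\bigr)\norm{x_{\tau'} - s}^2 \le (1-\eta\delta)\,\norm{x_{\tau'} - s}^2.
\end{equation*}
Unrolling this one-step bound from $\tau' = t$ to $\tau' = t + \tau - 1$ gives the first inequality $\norm{x_{t+\tau} - s}^2 \le (1-\eta\delta)^\tau \norm{x_t - s}^2$.

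For the second inequality, I would combine the standard estimate $(1-\eta\delta)^\tau \le e^{-\eta\delta\tau}$ with a bound on the initial distance. By Corollary~\ref{corollary:monotone}, $\norm{x_t} \le \norm{s}$, and by Lemma~\ref{lemma:signs}, $x_t^\top s \ge 0$. Hence $\norm{x_t - s}^2 = \norm{x_t}^2 + \norm{s}^2 - 2 x_t^\top s \le \norm{x_t}^2 + \norm{s}^2 \le 2\norm{s}^2$, which completes the argument.

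I do not expect a significant obstacle here: the only mild subtlety is remembering that the contraction factor in Lemma~\ref{lem:lin-converge} depends on $\norm{x_t}$ (the later iterate), so one has to invoke monotonicity to transfer the hypothesis at step $t$ to all later steps. Everything else is routine iteration and the use of $x_t^\top s \ge 0$ to replace the naive bound $4\norm{s}^2$ with the sharper $2\norm{s}^2$ that the lemma claims.
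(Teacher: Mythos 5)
Your proof is correct and follows exactly the paper's approach: iterate the one-step contraction of Lemma~\ref{lem:lin-converge}, using the monotonicity of $\norm{x_\tau}$ (Corollary~\ref{corollary:monotone}) to propagate the hypothesis and keep the contraction factor at most $1-\eta\delta$, then bound $\norm{x_t-s}^2 \le \norm{x_t}^2 + \norm{s}^2 \le 2\norm{s}^2$ via $x_t^\top s \ge 0$ (Lemma~\ref{lemma:signs}) and $\norm{x_t}\le\norm{s}$, finishing with $1-\eta\delta \le e^{-\eta\delta}$. The ``mild subtlety'' you flag --- that the contraction factor in Lemma~\ref{lem:lin-converge} is governed by the norm of the \emph{later} iterate --- is indeed the only thing to notice, and you handled it correctly.
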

\begin{proof}
  Lemma~\ref{lem:lin-converge} implies that
  $\norm{x_{t + \tau} - \s}^2 \le (1 - \eta \mu) \norm{x_{t + \tau - 1} -
    \s}^2$
  for all $\tau > 1$. Using that
  $\norm{x_t - \s}^2 \le \norm{x_t}^2 + \norm{\s}^2 \le 2\norm{\s}^2$
  by Lemma~\ref{lemma:signs}, Lemma~\ref{lemma:monotone}, and
  $1 + \alpha \le e^\alpha$ for all $\alpha$ gives the result.
\end{proof}

It remains to understand whether the gradient descent iterations satisfy the 
condition $\rho\| x_{t}\| \ge 
\gamma-\frac{1}{2}\left(\rs-\gamma\right)+\mu$. Fortunately, as long 
as $\rho\| x_{t}\| $
is below $\gamma-\smallval$, $|x_{t}^{\left(1\right)}|$ grows faster than 
$(1 + 
\eta \smallval)^t$:
\begin{restatable}{lemma}{lemGrowth}
  \label{lem:growth}
  Let $\smallval>0$. Then $\rho\| x_{t}\| \geq\gamma-\smallval$ for all
  $t\geq\frac{2}{\eta\smallval}\log(1+\frac{\gammaplus^2}{4\rho|b^{\left(1\right)}|})$.
\end{restatable}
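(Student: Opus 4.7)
The plan is to analyze the coordinate recursion~\eqref{eq:coord-rec} specialized to $i=1$, where $\lambda^{(1)}(A) = -\gamma$. First I would dispose of the trivial cases: if $\gamma \le 0$ or $\gamma \le \delta$ then $\gamma - \delta \le 0 \le \rho\|x_t\|$ holds automatically, so I may assume $0 < \delta < \gamma$, in which regime $\gammaplus = \gamma$. For the bottom coordinate, the recursion reads
\[
x_t^{(1)} = \bigl(1 + \eta\gamma - \eta\rho\|x_{t-1}\|\bigr)\,x_{t-1}^{(1)} - \eta b^{(1)}.
\]
By Lemma~\ref{lemma:signs}, $x_{t-1}^{(1)}$ and $b^{(1)}$ have opposite signs, and Assumption~\ref{assu:step-size} together with Lemma~\ref{lem:monotone-weak} keep the multiplier $1+\eta\gamma-\eta\rho\|x_{t-1}\|$ strictly positive. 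Taking absolute values therefore yields the clean scalar recursion
\[
|x_t^{(1)}| = \bigl(1+\eta\gamma-\eta\rho\|x_{t-1}\|\bigr)\,|x_{t-1}^{(1)}| + \eta|b^{(1)}|.
\]

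Next I would argue by contradiction. Suppose $\rho\|x_\tau\| < \gamma - \delta$ for every $\tau \le t$. Then the multiplier is at least $1+\eta\delta$ at each step, so iterating from $|x_0^{(1)}| \ge 0$ gives the geometric-series lower bound
\[
|x_t^{(1)}| \;\ge\; \eta|b^{(1)}|\sum_{k=0}^{t-1}(1+\eta\delta)^k \;=\; \frac{|b^{(1)}|}{\delta}\bigl[(1+\eta\delta)^t - 1\bigr].
\]
Combining this with $\rho\|x_t\| \ge \rho|x_t^{(1)}|$ and the standing hypothesis $\rho\|x_t\| < \gamma-\delta$ forces $(1+\eta\delta)^t < 1 + (\gamma-\delta)\delta/(\rho|b^{(1)}|)$, i.e.
\[
t < \frac{\log\bigl(1 + (\gamma-\delta)\delta/(\rho|b^{(1)}|)\bigr)}{\log(1+\eta\delta)}.
\]

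To finish, I would invoke two elementary estimates: the concavity bound $\log(1+\eta\delta) \ge (\log 2)\,\eta\delta \ge \eta\delta/2$, valid because Assumption~\ref{assu:step-size} gives $\eta\delta \le \eta\beta \le 1/4 \le 1$; and the AM--GM bound $(\gamma-\delta)\delta \le \gamma^2/4 = \gammaplus^2/4$. Together these show that the hypothesis $\rho\|x_\tau\| < \gamma - \delta$ for all $\tau \le t$ forces $t < \tfrac{2}{\eta\delta}\log\bigl(1 + \gammaplus^2/(4\rho|b^{(1)}|)\bigr)$, whose contrapositive is exactly the lemma. The conclusion then extends to every subsequent time by monotonicity of $\|x_t\|$ from Lemma~\ref{lem:monotone-weak}. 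The main subtle point is the sign-and-positivity check that justifies passing from the signed recursion to the absolute-value recursion; without Lemma~\ref{lemma:signs} aligning $-\eta b^{(1)}$ with $x_{t-1}^{(1)}$, the additive driving term could cancel rather than reinforce the geometric growth.
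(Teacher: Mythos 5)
Your proposal is correct and follows essentially the same route as the paper: both analyze the recursion in the first eigencoordinate, use Lemma~\ref{lemma:signs} to turn it into a clean scalar recursion with $1+\eta\delta$ multiplier, lower-bound $|x_t^{(1)}|$ by a geometric sum, and close with the same two elementary estimates $\log(1+\eta\delta)\ge\eta\delta/2$ and $(\gamma-\delta)\delta\le\gammaplus^2/4$ plus monotonicity of $\|x_t\|$. The paper phrases the argument via a stopping time $t^*$ (invoking Proposition~\ref{prop:converge} to ensure it is finite), whereas you argue directly by contrapositive, which avoids even needing to know the iterates converge; this is a harmless repackaging of the same bound.
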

\noindent
See Appendix~\ref{sec:proof-growth} for a proof of this lemma.

We now combine the lemmas to give the linear convergence regime of
Theorem~\ref{thm:main-result}.  Applying Lemma~\ref{lem:growth} with
$\smallval=\frac{1}{3}(\rho\norm{\s} -\gamma)$ yields 
$\rho\norm{x_{t}} \geq\gamma-\frac{1}{3}\left(\rho\norm{\s} -\gamma\right)$ 
for
\begin{equation*}
  t\geq T_{1}\triangleq\frac{6}{\eta\left(\rho\norm{\s}
      -\gamma\right)}\log\left(1+\frac{\gammaplus^2}{4\rho|b^{\left(1\right)}|}\right)
  \IndNC
  = \frac{1}{\eta (\rho \norm{\s} - \gamma)} \tgrow(b\supind{1}).
\end{equation*}
Therefore, by Lemma~\ref{lem:really-lin-converge} with $\mu = 
\half(\rs-\gamma)-\smallval=
\frac{1}{6}(\rs-\gamma)$, for any $t$ we have
\begin{equation}
  \label{eqn:eventual-linear-convergence}
  \norm{x_{T_{1}+t} - \s}^{2}
  \leq2\norm{\s}^{2} \exp\Big(-\frac{1}{6}\eta\left(\rho\norm{\s} 
  -\gamma\right)t\Big).
\end{equation}
As a consequence, for all $t \ge 0$ we may use the
$(\beta + 2 \rho \norm{\s})$-smoothness of $f$ and the fact that
$\norm{x_t}\le \norm{\s}$ (by Lemma~\ref{lemma:monotone}) to
obtain
\begin{equation*}
  f\left(x_{t}\right)-f\left(\s\right)\leq\frac{\beta+2\rho\norm{\s}
  }{2}\norm{x_{t}-\s}^{2}
  \le (\beta + 2 \rho \norm{\s}) \norm{\s}^2
  e^{-\frac{1}{6} \eta (\rho \norm{\s} - \gamma) (t - T_1)}
\end{equation*}
where we have used that $\nabla f(\s) = 0$ and the
bound~\eqref{eqn:eventual-linear-convergence}.  Therefore, if we set
\begin{equation*}
  T_2 \defeq
  \frac{6}{\eta\left(\rho\norm{\s} -\gamma\right)}
  \log \frac{(\beta + 2 \rho \norm{\s}) \norm{\s}^2}{\eps}
  = \frac{1}{\eta (\rho \norm{\s} - \gamma)} \tconv(\eps),
\end{equation*}
then $t \ge T_1 + T_2 = \frac{1}{\eta (\rho \norm{\s} - 
\gamma)}(\tgrow(b\supind{1}) + 
\tconv(\eps))$ implies
  $f(x_t) - f(\s) \le \eps$.

\subsubsection{Sublinear convergence and convergence in subspaces}
\label{sec:epsilon-convergence-part}

We now turn to the sublinear convergence regime in
Theorem~\ref{thm:main-result},
which applies when the quantity $\rs-\gamma$ is sufficiently small.
\begin{equation}
  \label{eqn:assumption-bad-condition}
  \rho \norm{\s} - \gamma \le \frac{\eps}{10 \norm{\s}^2}~.
\end{equation}
Note that if~\eqref{eqn:assumption-bad-condition} fails to hold,
then~\eqref{eq:main-bound} is dominated by the $(\rs-\gamma)^{-1}$  term. 
Therefore, to complete the proof of Theorem~\ref{thm:main-result} it suffices 
to show that if~\eqref{eqn:assumption-bad-condition} holds, then $f(x_t) \le 
f(\s) + \eps$ whenever
\begin{equation}
  \label{eqn:sublinear-main}
  t \ge T^{\rm sub}_\eps \defeq \frac{\tgrow(b\supind{1}) + \tconv(\eps)}{\eta}
  \min\left\{ \frac{10 \norm{\s}^2}{\eps},
    \sqrt{\frac{10 \norm{\s}^2}{(\min\{\gap , \rs\}) \eps}} \right\}.
\end{equation}

Roughly, our proof of the result~\eqref{eqn:sublinear-main} proceeds as
follows: when $\rs - \gamma$ is small, the function $f$ is very smooth along
eigenvectors with eigenvalues close to $-\gamma = \lambda\supind{1}(A)$. It is
therefore sufficient to show convergence in the complementary subspace, which
occurs at a linear rate. Appropriately choosing the gap between the
eigenvalues in the complementary subspace and $\lambda\supind{1}(A)$ to trade
between convergence rate and function smoothness yields the
rates~\eqref{eqn:sublinear-main}.

The following analogs of Lemmas~\ref{lem:lin-converge}
and~\ref{lem:really-lin-converge}
establish subspace convergence.
\begin{restatable}{lemma}{lemProjLinConverge}
  \label{lem:proj-lin-converge}
  Let $\Pi$ be any projection matrix satisfying $\Pi A = A \Pi$ for which
  $\Pi \As\succeq\smallval\Pi$ for some $\smallval>0$.
  For all $t>0$,
  \begin{align*}
    &\norms{ \Pi \As^{1/2}(x_{t}-\s)}^2
     \leq\left(1-\eta\smallval\right)
      \norms{ \Pi \As^{1/2}(x_{t-1}-\s)}^2\\
    & \qquad+\sqrt{8}\eta\rho\left(\norm{ \s} -\norm{ x_{t-1}}
      \right)\left[\rho\left(\norm{ \s} -\norm{ x_{t-1}} \right)\norm{
      x_{t-1}}^2+\opnorm{ (I -\Pi)\As} \norm{ \s}^2\right].
  \end{align*}
\end{restatable}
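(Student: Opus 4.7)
The plan is to mirror the proof of Lemma~\ref{lem:lin-converge} while working in the $\Pi A_s^{1/2}$-weighted norm. The essential algebraic ingredient is that $\Pi A = A\Pi$ implies $\Pi A_s = A_s \Pi$, hence $\Pi$ commutes with $A_s^{1/2}$ (as $A_s \succeq 0$) and with $I - \eta A_s$.

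First I would use the identity $\nabla f(x) = A_s(x - s) - \rho(\|s\| - \|x\|)\,x$, derived in Section~\ref{sec:prel} from $b = -A_s s$, to rewrite the gradient descent step as
\begin{equation*}
x_t - s \;=\; (I - \eta A_s)(x_{t-1} - s) + \eta\rho(\|s\| - \|x_{t-1}\|)\,x_{t-1}.
\end{equation*}
Applying $\Pi A_s^{1/2}$ on the left and invoking commutativity yields the recursion $u_t = (I - \eta A_s)u_{t-1} + \epsilon\, v_{t-1}$, where $u_\tau := \Pi A_s^{1/2}(x_\tau - s)$, $v_\tau := \Pi A_s^{1/2} x_\tau$, and $\epsilon := \eta\rho(\|s\| - \|x_{t-1}\|) \ge 0$. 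Squaring produces the sum of a contractive term $\|(I - \eta A_s)u_{t-1}\|^2$, a cross term $2\epsilon\langle (I - \eta A_s)u_{t-1}, v_{t-1}\rangle$, and a pure error $\epsilon^2 \|v_{t-1}\|^2$.

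For the contractive term, the assumption $\Pi A_s \succeq \delta\Pi$ combined with the step-size bound $\eta(\beta + \rs) \le 1/4$ (which follows from Assumption~\ref{assu:step-size} using $\rs \le \rho R$ from~\eqref{eq:R-def}) guarantees that $I - \eta A_s$ has spectrum in $[3/4, 1 - \eta\delta]$ on the range of $\Pi$. Hence $\|(I - \eta A_s)u_{t-1}\|^2 \le (1 - \eta\delta)^2\|u_{t-1}\|^2 \le (1 - \eta\delta)\|u_{t-1}\|^2$, matching the leading summand on the right-hand side of the statement. For the pure quadratic error, bounding $\|v_{t-1}\|^2 \le \opnorm{A_s}\|x_{t-1}\|^2 \le (\beta + \rs)\|x_{t-1}\|^2$ and absorbing one factor of $\eta(\beta + \rs) \le 1/4$ yields a contribution of order $\eta\rho^2(\|s\| - \|x_{t-1}\|)^2 \|x_{t-1}\|^2$, which fits into the first summand of the claimed error.

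The main obstacle is the cross term. The plan is to decompose $v_{t-1} = u_{t-1} + \Pi A_s^{1/2} s$ and treat the self-interaction and the $s$-interaction separately. The self-interaction $2\epsilon\langle (I - \eta A_s)u_{t-1}, u_{t-1}\rangle$ gives a multiple of $\|u_{t-1}\|^2$, which I would absorb via AM-GM into the slack in $(1 - \eta\delta)^2 \le 1 - \eta\delta$ (worth $\eta\delta(1 - \eta\delta)\|u_{t-1}\|^2$) together with the $\|x_{t-1}\|^2$ bucket. The $s$-interaction $2\epsilon\langle (I - \eta A_s)u_{t-1}, \Pi A_s^{1/2} s\rangle$ is handled by Cauchy-Schwarz followed by a bound on $\|\Pi A_s^{1/2} s\|^2$ obtained through the decomposition $A_s^{1/2} s = \Pi A_s^{1/2} s + (I - \Pi)A_s^{1/2} s$ together with the PSD inequality $\|(I - \Pi) A_s^{1/2} s\|^2 = s^{\top}(I - \Pi)A_s s \le \opnorm{(I - \Pi)A_s}\|s\|^2$; this is where the $\opnorm{(I - \Pi)A_s}\|s\|^2$ summand enters. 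The most delicate part is choosing the AM-GM weights so that every stray term lands cleanly in exactly one of the two advertised error buckets and the final constant works out to $\sqrt{8}$; this bookkeeping, rather than any conceptual hurdle, is what drives the explicit constant in the bound.
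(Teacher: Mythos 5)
Your rewrite of the iteration as $u_t = (I-\eta A_s)u_{t-1} + \epsilon\,v_{t-1}$ with $\epsilon = \eta\rho(\|s\|-\|x_{t-1}\|)$ is correct and equivalent to the paper's starting point, and your handling of the contractive term and of the pure error $\epsilon^2\|v_{t-1}\|^2$ is sound. The proposal breaks down on the cross term, and the failure is not mere bookkeeping: it is a missing ingredient. You split $v_{t-1} = u_{t-1} + \Pi A_s^{1/2}s$ and claim the self-interaction
\[
2\epsilon\langle(I-\eta A_s)u_{t-1},u_{t-1}\rangle \;\le\; 2\epsilon\,\|u_{t-1}\|^2
\]
can be absorbed ``via AM-GM'' into the slack $\eta\delta(1-\eta\delta)\|u_{t-1}\|^2$ together with the $\|x_{t-1}\|^2$ bucket. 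This cannot work. The coefficient $\epsilon = \eta\rho(\|s\|-\|x_{t-1}\|)$ can be arbitrarily large relative to $\eta\delta$ (think of early iterations where $\|x_{t-1}\|\approx 0$ and $\rho\|s\|\gg\delta$), so $2\epsilon\|u_{t-1}\|^2$ overwhelms the $\eta\delta\|u_{t-1}\|^2$ slack; and there is no product structure relating $\|u_{t-1}\|^2$ to $\|x_{t-1}\|^2$, so no AM-GM lets you trade one for the other. A Peter--Paul attempt (\ie $\|a+b\|^2\le(1+\theta)\|a\|^2+(1+1/\theta)\|b\|^2$ with $\theta\approx\eta\delta$) forces a $1/\delta$ factor into the error term, which is strictly worse than the advertised bound and destroys the sublinear-rate argument downstream, where $\delta$ is taken as small as $\eps/(10\|s\|^2)$.

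The reason your ``self'' and ``$s$'' pieces each look intractable is that the split throws away the cancellation: combining them recovers exactly
\[
u_{t-1}^\top(I-\eta A_s)v_{t-1} \;=\; x_{t-1}^\top\Pi A_s(x_{t-1}-s)\;-\;\eta\,x_{t-1}^\top\Pi A_s^2(x_{t-1}-s),
\]
and the paper's proof attacks the leading term directly via the substitution $A_s(x-s)=\nabla f(x)+\rho(\|s\|-\|x\|)x$, yielding $x_{t-1}^\top\Pi A_s(x_{t-1}-s)=x_{t-1}^\top\Pi\nabla f(x_{t-1})+\rho(\|s\|-\|x_{t-1}\|)\|\Pi x_{t-1}\|^2$, and then crucially uses the \emph{dynamical} facts $x_{t-1}^\top\nabla f(x_{t-1})\le 0$ (Corollary~\ref{corollary:monotone}) and $x_{t-1}^\top s\ge 0$ (Lemma~\ref{lemma:signs}) to bound $x_{t-1}^\top\Pi\nabla f(x_{t-1})\le -x_{t-1}^\top(I-\Pi)\nabla f(x_{t-1})\le \sqrt{2}\,\opnorm{(I-\Pi)A_s}\|s\|^2+\rho(\|s\|-\|x_{t-1}\|)\|(I-\Pi)x_{t-1}\|^2$. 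Your blueprint contains no use of either sign condition, and that is the gap. Relatedly, your plan to produce $\opnorm{(I-\Pi)A_s}\|s\|^2$ by bounding $\|\Pi A_s^{1/2}s\|^2$ via the decomposition of $A_s^{1/2}s$ does not hold up: knowing $\|(I-\Pi)A_s^{1/2}s\|^2\le\opnorm{(I-\Pi)A_s}\|s\|^2$ gives no control on $\|\Pi A_s^{1/2}s\|^2$, which in general is as large as $\opnorm{A_s}\|s\|^2$. The correct way that $\opnorm{(I-\Pi)A_s}$ enters is precisely through the quantity $-x_{t-1}^\top(I-\Pi)\nabla f(x_{t-1})$ appearing after the sign argument, not through Cauchy--Schwarz on $\Pi A_s^{1/2}s$.
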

\noindent
See Appendix~\ref{sec:proof-proj-lin-converge} for a proof. Letting
$\Pi_\smallval = \sum_{i : \lambda\supind{i} \ge \smallval + 
\lambda\supind{1}}
v_i v_i^T$
be the projection matrix onto the span of eigenvectors of $A$
with eigenvalues at least $\lambda\supind{1}(A) + \smallval$,
we obtain the following consequence of 
Lemma~\ref{lem:proj-lin-converge},
whose proof we provide in
Appendix~\ref{sec:proof-eventually-proj-lin-converge}.
\begin{restatable}{lemma}{lemEventuallyProjLinConverge}
  \label{lemma:eventually-proj-lin-converge}
  Let $t\geq0$, $\smallval \ge 0$, and
  define $\bar{\smallval}=\max\{\smallval, \gap\}$.
  If $\rs\le\gamma+\sqrt{\smallval\bar{\smallval}}$
  and $\rho \norm{x_t} 
  \geq\gamma-\frac{1}{3}\sqrt{\smallval\bar{\smallval}}$, then
  for any $\tau \ge 0$,
  \begin{align*}
    \norm{ \Pi_{\smallval}\As^{1/2}\left(x_{t+\tau}-\s\right)}^2
    & \leq\left(1-\eta\bar{\smallval}\right)^{\tau}\norm{
      \Pi_{\smallval}\As^{1/2}\left(x_{t}-\s\right)}^2+13\norm{ 
      \s}^2\smallval \\
    & \leq2\left(\beta+\rho\norm{ \s} \right)\norm{ 
    \s}^2e^{-\eta\bar{\smallval}\tau}+13\norm{ \s}^2\smallval.
  \end{align*}
\end{restatable}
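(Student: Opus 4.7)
The plan is to apply Lemma~\ref{lem:proj-lin-converge} with $\Pi=\Pi_\delta$ and iterate the resulting recursion, carefully bounding the error term it contributes using the hypotheses on $\rs$ and $\rho\norm{x_t}$.

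First I would verify that $\Pi_\delta A_s\succeq\bar{\delta}\Pi_\delta$, so that Lemma~\ref{lem:proj-lin-converge} applies with contraction factor $\bar{\delta}$.  By construction $\Pi_\delta$ projects onto eigenvectors of $A$ with eigenvalues at least $\lambda\supind{1}(A)+\delta$, and by the definition of the eigengap these eigenvalues are in fact at least $\lambda\supind{1}(A)+\bar{\delta}$ (either because $\bar\delta=\delta$ or because the smallest eigenvalue of $A$ strictly above $\lambda\supind{1}(A)$ equals $\lambda\supind{1}(A)+\gap$).  Since $A_s=A+\rs I$ and $\rs\geq\gamma$ by the optimality condition~\eqref{eqn:optimality}, we have $\Pi_\delta A_s\succeq(\rs-\gamma+\bar{\delta})\Pi_\delta\succeq\bar{\delta}\Pi_\delta$.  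The same analysis also yields the complementary bound $\opnorm{(I-\Pi_\delta)A_s}\leq\rs-\gamma+\delta\leq\sqrt{\delta\bar{\delta}}+\delta\leq 2\sqrt{\delta\bar{\delta}}$, where the last step uses $\delta\leq\bar{\delta}$.

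Next I would control the error term in Lemma~\ref{lem:proj-lin-converge} uniformly in the iteration index.  By Corollary~\ref{corollary:monotone}, $\norm{x_{t+k}}$ is non-decreasing and bounded by $\norm{s}$, so the hypothesis $\rho\norm{x_t}\geq\gamma-\tfrac{1}{3}\sqrt{\delta\bar{\delta}}$ extends to all $k\ge0$, and combined with $\rs\le\gamma+\sqrt{\delta\bar{\delta}}$ we obtain
\begin{equation*}
\rho(\norm{s}-\norm{x_{t+k}})=\rs-\rho\norm{x_{t+k}}\leq\tfrac{4}{3}\sqrt{\delta\bar{\delta}}.
\end{equation*}
Plugging this bound, together with $\norm{x_{t+k}}\leq\norm{s}$ and the bound on $\opnorm{(I-\Pi_\delta)A_s}$, into the error term of Lemma~\ref{lem:proj-lin-converge} yields the clean estimate
\begin{equation*}
\sqrt{8}\,\eta\rho(\norm{s}-\norm{x_{t+k}})\bigl[\rho(\norm{s}-\norm{x_{t+k}})\norm{x_{t+k}}^2+\opnorm{(I-\Pi_\delta)A_s}\norm{s}^2\bigr]\leq C\,\eta\,\delta\bar{\delta}\,\norm{s}^2
\end{equation*}
for an explicit constant $C$ (approximately $\sqrt{8}\cdot\tfrac{4}{3}\cdot\tfrac{10}{3}\approx 12.57$, which is what ultimately gives the constant $13$).

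Iterating the recursion from Lemma~\ref{lem:proj-lin-converge} then gives
\begin{equation*}
\norm{\Pi_\delta A_s^{1/2}(x_{t+\tau}-s)}^2\leq(1-\eta\bar{\delta})^{\tau}\norm{\Pi_\delta A_s^{1/2}(x_{t}-s)}^2+C\,\eta\,\delta\bar{\delta}\,\norm{s}^2\sum_{k=0}^{\tau-1}(1-\eta\bar{\delta})^{\tau-1-k}.
\end{equation*}
Bounding the geometric sum by $(\eta\bar{\delta})^{-1}$ collapses the additive term to $C\,\delta\norm{s}^2\leq 13\norm{s}^2\delta$, establishing the first inequality.  For the second inequality I would use $\opnorm{A_s}\leq\beta+\rs$ together with $\norm{x_t-s}^2\leq\norm{x_t}^2+\norm{s}^2\leq 2\norm{s}^2$ (via Lemma~\ref{lemma:signs} and Corollary~\ref{corollary:monotone}) to bound the starting quantity, and replace $(1-\eta\bar{\delta})^\tau$ by $e^{-\eta\bar{\delta}\tau}$.

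The main obstacle I anticipate is bookkeeping: the constant $13$ leaves essentially no slack, so the argument requires the sharp form $\rho(\norm{s}-\norm{x_{t+k}})\leq\tfrac{4}{3}\sqrt{\delta\bar{\delta}}$ rather than a cruder bound, and the two contributions inside the square bracket in the error term must both be expressed in the common unit $\sqrt{\delta\bar{\delta}}\norm{s}^2$ before the geometric sum is applied.  Everything else is a direct iteration of a contraction-plus-bounded-noise recursion.
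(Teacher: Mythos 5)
Your proposal is correct and matches the paper's own proof essentially step for step: you verify $\Pi_\delta A_s\succeq\bar\delta\Pi_\delta$ and $\opnorm{(I-\Pi_\delta)A_s}\le 2\sqrt{\delta\bar\delta}$, use monotonicity of $\norm{x_t}$ to get $\rho(\norm{s}-\norm{x_{t+k}})\le\tfrac{4}{3}\sqrt{\delta\bar\delta}$ for all $k\ge 0$, substitute into Lemma~\ref{lem:proj-lin-converge}, iterate, collapse the geometric sum, and bound the initial term via $\opnorm{A_s}\le\beta+\rs$ and $\norm{x_t-s}^2\le 2\norm{s}^2$. The constant bookkeeping ($\sqrt{8}\cdot\tfrac{4}{3}\cdot\tfrac{10}{3}\approx 12.57\le 13$) is exactly what the paper does implicitly, so there is nothing to add.
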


\newcommand{\Tsublinone}{T_1^{\rm sub}}
\newcommand{\Tsublintwo}{T_2^{\rm sub}}
\newcommand{\Tsublin}{T^{\rm sub}}

We use these lemmas to prove the desired bound~\eqref{eqn:sublinear-main}
by appropriate separation of the eigenspaces over which we guarantee
convergence.
To that end, we define
\begin{equation}
  \label{eqn:delta-defs}
  \smallval \defeq \frac{\eps}{10 \norm{\s}^2} ~~ ,
  ~~
  \bar{\smallval}
  \defeq \max\{\smallval, \gap\}
  ~~ \mbox{and} ~~
  \bar{\smallval}' \defeq \max\{ {\smallval},  ( \min\{\gap , \rs\} ) \le 
  \bar{\smallval}\} ,
\end{equation}
and note that the definition of $\gap$
immediately implies $\Pi_{\smallval}=\Pi_{\bar{\smallval}}$.
The growth guranteed by Lemma~\ref{lem:growth} shows that
$\rho\norm{x_{t}} \geq\gamma-\frac{1}{3}\sqrt{\smallval\bar{\smallval}'}$ 
for every
\begin{equation*}
  t \ge \Tsublinone \defeq
  \frac{6}{\eta\sqrt{\smallval\bar{\smallval}'}}\log\left(1+\frac{\gammaplus^2}{
      4\rho|b^{\left(1\right)}|}\right)
  = \frac{1}{\eta \sqrt{\smallval \bar{\smallval}'}} \tgrow(b\supind{1}).
\end{equation*}
Additionally, for $t \ge \Tsublinone$ we have
$\rho\norm{x_t} \geq\gamma-\frac{1}{3} \sqrt{\smallval\bar{\smallval}'}
\geq\gamma-\frac{1}{3}\sqrt{\smallval\bar{\smallval}}$ because
$\bar{\smallval} \ge \bar{\smallval}'$. Thus,
using that $\smallval, \bar{\smallval}' \le \bar{\smallval}$
and that $(\beta + 2 \rs \norm{\s}^2 / \eps \ge 2$ as in
the beginning of Sec.~\ref{sec:proofs}, we may define
\begin{equation*}
  \Tsublintwo \defeq
  \frac{1}{\eta\bar{\smallval}}
  \log \frac{2 (\beta + \rho \norm{\s})}{\smallval}
  \le \frac{1}{\eta \sqrt{\smallval \bar{\smallval}'}}
  \log\left(\left[ \frac{ (\beta + 2\rs)\norm{\s}^2}{\eps}\right]^6 \right)
  = \frac{\tconv(\eps)}{\eta \sqrt{\smallval \bar{\smallval}'}}.
\end{equation*}
Thus $2(\beta + \rs)\norm{\s}^2 e^{-\eta \bar{\smallval} t} \le 
\norm{\s}^2 
\smallval$ for every $t \ge \Tsublintwo$, and
by Lemma~\ref{lemma:eventually-proj-lin-converge} we have 
\begin{equation}
  \norm{\Pi_{\smallval}\As^{1/2}(x_t - \s)}^{2}\leq
  \norm{\s}^2 \smallval + 13\norm{\s}^2 \smallval = 
  14\norm{\s}^{2} \smallval,
  \label{eqn:proj-bound}
\end{equation}
for every $t \ge \Tsublin = \Tsublinone+\Tsublintwo$.

We now translate the guarantee~\eqref{eqn:proj-bound} on the distance from $x_t$ to
$\s$ in the subspace of ``large'' eigenvectors of $A$ to
a guarantee on the solution quality $f(x_t)$. Using
the expression~\eqref{eq:fx-s-expression} for $f(x)$, the
orthogonality of $I - \Pi_\smallval$ and $\Pi_\smallval$ and 
$\norm{x_t}\le\norm{\s}$, we have
\begin{equation*}
  f(x_t) \le %
  f(\s)+ \half \norm{(I - \Pi_{\smallval})\As^{\frac{1}{2}}(x_t - \s)}^{2}
    + \half\norm{\Pi_{\smallval}\As^{\frac{1}{2}}(x_t - \s)}^{2}+
    \frac{\rho\norm{\s} }{2}(\norm{\s} -\norm{x_t} )^{2}.
\end{equation*}
Now we note that
\begin{equation}\label{eq:subpsace-opnorm-bound}
  \opnorm{(I - \Pi_\smallval)\As}
  = \max_{i : \lambda\supind{i} < \lambda\supind{1} + \smallval}
  |\lambda\supind{i} + \rho \norm{\s}|
  \le -\gamma + \smallval + \rho \norm{\s}
  \le 2 \smallval,
\end{equation}
where we have used our assumption~\eqref{eqn:assumption-bad-condition}
that $\rho \norm{\s} - \gamma \le \frac{\eps}{10 \norm{\s}^2} = \smallval$.
Using this gives
\begin{equation}\label{eq:sub-error}
  f(x_t) \le f(\s) + \smallval \norm{x_t - \s}^2
  + 7 \norm{\s}^2 \smallval
  + \frac{\rho \norm{\s}}{2} (\norm{\s} - \norm{x_t})^2,
\end{equation}
where we use inequality~\eqref{eqn:proj-bound}.  Because
$\rho \norm{x_t} \ge \gamma - \frac{1}{3} \sqrt{\smallval \bar{\smallval}'}$
for $t \ge \Tsublinone$, we obtain
\begin{equation*}
  0 \le \rho (\norm{\s} - \norm{x_t})  \le
  \rho \norm{\s} - \gamma - (\rho \norm{x_t} - \gamma)
  \le \frac{4}{3} \sqrt{\smallval \bar{\smallval}'}.
\end{equation*}
The above inequality provides an upper bound on $(\norm{\s}-\norm{x_t})^2$. 
Alternatively, we may bound $(\norm{\s}-\norm{x_t})^2\le 
\norm{\s}^2$ using $\norm{x_t}\le\norm{\s}$ 
(Lemma~\ref{lemma:monotone}). Therefore
\begin{equation}\label{eq:norm-diff-bound}
  \frac{\rho \norm{\s}}{2} (\norm{\s} - \norm{x_t})^2 \le \norm{\s}^2 
  \min\left\{\frac{\rs}{2}, 
    \frac{16 \bar{\smallval}'\smallval}{18\rs}\right\}\le \norm{\s}^2 \smallval,
\end{equation}
where the final inequality follows as $\bar{\smallval}' \le 
\max\{\smallval,  
\rs\}$.
Substituting the bound~\eqref{eq:norm-diff-bound} into 
\eqref{eq:sub-error} with 
$\norm{\s - x_t}^2 \le 2 \norm{\s}^2$ (by Lemma~\ref{lemma:signs}),
we find
\begin{equation*}
  f(x_t)\le f(\s) + 9 \norm{\s}^2 \smallval
  \le  f(x_t) + \eps,
\end{equation*}
where we substitute
$\smallval = \frac{\eps}{10 \norm{\s}^2}$.
Summarizing, if 
$\rho \norm{\s} - \gamma \le \smallval = \frac{\eps}{10 \norm{\s}^2}$,
the point $x_t$ is $\eps$-suboptimal for problem~\eqref{eq:problem} whenever
  $t \ge \frac{\tgrow(b\supind{1}) + 
  \tconv(\eps)}{\eta\sqrt{\smallval\bar{\smallval}'}} 
  \ge \Tsublinone + \Tsublintwo$,
where
\[\sqrt{\smallval\bar{\smallval}'} = \max\left\{\frac{\eps}{10\norm{\s}^2}, 
\sqrt{\frac{\eps}{10\norm{\s}^2}\min\{\gap , \rs\}}\right\}.
\]

\newcommand{\stilde}{\tilde{x}_\star}

\subsection{Proof of Theorem~\ref{thm:pert-main-result}}
\label{sec:proof-pert-main-result}

Theorem \ref{thm:pert-main-result} follows from three basic observations about
the effect of adding a small uniform perturbation to $b$, which we summarize
in the following lemma (see Section~\ref{sec:proof-rand-prop} for a proof).

\begin{restatable}{lemma}{lemPertProp}
  \label{lem:rand-prop}
  Set $\tilde{b}=b+\sigma q$, where $q$ is uniform on the unit
  sphere in $\mathbb{R}^{d}$ and $\sigma>0$.  Let
  $\tilde{f}\left(x\right)=\frac{1}{2}x^{\T}Ax+\tilde{b}^{\T}x+\frac{1}{3}\rho\norm{
    x}^3$ and let $\stilde$ be a global minimizer of
  $\tilde{f}$. Then, for any $\delta > 0$
  \begin{enumerate}[(i)]
  \item For $d > 2$, $\P( |\tilde{b}\supind{1}|
    \le \sqrt{\pi}\sigma\delta/\sqrt{2 d}) \leq\delta$
  \item $| f(x)-\tilde{f}(x) | \le \sigma\norm{x}$
    for all $x\in\mathbb{R}^{d}$
  \item $\big| \norm{\s}- \norm{\stilde} \big| \leq \sqrt{2\sigma/\rho}$.
  \end{enumerate}
\end{restatable}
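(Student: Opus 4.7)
The plan is to address the three parts in order, since parts~(ii) and~(i) are short while part~(iii) is the substantive step and relies on the identity~\eqref{eq:fx-s-expression}.

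For part~(ii), the difference $\tilde{f}(x) - f(x) = \sigma q^\top x$ is linear in $x$, so Cauchy--Schwarz together with $\|q\|=1$ immediately gives $|f(x)-\tilde{f}(x)| = \sigma|q^\top x| \le \sigma\|x\|$.

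For part~(i), I would use the explicit marginal density of a single coordinate of a uniform vector on the sphere $S^{d-1}$: writing $q^{(1)} = v_1^\top q$, one has density $p(t) = C_d(1-t^2)^{(d-3)/2}$ on $[-1,1]$, where $C_d = \Gamma(d/2)/[\sqrt{\pi}\,\Gamma((d-1)/2)]$. A Gautschi-type bound on the Gamma ratio yields $C_d \le \sqrt{d/(2\pi)}$, and since $p$ is maximized at $t=0$ the density of $\tilde{b}^{(1)} = b^{(1)} + \sigma q^{(1)}$ is bounded by $C_d/\sigma$. Hence $\P(|\tilde{b}^{(1)}| \le t) \le 2t\, C_d/\sigma \le t\sqrt{2d/\pi}/\sigma$; setting $t = \sqrt{\pi/(2d)}\,\sigma\delta$ yields the stated bound $\delta$.

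Part~(iii) is the crux. Optimality gives $f(\tilde{s})\ge f(s)$ and $\tilde{f}(s)\ge\tilde{f}(\tilde{s})$, and applying the identity~\eqref{eq:fx-s-expression} around $s$ (for $f$) and around $\tilde{s}$ (for $\tilde{f}$, with $A_{\tilde{s}}$ in place of $A_s$), each nonnegative difference decomposes as a PSD quadratic plus a nonnegative cubic term. Summing the two inequalities and using $\tilde{f}(x)-f(x) = \sigma q^\top x$ to telescope the function values produces the key identity
\begin{equation*}
\sigma q^\top(s - \tilde{s}) \;=\; \tfrac{1}{2}(s-\tilde{s})^\top(A_s + A_{\tilde{s}})(s-\tilde{s}) + \tfrac{\rho}{2}\bigl(\|s\|-\|\tilde{s}\|\bigr)^2\bigl(\|s\| + \|\tilde{s}\|\bigr),
\end{equation*}
where both right-hand terms are nonnegative by~\eqref{eqn:optimality}. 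Discarding the quadratic term and factoring the cubic as $(\|s\|-\|\tilde{s}\|)^2(\|s\|+\|\tilde{s}\|) = \bigl|\|s\|-\|\tilde{s}\|\bigr|\cdot\bigl|\|s\|^2 - \|\tilde{s}\|^2\bigr|$, together with a Cauchy--Schwarz bound on $q^\top(s-\tilde{s})$, delivers the claimed relation between $\bigl|\|s\|^2-\|\tilde{s}\|^2\bigr|$ and $\sigma/\rho$.

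The main obstacle is pinning down the exact constant $2\sigma/\rho$. The crude inequality $|q^\top(s-\tilde{s})| \le \|s-\tilde{s}\|$ together with the reverse triangle inequality $\|s-\tilde{s}\| \ge \bigl|\|s\|-\|\tilde{s}\|\bigr|$ only yields $(\|s\|-\|\tilde{s}\|)^2 \le 2\sigma/\rho$, which in general is incomparable to the stated bound on $\bigl|\|s\|^2 - \|\tilde{s}\|^2\bigr|$. Obtaining the precise form requires either splitting $q^\top(s-\tilde{s}) = q^\top s - q^\top\tilde{s}$ and using $|q^\top s|\le\|s\|$, $|q^\top\tilde{s}|\le\|\tilde{s}\|$ separately, or retaining the discarded quadratic term to control $\|s-\tilde{s}\|$ in terms of $\sigma$; this case analysis is where the technical work lies.
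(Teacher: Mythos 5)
Parts (i) and (ii) of your proposal essentially match the paper: (ii) is the same one-line Cauchy--Schwarz argument, and for (i) the paper also argues via the density of $q^{(1)}$ (noting $[q^{(1)}]^2 \sim \mathrm{Beta}(\tfrac12,\tfrac{d-1}{2})$), its symmetry and unimodality for $d>2$, and a $\sqrt{d/(2\pi)}$-type bound on the density peak.

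Part (iii) is where your proposal genuinely diverges from the paper and where the gap you flag is real and not easily patched. Your identity
\begin{equation*}
\sigma q^{\T}(s-\tilde{s}) = \tfrac12(s-\tilde{s})^{\T}(A_s+A_{\tilde s})(s-\tilde{s}) + \tfrac{\rho}{2}\bigl(\|s\|-\|\tilde s\|\bigr)^2\bigl(\|s\|+\|\tilde s\|\bigr)
\end{equation*}
is correct (it follows from applying~\eqref{eq:fx-s-expression} around $s$ and around $\tilde{s}$ and adding). But to conclude $\bigl|\|s\|^2-\|\tilde s\|^2\bigr| \le 2\sigma/\rho$ from it, after discarding the PSD quadratic term and factoring the cubic as $\bigl|\|s\|-\|\tilde s\|\bigr|\cdot\bigl|\|s\|^2-\|\tilde s\|^2\bigr|$, you would need $|q^{\T}(s-\tilde s)| \le \bigl|\|s\|-\|\tilde s\|\bigr|$, whereas the available bound $|q^{\T}(s-\tilde s)|\le\|s-\tilde s\|$ sits on the wrong side of the reverse triangle inequality. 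Splitting as $|q^{\T}s|+|q^{\T}\tilde s|\le\|s\|+\|\tilde s\|$ instead gives $(\|s\|-\|\tilde s\|)^2 \le 2\sigma/\rho$, which, as you note, does not in general imply the claim. Retaining the quadratic term does not rescue the argument either, since $A_s+A_{\tilde s}$ can be singular (when $\rho\|s\|=\rho\|\tilde s\|=\gamma$), so one cannot uniformly lower-bound it by $\|s-\tilde s\|^2$. The paper avoids this entirely by a different route: it treats $\|s\|^2$ as a function of $b$ via the map $s\mapsto b=-A_s s$, whose Jacobian is $-\nabla^2 f(s)$, and applies the inverse function theorem to compute $\partial\|s\|^2/\partial b=-2(\nabla^2 f(s))^{-1}s$. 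Since $\nabla^2 f(s)\succeq\rho\,ss^{\T}/\|s\|$, this derivative has norm at most $2/\rho$, so $b\mapsto\|s\|^2$ is $2/\rho$-Lipschitz and part (iii) follows immediately from $\|\tilde b-b\|=\sigma$. Your algebraic identity is a nice observation, but without the Lipschitz-via-implicit-differentiation idea it does not close; I'd recommend switching to the paper's argument for this part.
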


With Lemma~\ref{lem:rand-prop} in hand, our proof proceeds in three parts: in
the first two, we provide bounds on the iteration complexity of each of the
modes of convergence that Theorem~\ref{thm:main-result} exhibits
in the perturbed problem with vector $\tilde{b}$. The final part shows
that the quality of the (approximate) solutions $\tilde{x}_t$ and $\stilde$
is not much worse than $\s$. 

Let $\tilde{f}, \tilde{b}$ and $\stilde$ be as defined in 
Lemma~\ref{lem:rand-prop}. By Theorem~\ref{thm:main-result}, we know
that $\tilde{f}(\tilde{x}_{t}) \le \tilde{f}(\stilde) + \eps$ for all
\begin{subequations}
\begin{equation}
  \label{eqn:perturbed-time}
  \hspace{-3pt}t \geq \frac{6}{\eta}
    \left(\log\left(1+\frac{\gammaplus^{2}/4}{\rho |\tilde{b}\supind{1}|}\right)
    +\log\frac{ (\beta  + 2\rho\norm{\stilde})\norm{\stilde}^2}{\eps}\right) 
    \min\left\{\frac{1}{\rho 
    \norm{\stilde} - \gamma},
      \frac{10 \norm{\stilde}^2}{\eps}\right\},
\end{equation}
and that if  $\rho\norm{\stilde}-\gamma \le 
\frac{\eps}{10\norm{\stilde}}$, then $\tilde{f}(\tilde{x}_{t}) \le 
\tilde{f}(\stilde) + 
\eps$ for all
\begin{equation}
\label{eqn:perturbed-time-2}
t \geq \frac{6}{\eta}
\left(\log\left(1+\frac{\gammaplus^{2}}{4\rho |\tilde{b}\supind{1}|}\right)
+\log\frac{ (\beta  + 2\rho\norm{\stilde})\norm{\stilde}^2}{\eps}\right) 
\sqrt{
\frac{10 
\norm{\stilde}^2}{\eps}\frac{1}{\min\{\gap , \rho\norm{\stilde}\}}}.
\end{equation}
\end{subequations}
We now turn to bounding expressions~\eqref{eqn:perturbed-time} and 
\eqref{eqn:perturbed-time-2} appropriately:  
Section~\ref{sec:outside-log-part} deals with the occurrences of 
$\norm{\stilde}$ outside the logarithm, and  
Section~\ref{sec:inside-log-part} bounds the terms $\tilde{b}\supind{1}$ 
and $\norm{\stilde}$ appearing inside the logarithm.

\subsubsection{Part 1: upper bounding terms outside the log}
\label{sec:outside-log-part}
Recalling that $\sigma = \frac{\rho\sigbar\eps^2}{2(10\beta+20\rs)^2\norm{\s}^2}$ and 
$\eps \le (\half\beta + \rs)\norm{\s}^2$, we have $\sigma \le 
{\frac{1}{800}}\sigbar\rho\norm{\s}^2$. Thus, part (iii) of 
Lemma~\ref{lem:rand-prop} gives
\begin{equation*}
  |\norm{\s} - \norm{\stilde}| \le \sqrt{2\sigma/\rho} \le \sqrt{\sigbar} 
  \norm{\s}/20,
  ~~ \mbox{so} ~~
  \norm{\stilde} \in(1 \pm \sqrt{\sigbar}/20) \norm{\s}.
\end{equation*}
Similarly, we have
\begin{equation}\label{eq:stilde-bound-tight}
 \big| \norm{\s} - \norm{\stilde}\big|
\le \sqrt{\frac{\sigbar\eps^2}{(10\beta+20\rs)^2\norm{\s}^2}}
\le \frac{\sqrt{\sigbar}}{20}\frac{\eps }{\rho\nss}~.
\end{equation}
Now, suppose that $\frac{\eps}{10 \norm{\s}^2} \le \rho \norm{\s} - \gamma$.
Substituting this into the bound~\eqref{eq:stilde-bound-tight} yields
$|\norm{\s} - \norm{\stilde}|
\le \frac{\sqrt{\sigbar}}{2\rho}(\rs-\gamma)$, and
rearranging, we obtain
\begin{equation*}
  \rho\norm{\stilde}- \gamma \ge \left(1-0.5\sqrt{\sigbar}\right)(\rs-\gamma) 
  \ge 
  \frac{\rs-\gamma}{1+\sqrt{\sigbar}}
\end{equation*}
because $\sigbar \le 1$.
We combine the preceding bounds to obtain
\begin{subequations}
\begin{equation}
  \label{eqn:upper-bound-min-stuff}
  \min\left\{\frac{1}{\rho \norm{\stilde} - \gamma},
    \frac{10 \norm{\stilde}^2}{\eps}\right\}
  \le (1+\sqrt{\sigbar})^2\min\left\{\frac{1}{\rho \norm{{\s}} - \gamma},
  \frac{10 \norm{{\s}}^2}{\eps}\right\}
\end{equation}
and
\begin{equation}
\label{eqn:upper-bound-min-stuff-2}
\sqrt{\frac{10 
	\norm{\stilde}^2}{\eps}\cdot\frac{1}{\min\{\gap , 
	\rho\norm{\stilde}\}}}
\le
(1+\sqrt{\sigbar})\sqrt{\frac{10 
	\norm{{\s}}^2}{\eps}\cdot\frac{1}{\min\{\gap , \rs\}}} \ .
\end{equation}
\end{subequations}

The bound \eqref{eq:stilde-bound-tight} also implies
$\rho \norm{\stilde} - \gamma \le  \rs - \gamma + 
\frac{\sqrt{\sigbar}\eps}{20\nss}$.
When $\rs-\gamma \le (1-2\sqrt{\sigbar}/3)\frac{\eps}{10\nss}$, we thus have
\begin{equation*}
\rho \norm{\stilde} - \gamma
 \le \frac{\eps}{10\nss}\left( {1-\frac{2}{3}\sqrt{\sigbar}} +\frac{1}{2}\sqrt{\sigbar}\right)
 \le \frac{\eps(1+\sqrt{\sigbar}/20)^2(1-\sqrt{\sigbar}/6)}{10\norm{\stilde}^2}
 \le \frac{\eps}{10\norm{\stilde}^2},
\end{equation*}
where we have used $\norm{\stilde} \le (1+\sqrt{\sigbar}/20)\ns$ and $\sigma\le 1$. 
Therefore, $\tilde{f}(\tilde{x}_{t}) \le \tilde{f}(\stilde) + 
\eps$ whenever the conditions $\rs-\gamma \le 
(1-2\sigbar/3)\frac{\eps}{10\nss}$ and~\eqref{eqn:perturbed-time-2} hold.

\subsubsection{Part 2: upper bounding terms inside the log}
\label{sec:inside-log-part}
  Fix a confidence 
level  $\delta \in (0, 1)$. By Lemma~\ref{lem:rand-prop}(i),
$1 / |\tilde{b}^{\left(1\right)}| \leq \sqrt{2 d} / 
(\sqrt{\pi}\sigma\delta)\leq
\sqrt{d}/(\sigma\delta)$ with 
probability at least $1-\delta$, so
\begin{flalign*}
  6\log \left(1+\frac{\gammaplus^2}{4 \rho |\tilde{b}\supind{1}|}\right)
  & \le
  6 \log \left(1 + \frac{\gammaplus^2 \sqrt{d}}{4 \rho \sigma \delta}\right)
  \stackrel{(\star)}{\le} 6 \log \left(1 +\I_{\{\gamma>0\}}\frac{50\sqrt{d}}{ 
	\sigbar 
	\delta}\right) +
\\
 & 12\log\frac{ (\beta + 2\rs) \nss}{\eps}
  = \tiltgrow(d,\delta,\sigbar) + \frac{12}{20} \tiltconv(\eps),
\end{flalign*}
where inequality $(\star)$ uses that $\rs \ge \gammaplus$ and 
$\eps \le (\beta + \half \rs)\nss$.
Using $\norm{\stilde} \le (1+\sqrt{\sigbar}/20)\norm{\s}$ yields the upper 
bound
\begin{equation*}
  6\log \frac{ (\beta + 2\rho \norm{\stilde}) \norm{\stilde}^2}{\eps}
  \le 
  6\log \frac{ (\beta + 2\rs) \norm{{\s}}^2}{\eps}
  + 18\log(1+\sqrt{\sigbar}/20)
  \le \frac{8}{20}\tiltconv(\eps),
\end{equation*}
where the second inequality follows as $18\log(1+\sqrt{\sigbar}/20)<2\log2\le 
2\log\frac{ (\beta + 2\rs) \norm{{\s}}^2}{\eps}$.

Substituting the above bounds and
the upper bounds~\eqref{eqn:upper-bound-min-stuff} 
and~\eqref{eqn:upper-bound-min-stuff-2}
into expressions~\eqref{eqn:perturbed-time} 
and~\eqref{eqn:perturbed-time-2},
we see that the iteration bounds claimed in 
Theorem~\ref{thm:pert-main-result} hold. To complete
the proof we need only bound the quality of the solution $\tilde{x}_t$.

\subsubsection{Part 3: bounding solution quality}
\label{sec:solution-quality-part}
 We recall that $\sigma = 
 \frac{\rho\sigbar\eps^2}{2(10\beta+20\rs)^2\norm{\s}^2}
 \le \frac{\sigbar \eps}{800\ns}$ and 
$\norm{\stilde} \le 1+\sqrt{\sigbar}/20 \norm{\s} \le 
2\norm{\s}$, so $\sigma \le 
\frac{\sigbar\eps}{\norm{\s}+\norm{\stilde}}$. Thus, whenever 
$\tilde{f}(\tilde{x}_t) \le \tilde{f}(\stilde) + \eps$,
\begin{align*}
  f(\tilde{x}_{t})
  & \overset{\text{(a)}}{\leq} \tilde{f}(\tilde{x}_{t}) + \sigma\norm{\tilde{x}_{t}}
    \leq\tilde{f}(\stilde) + \eps +\sigma\norm{\tilde{x}_{t}}
    \overset{\text{(b)}}{\leq} \tilde{f}(\stilde) + \eps
    +\sigma\norm{\stilde} \\
  & \overset{\text{(c)}}{\leq}\tilde{f}(\s) + \eps +\sigma\norm{\stilde}
    \overset{\text{(d)}}{\leq} f(\s) + \sigma(\norm{\stilde} + \norm{\s}) + 
    \eps
  \le f(\s) + (1+\sigbar)\eps,
\end{align*}
where transitions (a) and (d) follow from part (ii) of  
Lemma~\ref{lem:rand-prop}, transition (b) follows from $\norm{\tilde{x}_t} \le 
\norm{\stilde}$ (Lemma~\ref{lemma:monotone}), and transition (c) 
follows from  $\tilde{f}(\stilde) = \min_{z\in \R^d}\tilde{f}(z)$.
\section{Convergence of a line search method}
\label{sec:linesearch}

The maximum step size allowed by
Assumption~\ref{assu:step-size} may be too conservative
(as is frequent with gradient descent).
With that in mind, in this section we briefly analyze
line search schemes of the form
\begin{equation}
  \label{eq:grad-variablestep}
  x_{t+1} = x_{t} - \eta_t \grad f(x_t)~~\mbox{where}~~
  \eta_t = \argmin_{\eta \in  C_t} f(x_t - \eta \grad f(x_t) )
\end{equation}
and $C_t$ is a (possibly time-varying) interval of allowed step sizes. For the 
problem~\eqref{eq:problem}, $\eta_t$ is computable for any interval 
$C_t$, as the critical points of the function $h(\eta) = f(x_t - \eta \grad 
f(x_t) )$ are roots of a quatric polynomial with coefficients determined by
$\norm{x}, \norm{g}, g^T A g$, and $x^T g$, so $\eta_t$ must be 
a root or an edge of the interval $C_t$.

The unconstrained choice $C_t = \R$ yields the \textit{steepest descent}
method~\cite{NocedalWr06}.
For steepest descent it is possible that $\eta_t < 0$ and that 
convergence to a sub-optimal local minimum of $f$ occurs.
Consequently,
we propose choosing the updates~\eqref{eq:grad-variablestep}
using the interval
\begin{equation}\label{eq:constrained-steepest}
  C_t = \left[0,\hinge{
      \frac{\grad f(x_t)^{\T} A \grad f(x_t)}{\|\grad f(x_t)\|^2}
      + \rho \|x_t\|}^{-1}\right].
\end{equation}
The scheme~\eqref{eq:constrained-steepest} converges to the 
global minimum of $f$ (see Appendix~\ref{app:linesearch} for proof):

\begin{restatable}{proposition}{propLinesearch}\label{prop:linesearch}
  Let $x_t$ be the iterates of gradient descent with step sizes selected by
  the constrained minimization~\eqref{eq:constrained-steepest}. Let
  Assumption~\ref{assu:init} hold and assume $b^{(1)}\neq 0$.  Then $\s$ is the
  unique global minimizer of $f$ and $\lim_{t\to\infty}x_t = \s$.
\end{restatable}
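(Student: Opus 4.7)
The plan is to parallel the proof of Proposition~\ref{prop:converge}, adapting each step to the variable step sizes produced by the constrained line search~\eqref{eq:constrained-steepest}. I would aim for the same four ingredients: uniqueness of $s$, monotonicity of $\norm{x_t}$ and coordinate sign conditions, monotone decrease of $f(x_t)$ with $\norm{\grad f(x_t)}\to 0$, and identification of the unique limit via Claim~\ref{claim:s}.

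Uniqueness is immediate. Since $b^{(1)}\neq 0$, the first coordinate of $\grad f(s) = A_s s + b = 0$ reads $(\rho\norm{s}-\gamma)s^{(1)} = -b^{(1)}\neq 0$; combined with $\rho\norm{s}\ge\gamma$ from~\eqref{eqn:optimality}, this forces $\rho\norm{s}>\gamma$, so $A_s\succ 0$ and $s$ is the unique global minimizer.

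The main obstacle is adapting Lemmas~\ref{lem:monotone-weak} and~\ref{lemma:signs} to the new step size, because Assumption~\ref{assu:step-size} no longer applies. The coordinate recursion $x_{t+1}^{(i)} = (1 - \eta_t(\lambda^{(i)}(A) + \rho\norm{x_t})) x_t^{(i)} - \eta_t b^{(i)}$ preserves $x_t^{(i)} b^{(i)}\le 0$ inductively only when the bracketed factor is non-negative for every $i$, and the constraint $\eta_t \le \hinge{g^{\T}Ag/\norm{g}^2 + \rho\norm{x_t}}^{-1}$ alone does not yield this (since $g^{\T}Ag/\norm{g}^2$ may be strictly smaller than $\beta$). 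I would therefore combine the constraint with the first-order optimality condition at the line search minimizer: either $\eta_t$ lies in the interior of $C_t$ with $h'(\eta_t)=0$, or $\eta_t=\alpha_{\max}$ with $h'(\alpha_{\max})\le 0$, where $h(\eta)=f(x_t-\eta g)$ and $g=\grad f(x_t)$. Expanding $h'$ in the eigenbasis of $A$ should show that any coordinate flip $x_{t+1}^{(i)}b^{(i)}>0$ would violate one of these conditions, completing the induction and, together with Lemma~\ref{lemma:signs}-style multiplication, yielding $\norm{x_{t+1}}\ge\norm{x_t}$ and $x_t^{\T}s\ge 0$. This is the step I expect to be the most delicate.

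Once the sign and monotonicity properties are in hand, the remaining work follows the template of Proposition~\ref{prop:converge}. Since $0\in C_t$ and $\eta_t$ minimizes $h$ on $C_t$, the sequence $f(x_t)$ is non-increasing and bounded below, hence convergent. The bound $\norm{x_t}\le\norm{s}$ mirrors Corollary~\ref{corollary:monotone} and places the iterates in a compact set on which $f$ is $(\beta+2\rs)$-smooth. A sufficient-decrease estimate, obtained by comparing $f(x_{t+1})$ with $f(x_t-\bar\eta g)$ for a safe short step $\bar\eta\in C_t$ of order $1/(\beta+\rs)$, gives $\sum_t \norm{\grad f(x_t)}^2<\infty$ and hence $\grad f(x_t)\to 0$. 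Any accumulation point $s'$ of $\{x_t\}$ is then a critical point, and the preserved sign condition $x_t^{(1)}b^{(1)}\le 0$ passes to the limit, so Claim~\ref{claim:s} identifies $s'=s$. Uniqueness of the accumulation point together with boundedness gives $x_t\to s$.
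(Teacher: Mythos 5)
Your overall scaffolding (sufficient decrease via a safe short step, compactness, $\grad f(x_t)\to 0$, then Claim~\ref{claim:s}) matches the paper's, but the monotonicity portion of your plan is trying to prove things that are actually false for the line-search scheme, and it also misses the one observation that makes the sign argument trivial.

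The paper explicitly notes (in the discussion around Fig.~\ref{fig:steepest-fail}) that under line search the norm $\norm{x_t}$ is \emph{not} monotone and in fact overshoots $\norm{s}$; so neither $\norm{x_{t+1}}\ge\norm{x_t}$ nor $\norm{x_t}\le\norm{s}$ (which you invoke ``mirroring'' Corollary~\ref{corollary:monotone}) can be established, and any argument built on them must fail. The paper replaces the norm bound by a much weaker one, $\norm{x_t}\le 2R$, obtained from a level-set argument (Lemma~\ref{lemma:small-x-line-search}): $f(x)>0$ whenever $\norm{x}>2R$, while $f(x_t)\le f(x_0)\le 0$ since $0\in C_t$ keeps $f(x_t)$ non-increasing. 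Similarly, you do not need the full coordinate-wise sign conditions of Lemma~\ref{lemma:signs}; the proof only needs $x_t^{(1)}b^{(1)}\le 0$. And you do not need any first-order optimality condition at the line-search minimizer to get it: because $\frac{g^{\T}Ag}{\norm{g}^2}\ge\lambda^{(1)}(A)=-\gamma$ for every $g$, the guard $\eta_t\le\hinge{\frac{g^{\T}Ag}{\norm{g}^2}+\rho\norm{x_t}}^{-1}$ already gives $1-\eta_t(-\gamma+\rho\norm{x_t})\ge 1-\eta_t\hinge{\frac{g^{\T}Ag}{\norm{g}^2}+\rho\norm{x_t}}\ge 0$, which directly propagates $x_t^{(1)}b^{(1)}\le 0$. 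This is precisely why the guard is defined as a quadratic form in $g$ rather than via $\beta$. Once $\norm{x_t}\le 2R$ and $x_t^{(1)}b^{(1)}\le 0$ are in hand, your safe-short-step decrease estimate (with $\bar\eta=1/(\beta+4\rho R)$, always feasible since the quadratic form is at most $\beta$ and $\norm{x_t}\le 2R$) and Claim~\ref{claim:s} finish the argument exactly as you sketch.
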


\begin{figure}
  \begin{center}
    \includegraphics[width=0.9\columnwidth]{./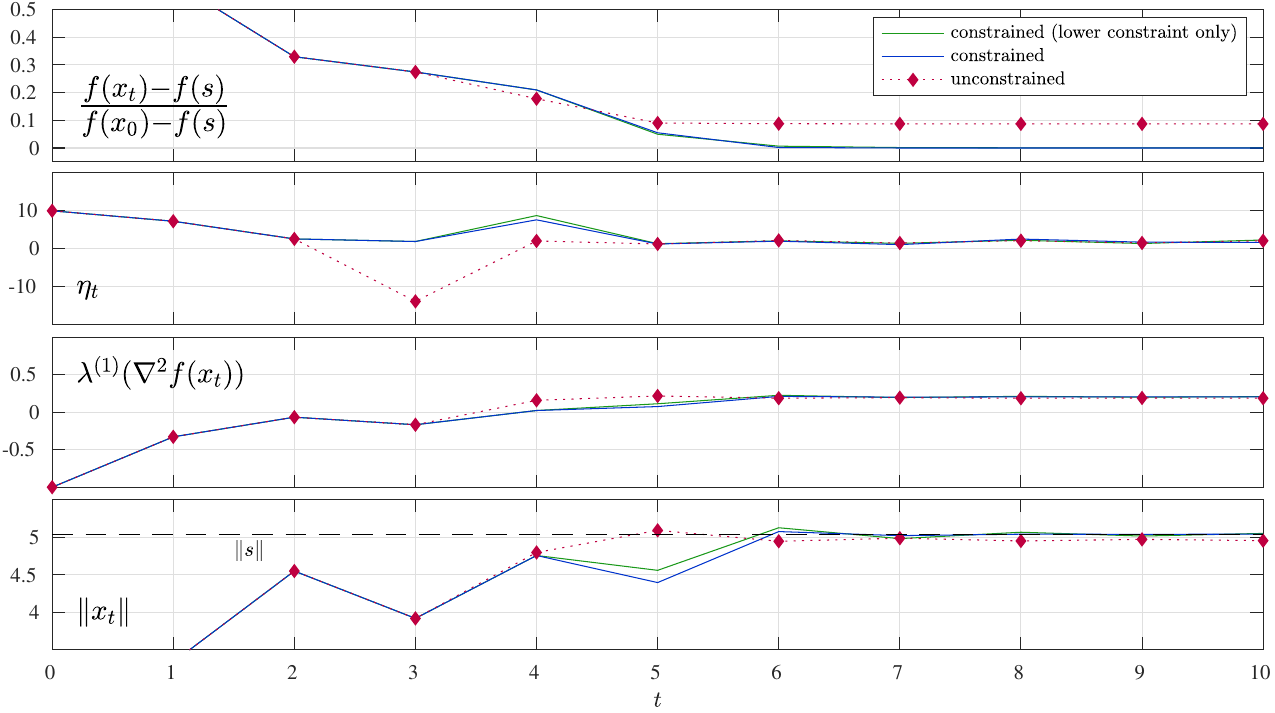}
    \caption{\label{fig:steepest-fail}
      Steepest descent variants applied on $A = \diag ([-1; -0.8; -0.5])$, $b
      = [0.04; 0.15; 0.3]$ 
      and $\rho = 0.2$. The red, green, and blue curves correspond to $C_t = 
      \R$, $C_t = [0,\infty)$ and $C_t$ given 
      by~\eqref{eq:constrained-steepest}, respectively. }
  \end{center}
\end{figure}

In Fig.~\ref{fig:steepest-fail}, we display the quantities
$f(x_t), \eta_t, \lambda^{(1)}(\hess f(x_t))$, and $\|x_t\|$ for
the above
line-search variants 
on a $d= 3$-dimensional problem instance. The step sizes differ at
iteration $t=3$, where the unconstrained gradient step makes almost $50\%$ 
more progress than steps
restricted to be positive. However, it then converges to a 
sub-optimal local minimum (note $\lambda^{(1)}(\hess f(x_t)) > 
0$) approximately $9\%$ worse than the global minimum achieved by the guarded
sequence~\eqref{eq:constrained-steepest}. The step sizes these
methods choose are significantly larger than the $\eta$
Assumption~\ref{assu:step-size} allows, which is approximately $0.12$.
Fig.~\ref{fig:steepest-fail} reveals a difference between fixed step
size gradient descent and the line-search schemes---the
norm $\norm{x_t}$ of the line-search-based iterates is non-monotonic and
overshoots $\norm{\s}$. Our convergence rate proofs hinge on
Lemma~\ref{lemma:monotone}, that $\norm{x_t}$ is increasing, so
extension of our rates to line-search schemes is not
straightforward. 

\begin{figure}
  \begin{center}
    \heavyplot{\includegraphics[width=0.9\columnwidth]{./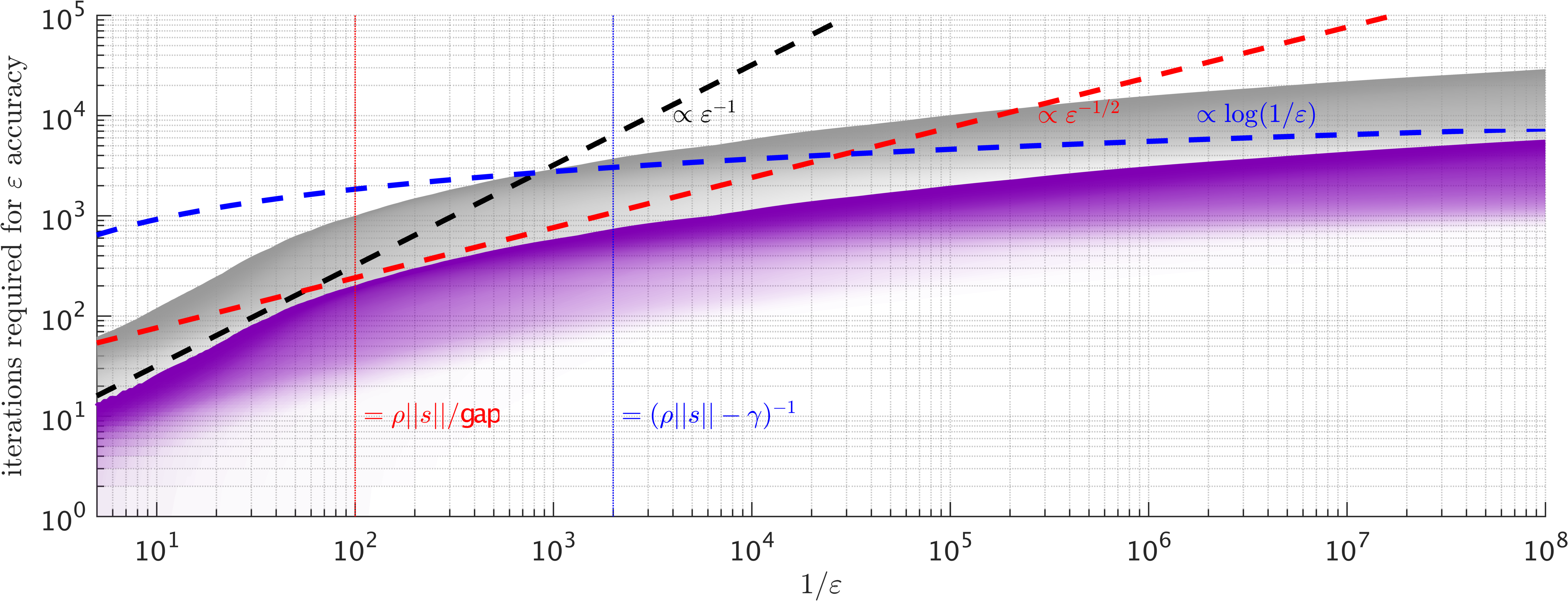}}
    \caption{\label{fig:ensembleLS}
      Reproduction of the ensemble experiment reported in Section
      \ref{sec:experiments} (Figure \ref{fig:ensemble}), with the scheme
      \eqref{eq:constrained-steepest} used instead of fixed step size gradient
      descent. The cdf for fixed step size $\eta = 0.25$ is shown in gray for
      comparison.}
  \end{center}
\end{figure}

We believe that the rate guarantees of Theorem \ref{thm:main-result} apply
also to the step size choice~\eqref{eq:constrained-steepest}. To lend credence
to this hypothesis, we repeat the ensemble experiment detailed in
Section~\ref{sec:experiments} (Figure~\ref{fig:ensemble}), where we use the
step size~\eqref{eq:constrained-steepest} instead of the fixed step size.
Figure~\ref{fig:ensembleLS} shows that the rates we prove in 
Section~\ref{sec:mainresult} seem to accurately describe the 
behavior of guarded steepest descent as well, with constant factors.

We remark that we introduce the upper
constraint~\eqref{eq:constrained-steepest} only because we require it in the
proof of Proposition~\ref{prop:linesearch}. Empirically, a scheme with the
simpler constraint $C_t = [0, \infty)$ appears to converge to the global
minimum as well, though we remain unable to prove this. While such step size
can differ from the choice~\eqref{eq:constrained-steepest} (see time $t = 4$
in Fig.~\ref{fig:steepest-fail}), the variants seem equally practicable.
Indeed, we performed the ensemble experiment
(Figs.~\ref{fig:ensemble} and~\ref{fig:ensembleLS}) with $C_t = [0, \infty)$
and the results are
indistinguishable.
\section{Application: A Hessian-free majorization method}
\label{sec:trustregion}

In this section we use our main results to analyze a simple optimization 
scheme that approximates the cubic-regularized Newton steps with 
gradient descent.  We expect more elaborate schemes to be more efficient 
in practice, as the current procedure is both simplified and uses gradient 
descent rather than Krylov subspace methods (see the introduction). We 
believe that our analysis extends to such practical schemes beyond the 
scope of this paper.

 We consider functions $g$ satisfying the
following %

\begin{assumption}
  \label{assu:g}

The function $g$ satisfies $\inf g \ge \glb > -\infty$, is 
$\beta$-smooth and 
has $2\rho$-Lipschitz Hessian, \ie$\|\hess g(y) - \hess g(y')\| \le 
2\rho \|y-y'\|$ for every $y,y'\in\R^d$.
\end{assumption}

The first two parts of Assumption~\ref{assu:g} (boundedness and 
smoothness) 
are standard.  The third implies that $g$ is upper bounded by its
cubic-regularized quadratic approximation~\cite[Lemma~1]{NesterovPo06}:
for $y, \Delta \in \R^d$ one has
\begin{equation}
  \label{eq:majorization}
  g(y + \Delta) \le g(y) + \grad g(y)^{\T}\Delta + \half \Delta^{\T} \hess g(y) \Delta
  + \frac{\rho}{3} \norm{\Delta}^3.
\end{equation}
For simplicity we assume that the constants $\beta$ and $\rho$ are 
known. From 
a theoretical perspective this is a benign assumption, as we may estimate 
these constants without significantly affecting the complexity  
bounds~\cite{Nesterov07}. In practice, however, careful adaptive estimation 
of $\rho$ is 
crucial for good performance; this is a primary strength of the ARC 
method~\cite{CartisGoTo11}.

Following ~\cite{NesterovPo06, GeHuJiYu15}, our goal is to find an
$\epsilon$-second-order stationary point $y_\epsilon$:
\begin{equation}\label{eq:second-order-crit}
  \norm{\grad g(y_\epsilon)}
  \le \epsilon ~~\textrm{and} ~~
  \hess g(y_\epsilon) \succeq -\sqrt{\rho\epsilon}I.
\end{equation} 
Intuitively, $\epsilon$-second-order stationary points provide a finer 
approximation to local minima than $\epsilon$-stationary points (with only 
$\norm{\grad g(y)} \le \epsilon$).
Throughout this section,
we use $\epsilon$ to denote approximate stationarity in $g$, and continue to
use $\eps$ to denote approximate optimality for subproblems of the form
\eqref{eq:problem}.%

We outline a majorization-minimization
strategy~\cite{ConnGoTo00,NocedalWr06} for optimization of $g$
in Algorithm~\ref{alg:SP}.
At each iteration, the
method approximately minimizes a local model of $g$,
halting once progress decreasing $g$ falls below a certain threshold.  
In Algorithm~\ref{alg:SSP}, we describe a simple Hessian-free subproblem 
solver that uses gradient descent 
with a small perturbation to the linear term and \emph{fixed step size} (as 
in 
Theorem~\ref{thm:pert-main-result}); we write the method in terms of an
input matrix $A = \nabla^2 g(y)$, noting that it requires only
matrix-vector products $Av$ implementable by a first-order oracle for $g$.

The method \callSSP{} takes as input a problem instance $(A,b,\rho)$,
confidence level $\delta$, relative accuracy $\eps'$, and a threshold for the 
magnitude of the global minimizer $\s$, which we denote by
$\slb$. As an immediate consequence of Theorem~\ref{thm:pert-main-result}, 
as
long as $\|\s\|\ge\slb$ the method is guaranteed to terminate before reaching
line~\ref{line:SSPfail}, and if the gradient is sufficiently large,
termination occurs before entering the loop. We formalize this in the
following lemma, whose proof we provide in Appendix~\ref{sec:proof-SSP}.

\begin{algorithm}[t]
  \caption{A second-order majorization method}\label{alg:SP}
  \begin{algorithmic}[1]
    \Function{\SP}{$y_0$, $g$, $\beta$, $\rho$, $\epsilon$, 
    $\delta$}\label{func:SP}
    \State Set $\Kprog = 1/324$ and $\eta  = 1/(10\beta)$%
    \For {$k = 1, 2, \ldots$} \Comment{guaranteed to terminate in at most  
    $O(\epsilon^{-3/2})$ iterations}
    \State $\Delta_k \gets \callSSP{\hess g(y_{k-1}),\  \grad g(y_{k-1}),\  \rho,\  
      \eta,\   \sqrt{\frac{\epsilon}{9\rho}},\ \half,\ \frac{\delta}{2k^2}}$
    \If{ $g(y_{k-1} + \Delta_k) \le g(y_k) - \Kprog \epsilon^{3/2} 
    \rho^{-1/2}$}\label{line:progress}
    \State $y_k \gets y_{k-1} + \Delta_k$
    \Else
    \State $\Delta_k \gets \callSFSP{\hess g(y_{k-1}),\  \grad g(y_{k-1}),\  
    \rho,\  \eta,\  \frac{\epsilon}{2}}$
    \State \Return{$y_{k-1} + \Delta_k$}
    \EndIf
    \EndFor
    \EndFunction
  \end{algorithmic}
\end{algorithm}

\begin{algorithm}[t]
	\caption{A Hessian-free subproblem solver}\label{alg:SSP}
	\begin{algorithmic}[1]
		\Function{\SSP}{$A$, $b$, $\rho$, $\eta$, $\slb$, $\eps'$, 
		$\delta$}\label{func:SSP}
		\State Set $f(x) = (1/2)x^{\T}Ax + b^{\T}x +(\rho/3)\|x\|^3$
                and $x_0= \callCP{A, b, \rho}$ \label{line:x0}
		\If{$f(x_0) \le -(1-\eps')\rho \slb^3/6$}\label{line:x0if}
		\Return{$x_0$} \EndIf
		\State Set \label{line:T}
		\[
		T = 
		\frac{480}{\eta\rho\slb\eps'}\bigg[
                6\log\bigg(1+\frac{\sqrt{d}}{\delta}\bigg)
                + 
		44\log\left(\frac{6}{\eta\rho\slb\eps'}\right)\bigg]
		\]
		\State Set $\sigma = \frac{\rho^3 \slb^4 (\eps')^2}{2(120\beta+240\rho\slb)^2}$, 
		draw 
		$q$ uniformly from 
		the unit sphere, set $\tilde{b} = b + \sigma q$
		\State Set $\tilde{f}(x) = (1/2)x^{\T}Ax + \tilde{b}^{\T}x +(\rho/3)\|x\|^3$
                and $\tilde{x}_0= \callCP{A, \tilde{b}, \rho}$
		
		\For{$t = 1, 2, \ldots, T$}
		\State $\tilde{x}_t \gets \tilde{x}_{t-1} - \eta \grad \tilde{f}(\tilde{x}_{t-1})$
		\If{$f(\tilde{x}_t) \le -(1-\eps')\rho \slb^3/6$} 
		\Return{$\tilde{x}_t$}
		\EndIf
		\EndFor
		\State \Return{$\tilde{x}_t$}\label{line:SSPfail}
		\EndFunction
	\end{algorithmic}
	\begin{algorithmic}[1]
	    \Function{\SFSP}{$A$, $b$, $\rho$, $\eta$, 
	    $\epsgrad$}\label{func:SFSP}
		\State Set $f(x) = (1/2)x^{\T}Ax + b^{\T}x +(\rho/3)\|x\|^3$
		and $\Delta = \callCP{A, b, \rho}$
		\While{$\norm{\nabla f(\Delta)} > \epsgrad$} \label{line:sfsp-while}
		$\Delta \gets \Delta - \eta \nabla f(\Delta)$
		\EndWhile
		\State \Return{$\Delta$}
		\EndFunction
	\end{algorithmic}
	\begin{algorithmic}[1]
		\Function{\CP}{$A$, $b$, $\rho$}\label{func:CP}
		\vspace{-0.2cm}
		\State \Return $-R_c b /\norm{b}$ where $R_{c} =   
		\frac{-b^{\T}Ab}{2\rho\| b\| ^{2}}+\sqrt{\left(\frac{b^{\T}Ab}{2\rho\| b\| 
		^{2}}\right)^{2}+\frac{\| b\| }{\rho}}$
		\EndFunction
	\end{algorithmic}
\end{algorithm}

\begin{restatable}{lemma}{lemSSP}\label{lem:SSP}
  Let $A \in \R^{d\times d}$ satisfy $\opnorm{A} \le \beta$, $b\in\R^d$, 
  $\rho > 0, 
  \slb>0$, $ \eps' \in (0, 1), \delta \in (0,1)$ and $\eta \le 1/(8\beta + 
  4\rho\slb)$. With 
  probability at least $1-\delta$, if
  \begin{equation*}
    \|\s\| \ge \slb \textrm{ or } \|b\| \ge \max\{\sqrt{\beta\rho}\slb^{3/2}, 
    \rho\slb^2\}
  \end{equation*}
  then $x=\text{\callSSP{$A$, $b$, $\rho$, $\eta$, $\slb$, $\eps'$, 
      $\delta$}}$ satisfies 
  $f(x) \le (1-\eps')\rho\slb^3/6$.
\end{restatable}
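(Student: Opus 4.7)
The argument splits along the two branches of the hypothesis disjunction.

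\emph{Stage 1 (large $\|b\|$ branch).} From $f(x_0) = -\tfrac12\|b\|R_c - \tfrac{\rho}{6}R_c^3$ I get $|f(x_0)| \ge \tfrac12\|b\|R_c$, so it suffices to lower bound $\|b\|R_c$. The defining quadratic $\rho R_c^2 + (b^{\T}Ab/\|b\|^2) R_c = \|b\|$ together with $|b^{\T}Ab|/\|b\|^2 \le \beta$ yields $R_c \ge 2\|b\|/(\beta + \sqrt{\beta^2+4\rho\|b\|})$. A case split at the threshold $\|b\| = \beta^2/(4\rho)$ gives $R_c \gtrsim \|b\|/\beta$ below it and $R_c \gtrsim \sqrt{\|b\|/\rho}$ above it, so $\|b\|R_c \ge \min\{\|b\|^2/\beta,\;\|b\|^{3/2}/(3\sqrt{\rho})\}$. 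The hypotheses $\|b\| \ge \sqrt{\beta\rho}\slb^{3/2}$ and $\|b\| \ge \rho\slb^2$ are tailored precisely so that the first and second bounds respectively give $\|b\|R_c \ge (1-\eps')\rho\slb^3/3$, hence $|f(x_0)| \ge (1-\eps')\rho\slb^3/6$, and the check on line~\ref{line:x0if} succeeds deterministically for every $\eps'\in(0,1)$.

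\emph{Stage 2 (the case $\|s\|\ge\slb$).} I apply Theorem~\ref{thm:pert-main-result} to $\tilde f$ with the adaptive choice
\[
\eps = \frac{\rho}{12}\bigl(\|s\|^3 - (1-\eps')\slb^3\bigr),
\]
so that the effective $\sigbar = 12\sigma(\beta+2\rs)/(\rho\eps)$ (where $\sigma$ is the algorithm's fixed perturbation level) equals exactly $1$ at $\|s\|=\slb$ and decays like $\slb^2/\|s\|^2$ as $\|s\|$ grows, so $\sigbar \le 1$ throughout. With $f(s) \le -\rho\|s\|^3/6$ from~\eqref{eq:fs-lower-bound}, the theorem's guarantee $f(\tilde x_t) \le f(s) + (1+\sigbar)\eps$ reduces the target $f(\tilde x_t) \le -(1-\eps')\rho\slb^3/6$ to the algebraic condition $(1-\sigbar)(\|s\|^3 - (1-\eps')\slb^3) \ge 0$, which is nonnegative on both factors. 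Thus within $\tilde T_\eps$ steps the inner check triggers a return with probability at least $1-\delta$.

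\emph{Matching $\tilde T_\eps \le T$---the main obstacle.} The key point is that because $\eps$ scales with $\|s\|^3$, the factor $10\|s\|^2/\eps = 120 t^2/[\rho\slb(t^3 -1 + \eps')]$ (with $t = \|s\|/\slb \ge 1$) is decreasing in $t$ and hence bounded by its value at $t=1$, namely $120/(\rho\slb\eps')$. The same trick bounds $(\beta+2\rs)\|s\|^2/\eps$ by $12(\beta+2\rho\slb)/(\rho\slb\eps')$; combined with $\eta \le 1/(8\beta+4\rho\slb)$ (which yields $\beta+2\rho\slb \le 1/(2\eta)$) this gives $\tiltconv(\eps) \le 14\log(6/(\eta\rho\slb\eps'))$. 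For $\tiltgrow$, one notes that Cauchy failure (which must hold whenever we enter the loop) forces $\|b\| \lesssim \rho\slb^2 + \beta\slb$ via reversing the argument of Stage~1, so the a~priori bound $\|s\| \le \beta/(2\rho) + \sqrt{(\beta/(2\rho))^2 + \|b\|/\rho}$ gives $\|s\|/\slb = O(1/(\eta\rho\slb))$; this in turn controls $1/\sigbar \lesssim 1/((\eta\rho\slb)^3\eps')$, contributing only $O(\log(1/(\eta\rho\slb\eps')))$ to $\tiltgrow$ beyond the $6\log(1+\sqrt d/\delta)$ dimensional term. Summing matches the bracketed factor $6\log(1+\sqrt d/\delta) + 32\log(6/(\eta\rho\slb\eps'))$ in $T$. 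The essential difficulty is precisely this tight iteration-count matching: a naive fixed $\eps = \rho\slb^3\eps'/12$ would make $\tilde T_\eps$ blow up with $\|s\|$, and only the adaptive cubic choice above keeps every term of $\tilde T_\eps$ within $T$ uniformly over the regime $\|s\| \ge \slb$.
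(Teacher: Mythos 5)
Your overall strategy is the same as the paper's: split the disjunction into two cases, handle the large-$\norm{b}$ case via the Cauchy point, and handle the $\norm{s} \ge \slb$ case by applying Theorem~\ref{thm:pert-main-result} with an $\eps$ that depends on $\norm{s}$, then use the fact that the Cauchy check failed (so $R_c < \slb$) to bound $\norm{s}/\slb$ by $1/(\eta\rho\slb)$ and control the iteration count. Stage~1 is essentially identical to the paper's argument, which uses the equivalent bound $R_c \ge \frac{1}{3}(\norm{b}/\beta \wedge \sqrt{\norm{b}/\rho})$ and notes that $(1-\eps') < 1$, so the cleaner sufficient condition $R_c\norm{b} \ge \rho\slb^3/3$ suffices.

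The one genuine divergence is your adaptive choice $\eps = \tfrac{\rho}{12}(\norm{s}^3 - (1-\eps')\slb^3)$. The paper simply takes $\eps = \rho\norm{s}^3\eps'/12$, which is also cubic in $\norm{s}$ and already collapses $10\nss/\eps$ to $120/(\rho\norm{s}\eps') \le 120/(\rho\slb\eps')$ and $\tiltconv$ to $14\log(6/(\eta\rho\norm{s}\eps'))$. So your claim at the end that ``only the adaptive cubic choice above'' can keep $\tilde T_\eps \le T$ is an overstatement: any $\eps \propto \norm{s}^3$ does the job, and the paper's simpler choice gives a strictly cleaner calculation. In fact the paper's choice is marginally better for the $\tiltgrow$ term: with $\eps = \rho\norm{s}^3\eps'/12$ one gets exactly $\sigbar = \frac{\beta+2\rs}{\beta+2\rho\slb}\cdot\frac{\slb^3}{\norm{s}^3} \ge (\slb/\norm{s})^3$, so $1/\sigbar \le (\norm{s}/\slb)^3 \le (\eta\rho\slb)^{-3}$ with no spurious $1/\eps'$ factor, whereas your $\sigbar$ obeys only $1/\sigbar \lesssim (\eta\rho\slb)^{-3}/\eps'$, which inflates $\tiltgrow$ by an extra $6\log(1/\eps')$. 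That pushes your constant from $18+14=32$ (the algorithm's explicit $T$ on line~\ref{line:T}) to roughly $38$, so your bound does not quite fit inside the stated $T$ without further slack. The rest of the argument (verifying Assumptions~\ref{assu:step-size} and~\ref{assu:init} from the Cauchy failure, $\sigbar \le 1$, and concluding $f(\tilde x_t) \le -(1-\eps')\rho\slb^3/6$ using $f(s)\le -\rho\norm{s}^3/6$) is correct and matches the paper.
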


Let $\siter_k$ be the global minimizer (in $\Delta$) of the
model~\eqref{eq:majorization} at $y=y_k$, the $k$th iterate of
Algorithm~\ref{alg:SP}. Lemma~\ref{lem:SSP} guarantees that with high
probability, if \callSSP{} fails to meet the progress
condition in line~\ref{line:progress}  at iteration $k$, then $\|\siter_k\| \le
\sqrt{\epsilon/(9\rho)}$, and therefore $\lambda^{(1)}(\hess g(y_k)) \ge
-\rho \|\siter_k\| \ge -\sqrt{\rho\epsilon}$. 
It is possible, nonetheless, that $\norm{\grad g(y_k)} > \epsilon$;
to address this, we correctively apply gradient descent on the final 
subproblem (\callSFSP{}).

Building off of an argument
of \citet[Lemma 5]{NesterovPo06}, we obtain the following guarantee
for Algorithm~\ref{alg:SP}, whose proof we provide in
Appendix~\ref{sec:proof-SP}.

\begin{restatable}{proposition}{propSP}\label{prop:SP}
  Let $g$ satisfy Assumption \ref{assu:g},
  $y_0\in\R^d$ be arbitrary, and let
  $\delta \in (0, 1]$ and $\epsilon \le 
  \min\{\beta^2/\rho,\rho^{1/3}(g(y_0)-\glb)^{2/3}\}$. With
  probability at least $1-\delta$, Algorithm \ref{alg:SP} finds an
  $\epsilon$-second-order stationary point~\eqref{eq:second-order-crit} in at
  most 
  \begin{equation}
    O(1) \cdot \frac{\beta(g(y_0)-\glb)}{\epsilon^{2}}
    \log\left(\frac{d}{\delta}\cdot\frac{\beta(g(y_0)-\glb)}{\epsilon^{2}}\right)
     \label{eq:trustregion-complexity}
  \end{equation}
  Hessian-vector product evaluations, and at most
  \begin{equation*}
  O(1) \cdot \frac{\sqrt{\rho}(g(y_0)-\glb)}{\epsilon^{3/2}}
  \end{equation*}
  calls to \callSSP{} and gradient evaluations.
\end{restatable}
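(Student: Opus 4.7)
I would separate the argument into correctness and complexity, using Lemma~\ref{lem:SSP} as the workhorse for both. All randomness lives in \SSP{}, which is invoked at most $\Kmax$ times, so a union bound with $\delta' = \delta/\Kmax$ (as on line~\ref{line:delta-prim-def}) gives a $1-\delta$-probability event on which every \SSP{} call satisfies its guarantee. Condition on this event henceforth, and for each outer iteration $k$ let $s_k$ denote the exact global minimizer of the cubic subproblem passed to \SSP{} at $y_{k-1}$. The constants $\slb = \sqrt{\epsilon/(9\rho)}$, $\eps' = 1/2$, and $\Kprog = 1/324$ are tuned so that the SSP guarantee $f(\Delta_k) \le -(1-\eps')\rho\slb^3/6 = -\epsilon^{3/2}/(324\sqrt{\rho})$ together with the cubic majorization inequality~\eqref{eq:majorization} gives $g(y_{k-1} + \Delta_k) \le g(y_{k-1}) + f(\Delta_k) \le g(y_{k-1}) - \Kprog\epsilon^{3/2}/\sqrt{\rho}$, exactly the progress test on line~\ref{line:progress}. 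Since $g \ge \glb$, the test can pass at most $\Kmax$ times, so some $k^* \le \Kmax$ fails it; by the contrapositive of Lemma~\ref{lem:SSP}, at $y_{k^*-1}$ we must have both $\|s_{k^*}\| < \slb$ and $\|\grad g(y_{k^*-1})\| < \max\{\sqrt{\beta\rho}\slb^{3/2},\rho\slb^2\}$.

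To verify the second-order certificate~\eqref{eq:second-order-crit} at the returned point $y_{k^*-1} + \Delta$, where $\Delta$ is the output of \SFSP{}, I would first use $\|s_{k^*}\| < \slb$ together with the optimality condition~\eqref{eqn:optimality} to get $\lambda\supind{1}(\hess g(y_{k^*-1})) \ge -\rho\|s_{k^*}\| > -\rho\slb$. Corollary~\ref{corollary:monotone} applied inside \SFSP{} yields $\|\Delta\| \le \|s_{k^*}\| < \slb$, so Hessian Lipschitzness gives $\hess g(y_{k^*-1} + \Delta) \succeq \hess g(y_{k^*-1}) - 2\rho\|\Delta\|I \succeq -3\rho\slb\,I = -\sqrt{\rho\epsilon}\,I$. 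For the gradient, I would expand $\grad g(y_{k^*-1}+\Delta) - \grad g(y_{k^*-1}) - \hess g(y_{k^*-1})\Delta = \int_0^1 [\hess g(y_{k^*-1}+t\Delta) - \hess g(y_{k^*-1})]\Delta\,dt$, whose norm is at most $\rho\|\Delta\|^2$, and combine with $\grad f(\Delta) = \grad g(y_{k^*-1}) + \hess g(y_{k^*-1})\Delta + \rho\|\Delta\|\Delta$ and the \SFSP{} termination $\|\grad f(\Delta)\| \le \epsilon/2$ to obtain $\|\grad g(y_{k^*-1}+\Delta)\| \le \|\grad f(\Delta)\| + 2\rho\|\Delta\|^2 \le \epsilon/2 + 2\epsilon/9 < \epsilon$.

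For the complexity bound~\eqref{eq:trustregion-complexity}, substituting $\slb,\eps',\eta$ into line~\ref{line:T} gives $T = O(\beta/\sqrt{\rho\epsilon})\log(\sqrt{d}\beta/(\delta'\sqrt{\rho\epsilon}))$ gradient/Hessian-vector products per \SSP{} call. Multiplying by $\Kmax = O(\sqrt{\rho}(g(y_0)-\glb)/\epsilon^{3/2})$ and absorbing $\log\Kmax$ into a single log factor produces exactly~\eqref{eq:trustregion-complexity} for the total \SSP{} work. The one \SFSP{} call contributes at most $O(\beta/\sqrt{\rho\epsilon})$ iterations: the standard smoothness descent inequality $f(x_{t+1}) \le f(x_t) - (\eta/2)\|\grad f(x_t)\|^2$ shows that if $\|\grad f(x_t)\| > \epsilon/2$ for $T$ consecutive steps, then $f(x_0) - f(s_{k^*}) \ge T\eta\epsilon^2/8$; combining with $f(x_0) \le 0$ (Cauchy point) and $-f(s_{k^*}) = O(\rho\|s_{k^*}\|^3) = O(\rho\slb^3)$ gives $T = O(\beta/\sqrt{\rho\epsilon})$, subdominant to the \SSP{} total.

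The main obstacle will be the \SFSP{} iteration bound, since by construction \SFSP{} is invoked precisely at subproblems where \SSP{}'s fast rate could not be certified, so one cannot hope to plug directly into Theorem~\ref{thm:main-result}. The saving grace is that at that point $\|s_{k^*}\|$ is already known to be small (at most $\slb$), which forces $f(x_0) - f(s_{k^*})$ to be bounded by a small polynomial in $\slb$ and lets the plain smoothness descent lemma deliver the iteration count without any further nonconvex analysis. A secondary bookkeeping step is checking that the assumption $\epsilon \le \min\{\beta^2/\rho,\rho^{1/3}\}$ suffices to fold every residual $\log(\beta/\sqrt{\rho\epsilon})$ and $\log\Kmax$ contribution into the single logarithm in~\eqref{eq:trustregion-complexity}.
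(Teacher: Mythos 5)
Your overall structure matches the paper's almost step for step: union bound with $\delta'=\delta/\Kmax$, the progress/$\Kmax$ bound, contrapositive of Lemma~\ref{lem:SSP} to get $\|s_{k^*}\|<\slb$, the chain $\hess g(\yout)\succeq -\rho\|s_{k^*}\|I - 2\rho\|\Delta\|I\succeq-\sqrt{\rho\epsilon}I$ via Corollary~\ref{corollary:monotone}, and the gradient bound $\epsilon/2+2\epsilon/9<\epsilon$. The \callSSP{} complexity accounting is also the paper's.

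There is, however, a genuine error in your \callSFSP{} iteration bound. You claim $-f(s_{k^*}) = O(\rho\slb^3)$, but from the identity~\eqref{eq:fs-lower-bound}, $-f(s) = \tfrac{1}{2}s^{\T}A_s s + \tfrac{\rho}{6}\|s\|^3$, and $s^{\T}A_s s = -b^{\T}s \le \|b\|\|s\|$ need not be $O(\rho\|s\|^3)$. Even using the contrapositive bound $\|b\|<\max\{\sqrt{\beta\rho}\slb^{3/2},\rho\slb^2\}$, the cross term $\tfrac{1}{2}\|b\|\|s\|$ is of order $\sqrt{\beta\rho}\slb^{5/2}$, which exceeds $\rho\slb^3$ by a factor $\sqrt{\beta/(\rho\slb)}\sim(\beta^2/(\rho\epsilon))^{1/4}\ge 1$ in the regime $\epsilon\le\beta^2/\rho$; so the cross term dominates and your estimate fails. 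Consequently the claimed $O(\beta/\sqrt{\rho\epsilon})$ count for \callSFSP{} does not follow, and neither does your conclusion that it is ``subdominant.'' The paper instead bounds $f_{\kf}(x_0)-f_{\kf}(s_\kf)\le g(y_0)-\glb$ directly: since the cubic model majorizes $g(y_{\kf-1}+\cdot)-g(y_{\kf-1})$ by~\eqref{eq:majorization} and $g\ge\glb$, one has $f_{\kf}(\Delta)\ge\glb-g(y_{\kf-1})\ge\glb-g(y_0)$ for all $\Delta$, while $f_{\kf}(x_0)\le f_\kf(0)=0$. This yields $O(\beta(g(y_0)-\glb)\epsilon^{-2})$ iterations for \callSFSP{}, which is \emph{of the same order} as the total \callSSP{} work rather than subdominant; it is absorbed into~\eqref{eq:trustregion-complexity} all the same, so the final statement is safe, but your route to it needs to be replaced by this majorization argument.

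One minor omission: you should verify explicitly, as the paper does, that $\eta=1/(10\beta)$ satisfies the step-size hypotheses of Lemma~\ref{lem:SSP} (i.e.\ $\eta\le 1/(8\beta+4\rho\slb)$, which uses $\epsilon\le\beta^2/\rho$) and of Corollary~\ref{corollary:monotone} inside \callSFSP{}; you invoke these implicitly via ``the constants are tuned,'' which is too loose for a complete proof.
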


In Proposition~\ref{prop:SP}, the assumption $\epsilon \le \beta^2/\rho$ is no
loss of generality, as otherwise the Hessian
guarantee~\eqref{eq:second-order-crit} is trivial, and we may obtain the
gradient guarantee by simply running gradient descent on $g$ for
$2\beta(g(y_0)-\glb)\epsilon^{-2}$ iterations. Similarly, if
$\epsilon > \rho^{1/3}(g(y_0)-\glb)^{2/3}$ then we require at most 
$1+1/\Kprog=325$ calls to \callSSP{}, and the proof
of Proposition~\ref{prop:SP} reveals that the overall first-order complexity  
scales
as $\epsilon^{-1/2}$ instead of $\epsilon^{-2}$.

There are other Hessian-free methods that provide the 
guarantee~\eqref{eq:second-order-crit}, and recent schemes 
using acceleration techniques~\cite{AgarwalAlBuHaMa17, 
CarmonDuHiSi18} %
provide it in roughly $\epsilon^{-7/4} \log \frac{d}{\delta}$ first-order 
operations,
which is better than 
Algorithm~\ref{alg:SP}. Nevertheless, this section illustrates how gradient 
descent on the structured problem~\eqref{eq:problem} can be 
straightforwardly leveraged to optimize general smooth non-convex functions.

\section{Discussion}\label{sec:discussion}

Our results have a number of connections to rates of convergence in classical 
(smooth) convex
optimization and the power method for symmetric eigenvector
computation; here, we explore these in more detail.

\subsection{Comparison with convex optimization}

For $L$-smooth $\alpha$-strongly convex functions,  
gradient descent finds an $\eps$-suboptimal point within 
\begin{equation*}
  O(1) \cdot \min\left\{ \frac{L}{\alpha}\log \frac{L D^2}{\eps},
    \frac{L D^2}{\eps} \right\}
\end{equation*}
iterations~\cite{Nesterov04}, where $D$ is any constant
$D \ge \norm{x_0 - x^\star}$ and $x^\star$ a global minimizer. 
For our (possibly non-convex) problem~\eqref{eq:problem},
Corollary~\ref{cor:main-result} guarantees that
gradient descent finds an
$\eps$-suboptimal point (with probability at least $1 - \delta$) within
\begin{equation*}
  O(1) \cdot
  \min \left\{
    \frac{\Ls}{\rs - \gamma}, 
    \frac{\Ls \nss}{\eps} \right\}\left[\log\frac{\Ls \nss}{ \eps}
    + \log
    \left(1 + \I_{\{\gamma>0\}}\frac{ d}{\delta}\right)
  \right]
\end{equation*}
iterations, where $\Ls = \beta + 2\rs$. The parallels are immediate: by
Lemma~\ref{lemma:monotone}, $\Ls$ and $\norm{\s}$ are precise analogues
of $L$ and $D$ in the convex setting. Moreover, the quantity $\rs - \gamma$
plays the role of the strong convexity parameter $\alpha$, but it is
well-defined even when $f$ is not convex.
When $\lambda\supind{1}(A) = -\gamma \ge 0$, $f$ is $-\gamma$-strongly convex,
and because $\rs-\gamma > -\gamma$,
our analysis for the cubic problem~\eqref{eq:problem}
guarantees better conditioning than the generic convex result.
The difference between
$\rs-\gamma$ and $-\gamma$ becomes significant when $b$ is sufficiently large,
as we observe from the bounds~\eqref{eq:s-lower-R-def} 
and~\eqref{eq:Rc-def}. Even in the non-convex case that $\gamma > 0$,
gradient descent still
exhibits linear convergence whenever high accuracy is
desired, that is, when $\eps/\nss \le \rs-\gamma$.

When $\gamma > 0$, our guarantee becomes probabilistic and contains a
$\log(d/\delta)$ term. Such a term does not appear in results on convex
optimization~\cite{Nesterov04}, and it is fundamentally related to the
presence of saddle-points in the objective~\cite{SimchowitzAlRe18}.

\subsection{Comparison with the power method}

The power method for 
finding the
smallest eigenvector of $A$ is the recursion $x_{t+1} = (I-
(1/\beta) A)x_t/\|(I-(1/\beta) A)x_t\|$ where $x_0$ is uniform on the unit
sphere  \cite{KuczynskiWo92, MuscoMu15}. This method guarantees that 
with probability at least $1-\delta$,
$x_t^{T}Ax_t \le -\gamma + \eps$ for all
\begin{equation*}
t \ge O(1)\cdot\min\left\{ \frac{\beta}{\eps}\log\left(\frac{d}{\delta}\right) ,
\frac{\beta}{\gap}\log\left(\frac{\beta}{\eps}\cdot\frac{d}{\delta}\right)\right\}.
\end{equation*}

When $b = 0$ and $\lambda\supind{1}(A) = -\gamma < 0$, any
global minimizer of problem~\eqref{eq:problem} is
an eigenvector of $A$ with eigenvalue 
$-\gamma$ and $\rs = \gamma$,
so it is natural to compare gradient descent and the power method.
For simplicity, let us assume 
that $\rho=\gamma$ so that $\norm{\s}=1$, and both methods converge 
to unit eigenvectors. Under these assumptions $f(x) = \half x^{\T}Ax + 
\frac{\gamma}{3}\norm{x}^3$ and $f(\s) = -\gamma/6$, so $f(x) \le f(\s) 
+ \eps'$ implies
\begin{equation*}
  \frac{x^{\T}Ax}{\norm{x}^2} \le -\frac{\gamma}{3}
  \left[\frac{1}{\norm{x}^2}+2\norm{x}\right] + \frac{2\eps'}{\norm{x}^2}
  \le -\gamma + \frac{2\eps'}{\norm{x}^2}.
\end{equation*}
Consider gradient descent applied to $f$ with a random perturbation as described in 
Theorem~\ref{thm:pert-main-result}, with $\sigbar=1$. Inspecting
the proofs of our theorems (Sec.~\ref{sec:proofs}), we see that
Lemmas~\ref{lem:growth} and \ref{lem:rand-prop} imply that
with probability at least $1 - \delta$ we have
$\norm{\tilde{x}_t}\ge1/2$  for every $t\ge O(1) 
\log (\frac{\Ls\nss}{\eps}\cdot\frac{d}{\delta})$. As
in Corollary~\ref{cor:main-result}, setting
$\eta=\frac{1}{4(\beta+\rho R)}=\frac{1}{8\beta}$  
guarantees
that with probability at least $1-\delta$, 
$\tilde{x}_t^{T}A\tilde{x}_t/\norm{\tilde{x}_t}^2 \le -\gamma + 
\eps$ for all
\begin{equation*}
t \ge O(1)\cdot\min\left\{ 
\frac{\beta}{\eps}\log\left(\frac{\beta}{\eps}\cdot\frac{d}{\delta}\right) ,
\frac{\beta}{\sqrt{\eps\min\{\gap,\gamma\}}}\log\left(\frac{\beta}{\eps}\cdot\frac{d}{\delta}\right)\right\}.
\end{equation*}

Comparing the rates of convergence, we see that both exhibit the
$\log (d/\delta)$ hallmark of non-convexity and gap-free and
gap-dependent convergence regimes. Of course, the power method also finds
eigenvectors when $\gamma < 0$, while the unique solution to
problem~\eqref{eq:problem} when $b = 0$ and $\gamma < 0$ is simply $\s = 
0$.
In the gap-dependent regime, however, the power method enjoys linear
convergence when $\eps < \gap$, while our bounds have a $1 / \sqrt{\eps}$ 
factor. Although this may be due to looseness in our analysis, we suspect 
it
is real and related to the fact that gradient descent needs to
``grow'' the iterates to have norm $\norm{x_t} \approx \gammaplus/\rho$, while
the power method iterates always have unit norm. If one is only interested in
finding eigenvectors of $A$, there is probably no reason to prefer the
cubic-regularized objective to the power method.

\section*{Acknowledgment}
We thank Ziyi Chen for pointing out an error in the previous version of part (iii) of Lemma \ref{lem:rand-prop}.
YC and JCD were partially supported by
the SAIL-Toyota Center for AI Research. YC was partially supported by 
the Stanford Graduate Fellowship and the Numerical Technologies  
Fellowship. JCD
was partially supported by
the National Science Foundation award NSF-CAREER-1553086.

\appendix

\toggletrue{restatements}

\part*{Appendix}
\restate{
For convenience of the
reader, throughout the appendix we restate lemmas that appeared in the 
main 
text prior to giving their proofs.
}

\section{Proof of Lemma~\ref{lem:monotone-weak}}
\label{app:prel}
Before proving Lemma~\ref{lem:monotone-weak}, we state and prove
two technical lemmas (see Sec.~\ref{sec:finally-proof-monotone-weak}
for the proof conditional on these lemmas).
For the first lemma,
let $\kappa \in \R^d$ satisfy $\kappa\supind{1} \le \kappa\supind{2}
\le \ldots \le \kappa\supind{d}$, let
$\nu_t$ be a nonnegative and nondecreasing sequence, $0 \le \nu_1 \le
\nu_2 \le \ldots$, and consider the process
\begin{equation}\label{eq:zt-def}
  z_{t}\supind{i}=(1-\kappa\supind{i}-\nu_{t-1})z_{t-1}\supind{i}+1.
\end{equation}
Additionally, assume
$1-\kappa\supind{i}-\nu_{t-1}\geq0$ for all $i$ and $t$.
\begin{lemma}
  \label{lem:ordering}
  Let $z_{0}\supind{i}=c_{0}\geq0$ for every $i \in [d]$. Then for
  every $t \in \N$ and $j \in [d]$, the following holds.
  \begin{enumerate}[(i)]
  \item \label{item:future-signs}
    If $z_{t}^{\left(j\right)}\leq z_{t-1}^{\left(j\right)}$ then also
    $z_{t'}^{\left(j\right)}\leq z_{t'-1}^{\left(j\right)}$ for every
    $t'>t$.
  \item \label{item:sign-ratio} If
    $z_{t}^{\left(j\right)}\geq z_{t-1}^{\left(j\right)}$, then
    $z_{t}^{\left(j\right)}/z_{t+1}^{\left(j\right)}\geq
    z_{t}\supind{i}/z_{t+1}\supind{i}$ for every $i\le j$.
  \item \label{item:bigger-signs}
    If $z_{t+1}\supind{i}\leq z_{t}\supind{i}$,
    then $z_{t+1}\supind{j}\leq z_{t}\supind{j}$
    for every $j\geq i$.
  \end{enumerate}
\end{lemma}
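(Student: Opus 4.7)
My plan is to prove the three parts in the order (i), (iii), (ii), each building on the previous. First, I observe that $z_t^{(j)} \geq 0$ for all $t$ (indeed $z_t^{(j)} \geq 1$ for $t \geq 1$), by a one-line induction on the recursion using that $1-\kappa^{(j)}-\nu_{t-1} \geq 0$. For part (i), I subtract two consecutive instances of the recursion to obtain the key identity
\[
z_{t+1}^{(j)} - z_t^{(j)} = (1-\kappa^{(j)}-\nu_t)(z_t^{(j)} - z_{t-1}^{(j)}) - (\nu_t - \nu_{t-1})\,z_{t-1}^{(j)}.
\]
Under the hypothesis $z_t^{(j)} \leq z_{t-1}^{(j)}$, both right-hand terms are nonpositive (the first because $1-\kappa^{(j)}-\nu_t \geq 0$, the second because $\nu_t \geq \nu_{t-1}$ and $z_{t-1}^{(j)} \geq 0$), so $z_{t+1}^{(j)} \leq z_t^{(j)}$; iterating yields (i) for all $t' > t$.

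For part (iii), I induct on $t$. The base case $t=0$ is direct: $z_1^{(i)} \leq z_0^{(i)}$ is equivalent to $c_0(\kappa^{(i)} + \nu_0) \geq 1$, which extends to $j \geq i$ since $\kappa^{(j)} \geq \kappa^{(i)}$. In the inductive step, given $z_{t+1}^{(i)} \leq z_t^{(i)}$, I split on whether $z_t^{(i)} \leq z_{t-1}^{(i)}$. If yes, the inductive hypothesis at time $t-1$ propagates the decrease to $z_t^{(j)} \leq z_{t-1}^{(j)}$, and part (i) finishes the argument. If no, the $i$-th sequence is just turning around at $t$; here I plan to argue that sequences with larger $\kappa$ turn around no later, by controlling the auxiliary quantity $(\kappa^{(i)} + \nu_t)\, z_t^{(i)}$---which exceeds $1$ exactly when the $i$-th sequence decreases at step $t$---as a function of the index. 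This leverages the spatial ordering $z_t^{(1)} \geq z_t^{(2)} \geq \cdots \geq z_t^{(d)}$, which follows from a separate easy induction on $t$.

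Part (ii) reduces, via the identity $z_{t+1}^{(i)}/z_t^{(i)} = (1-\kappa^{(i)}-\nu_t) + 1/z_t^{(i)}$, to proving
\[
(\kappa^{(j)} - \kappa^{(i)})\, z_t^{(i)} z_t^{(j)} \geq z_t^{(i)} - z_t^{(j)}.
\]
I establish this by a clean one-step induction on $t$: writing $a = 1-\kappa^{(i)}-\nu_{t-1}$ and $\delta = \kappa^{(j)}-\kappa^{(i)}$, the recursion gives $z_t^{(i)} - z_t^{(j)} = a(z_{t-1}^{(i)} - z_{t-1}^{(j)}) + \delta\, z_{t-1}^{(j)}$, and the inductive hypothesis bounds this by $\delta\, z_{t-1}^{(j)}(a\,z_{t-1}^{(i)} + 1) = \delta\, z_{t-1}^{(j)}\, z_t^{(i)}$. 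The desired bound then follows from $z_{t-1}^{(j)} \leq z_t^{(j)}$, which is exactly the hypothesis of (ii) propagated backward via the contrapositive of (i) (with the boundary case of equality handled by a small perturbation in $\kappa$).

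The main obstacle I expect is Case B of part (iii)---the scenario where the $i$-th sequence is just peaking at time $t$. The naive comparison fails because larger $\kappa^{(j)}$ corresponds to smaller $z_t^{(j)}$, so the two factors in $(\kappa^{(j)} + \nu_t)\,z_t^{(j)}$ pull in opposite directions as the index grows. The right inductive invariant capturing monotonicity of the turnaround time in $\kappa$---combining temporal and spatial ordering information---will require careful bookkeeping.
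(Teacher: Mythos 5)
Your proofs of parts (i) and (ii) essentially reproduce the paper's argument: the telescoped difference identity for part (i), and the induction on the invariant $z_t^{(i)}-z_t^{(j)}\le(\kappa^{(j)}-\kappa^{(i)})z_t^{(i)}z_t^{(j)}$ (with monotonicity of $z_\tau^{(j)}$ for $\tau\le t$ supplied by the contrapositive of part (i)) for part (ii). Those are correct and complete.

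The genuine gap is part (iii), and it is a gap you chose to create by ordering your argument as (i), (iii), (ii). Your proof of (ii) does not use (iii) at all---it only uses (i)---so there is no obstruction to proving (ii) first. Once (i) and (ii) are in hand, (iii) is immediate and your ``Case B'' obstacle disappears entirely. The trick you are missing is to case-split on the $j$-th sequence, not the $i$-th: if $z_t^{(j)}\le z_{t-1}^{(j)}$, part (i) applied at index $j$ directly gives $z_{t+1}^{(j)}\le z_t^{(j)}$; otherwise $z_t^{(j)}\ge z_{t-1}^{(j)}$, so the hypothesis of part (ii) is satisfied, yielding $z_t^{(j)}/z_{t+1}^{(j)}\ge z_t^{(i)}/z_{t+1}^{(i)}\ge 1$, where the last inequality is precisely the hypothesis of (iii). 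The difficulty you correctly diagnose in the direct approach---that $\kappa^{(j)}$ and $z_t^{(j)}$ pull in opposite directions in the product $(\kappa^{(j)}+\nu_t)z_t^{(j)}$---is exactly the tension that the ratio monotonicity of part (ii) resolves, and there is no need to re-derive that resolution from scratch inside (iii). As written, your proposal leaves Case B unresolved and therefore does not constitute a proof of (iii).
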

\begin{proof}
  For shorthand, we define
  $\delta_{t}\supind{i} \defeq \kappa\supind{i}+\nu_{t}$. 
  
  We first establish part~\eqref{item:future-signs} of the lemma. 
  By~\eqref{eq:zt-def}, we have
  \begin{alignat*}{1}
    z_{t+1}\supind{j} - z_{t}\supind{j}
    =(1-\delta_{t-1}^{\left(j\right)})(z_{t}^{\left(j\right)}-z_{t-1}^{\left(j\right)})- 
    (\delta_{t}^{\left(j\right)}-\delta_{t-1}^{\left(j\right)})z_{t}^{\left(j\right)}
  \end{alignat*}
  By our assumptions that $z_{0}\supind{j}\geq0$ and that
  $1-\delta_{t}\supind{j}\geq0$ for every $t$ we immediately have that
  $z_{t}\supind{j}\geq0$, and therefore also
  $(\delta_{t}\supind{j}-\delta_{t-1}\supind{j})z_{t}\supind{j} = 
  (\nu_{t}-\nu_{t-1})z_{t}\supind{j}\geq0$.
  We therefore conclude that
  \begin{equation*}
  z_{t+1}\supind{j}-z_{t}\supind{j}\leq (1-\delta_{t-1}\supind{j})  
  (z_{t}\supind{j}-z_{t-1}\supind{j} ) \le 0,
  \end{equation*}
  and induction gives part~\eqref{item:future-signs}.
  
  To establish part~\eqref{item:sign-ratio} of the lemma, first note
  that by the contrapositive of part~\eqref{item:future-signs},
  $z_{t}\supind{j}\geq z_{t-1}\supind{j}$
  for some $t$ implies $z_{t'}\supind{j}\geq z_{t'-1}\supind{j}$
  for any $t'\leq t$. We prove by induction that
  \begin{equation}
    z_{t'}\supind{i}-z_{t'}\supind{j}
    \leq (\kappa\supind{j}-\kappa\supind{i}) z_{t'}\supind{i}z_{t'}\supind{j}
    \label{eqn:kappa-z-inequality}
  \end{equation}
  for any $i\leq j$ and $t'\leq t$. The basis of the induction is
  immediate from the assumption $z_{0}\supind{i}=z_{0}\supind{j}\ge0$.
  Assuming the property holds through time $t' - 1$ for $t' \le t$, we
  obtain
  \begin{alignat*}{1}
    \frac{z_{t'}\supind{i}-z_{t'}\supind{j}}{z_{t'}\supind{i}z_{t'}\supind{j}}
    & =\frac{(1-\delta_{t'-1}\supind{i})(z_{t'-1}\supind{i}-z_{t'-1}\supind{j})+
      (\delta_{t'-1}\supind{j}-\delta_{t'-1}\supind{i})z_{t'-1}\supind{j}}{
      z_{t'}\supind{i}z_{t'}\supind{j}} \\
    & \le \frac{(1 - \delta_{t'-1}\supind{i}) (\kappa\supind{j} - \kappa\supind{i})
      z_{t'-1}\supind{i} z_{t'-1}\supind{j}}{z_{t'}\supind{i} z_{t'}\supind{j}}
    = (\kappa\supind{j} - \kappa\supind{i}) \frac{z_{t'-1}\supind{j}}{
      z_{t'}\supind{j}} \le \kappa\supind{j} - \kappa\supind{i}
  \end{alignat*}
  where the first inequality uses inequality~\eqref{eqn:kappa-z-inequality} 
  (assumed by induction)
  and the second uses $z_{t'-1}\supind{j} \le
  z_{t'}\supind{j}$ for any $t'\le t$, as argued above.
  With the bound
  $z_{t}\supind{i}-z_{t}\supind{j}\leq(\kappa\supind{j}-\kappa\supind{i})
  z_{t}\supind{i}z_{t}\supind{j}$
  in place, we may finish the proof of part~\eqref{item:sign-ratio} by noting that
  \begin{alignat*}{1}
    \frac{z_{t}\supind{j}}{z_{t+1}\supind{j}}-\frac{z_{t}\supind{i}}{z_{t+1}\supind{i}}
    & =\frac{z_{t+1}\supind{i}z_{t}\supind{j}-z_{t+1}\supind{j}z_{t}\supind{i}}{z_{t+1}\supind{j}z_{t+1}\supind{i}}
    =\frac{(\kappa\supind{j}-\kappa\supind{i})z_{t}\supind{i} 
    z_{t}\supind{j}-(z_{t}\supind{i}-z_{t}\supind{j})}{z_{t+1}\supind{j} 
    z_{t+1}\supind{i}}\geq0.
  \end{alignat*}

  Lastly, we prove part~\eqref{item:bigger-signs}. If
  $z_{t}\supind{j}\leq z_{t-1}\supind{j}$ then we have
  $z_{t+1}\supind{j}\leq z_{t}\supind{j}$ by
  part~\eqref{item:future-signs}. Otherwise we have
  $z_{t}\supind{j}\ge z_{t-1}\supind{j}$, and so
  $z_{t}\supind{j}/z_{t+1}\supind{j}\geq z_{t}\supind{i}/z_{t+1}\supind{i}$ by
  part~\eqref{item:sign-ratio}. As $z_{t+1}\supind{i}\leq z_{t}\supind{i}$,
  this implies
  $z_{t}\supind{j} / z_{t+1}\supind{j} \geq
  z_{t}\supind{i} / z_{t+1}\supind{i} \geq 1$
  and therefore $z_{t+1}\supind{j}\leq z_{t}\supind{j}$
  as required.
\end{proof}

Our second technical lemma provides a lower bound on certain
inner products in the gradient descent iterations.
In the lemma, we recall the definition~\eqref{eq:R-def} of
$R$.
\begin{lemma}
  \label{lemma:x-transpose-A-nice}
  Assume that $\norm{x_\tau}$ is non-decreasing in $\tau$ for
  $\tau \le t$, that $\norm{x_t} \le R$, and that
  $x_t^{\T}\nabla f(x_t) \le 0$. Then $x_t^{\T}A\nabla f(x_t) \ge \beta 
  x_t^{\T}\nabla 
  f(x_t)$.
\end{lemma}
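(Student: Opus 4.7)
The plan is to diagonalize $A$ so the inequality reduces to a scalar sum in the eigenbasis, and then to change variables to the auxiliary sequence of Lemma~\ref{lem:ordering} so that the signs of the summands become controllable through its monotonicity properties. Writing $\lambda_i := \lambda^{(i)}(A)$, the inequality to prove is equivalent to
\[
\sum_{i=1}^d (\lambda_i - \beta)\, x_t^{(i)} [\nabla f(x_t)]^{(i)} \ge 0,
\]
and since $\beta \ge \max_i |\lambda_i|$, the weights $\beta - \lambda_i \ge 0$ are non-increasing in $i$.

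For each coordinate with $b^{(i)} \ne 0$ I would set $z_\tau^{(i)} := -x_\tau^{(i)}/(\eta b^{(i)})$; the coordinate-wise update~\eqref{eq:coord-rec} then takes exactly the form $z_{\tau+1}^{(i)} = (1 - \kappa_i - \nu_\tau) z_\tau^{(i)} + 1$ of Lemma~\ref{lem:ordering} with $\kappa_i = \eta\lambda_i$ and $\nu_\tau = \eta\rho\norm{x_\tau}$. Assumption~\ref{assu:step-size} combined with the hypothesis $\norm{x_t} \le R$ forces $1 - \kappa_i - \nu_\tau \ge 1 - \eta(\beta + \rho R) > 0$; the assumed monotonicity of $\norm{x_\tau}$ makes $\nu_\tau$ non-decreasing; and $z_0^{(i)} = r/(\eta\norm{b}) \ge 0$ is a common non-negative constant across $i$ by Assumption~\ref{assu:init}, so all hypotheses of Lemma~\ref{lem:ordering} are in force. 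Coordinates with $b^{(i)} = 0$ satisfy $x_\tau^{(i)} \equiv 0$ and drop out of every sum below.

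Substituting $x_t^{(i)} = -\eta b^{(i)} z_t^{(i)}$ and $[\nabla f(x_t)]^{(i)} = b^{(i)}(z_{t+1}^{(i)} - z_t^{(i)})$, and writing $c_i := [b^{(i)}]^2 z_t^{(i)} \ge 0$ and $\Delta_i := z_{t+1}^{(i)} - z_t^{(i)}$, the inequality to prove becomes $\sum_i (\beta - \lambda_i)\, c_i\, \Delta_i \ge 0$, while the hypothesis $x_t^\top \nabla f(x_t) \le 0$ becomes $\sum_i c_i\, \Delta_i \ge 0$. I would then apply Lemma~\ref{lem:ordering}(iii): letting $i^*$ be the smallest index with $\Delta_{i^*} \le 0$ (or $i^* = d+1$ if no such index exists), part~(iii) delivers $\Delta_i > 0$ for $i < i^*$ and $\Delta_i \le 0$ for $i \ge i^*$. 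Because $\beta - \lambda_i \ge \beta - \lambda_{i^*}$ exactly when $i \le i^*$, the sign patterns align so that $(\beta - \lambda_i)\, c_i\, \Delta_i \ge (\beta - \lambda_{i^*})\, c_i\, \Delta_i$ for every $i$; summing and using $\beta - \lambda_{i^*} \ge 0$ together with the hypothesis sum yields the desired bound.

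The main obstacle is precisely this final re-weighting step: the scalar bound $\sum_i c_i \Delta_i \ge 0$ does not on its own imply any non-uniformly weighted analogue. What rescues the argument is the monotone switching of $\Delta_i$'s sign across eigencoordinates supplied by Lemma~\ref{lem:ordering}(iii), which is aligned with the monotone weights $\beta - \lambda_i$ in $i$. Everything else — verifying the hypotheses of Lemma~\ref{lem:ordering}, checking positivity of $z_t^{(i)}$ via the recursion, and handling the boundary cases $i^* \in \{1, d+1\}$ — is direct bookkeeping.
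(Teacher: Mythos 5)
Your proposal is correct and follows essentially the same route as the paper: the same change of variables to the $z$-process, the same appeal to Lemma~\ref{lem:ordering}(iii) to get the monotone sign switch across eigencoordinates, and the same rearrangement-style re-weighting against a pivot eigenvalue (the paper phrases it as splitting the sum into two blocks and bounding each by $\lambda^{(i^*)}$ and $\lambda^{(i^*+1)}$, which is equivalent to your pointwise pivot inequality). The only differences are notational (your $i^*$ is shifted by one relative to the paper's, and you work with $\beta - \lambda_i$ rather than $\lambda_i$ directly).
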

\begin{proof}
  If we define
  $z_t \supind{i}=x_t \supind{i}/(-\eta b\supind{i})$, then
  evidently
  \begin{equation*}
    z_{t + 1}\supind{i}=(1-\underset{\triangleq\kappa\supind{i}}{\underbrace{\eta\lambda\supind{i}\left(A\right)}}-\underset{\triangleq\nu_{t}}{\underbrace{\eta\rho\norm{ x_{t}} }})z_{t}\supind{i}+1.
  \end{equation*}
  We verify that $z_t\supind{i}$ satisfies
  the conditions of Lemma~\ref{lem:ordering}:
  \begin{enumerate}[i.]
  \item By definition $\kappa\supind{i}$ are increasing in $i$, and
    $\nu_{0}\leq\nu_{1}\leq\cdots\leq\nu_{t}$ by our assumption
    that $\norm{ x_\tau} $ is non-decreasing for $\tau \le t$.
  \item As
    $\eta\leq1/\left(\beta+\rho R\right)$
    for $\tau \leq t$, we have that $\kappa\supind{i}+\nu_\tau \leq1$ for
    $\tau \le t$ and $i \in [d]$.
  \item As $x_{0}=-r b/\norm{ b}$, $z_{0}\supind{i}=r/(\eta\norm{ b}) \geq0$
    for every $i$.
  \end{enumerate}
  We may therefore apply Lemma~\ref{lem:ordering},
  part~\eqref{item:bigger-signs} to conclude that
  $z_{t}\supind{i}-z_{t + 1}\supind{i}\geq0$ implies
  $z_{t}\supind{j}-z_{t + 1}\supind{j}\geq0$ for every $j\geq i$. Since
  $z_{t}\supind{i}\geq0$ for every $i$,
  \begin{equation*}
  \mathrm{sign}\left(x_{t}\supind{i}\left(x_{t}\supind{i}-x_{t + 1}\supind{i}\right)\right)=\mathrm{sign}\left(z_{t}\supind{i}\left(z_{t}\supind{i}-z_{t + 1}\supind{i}\right)\right)=\mathrm{sign}\left(z_{t}\supind{i}-z_{t + 1}\supind{i}\right),
  \end{equation*}
  and there must thus exist some $i^{*}\in[d]$ such that
  $x_{t}\supind{i}(x_{t}\supind{i}-x_{t + 1}\supind{i})\leq0$ for every
  $i\leq i^{*}$ and
  $x_{t}\supind{i}(x_{t}\supind{i}-x_{t + 1}\supind{i})\geq0$ for every
  $i>i^{*}$. We thus have (by expanding
  in the eigenbasis of $A$) that
  \begin{alignat*}{1}
    & x_{t}^{\T}A\grad f\left(x_{t}\right)  
    =\frac{1}{\eta}\sum_{i=1}^{i^{*}}\lambda\supind{i}\left(A\right)x_{t}\supind{i}\left(x_{t}\supind{i}-x_{t
     + 
    1}\supind{i}\right)+\frac{1}{\eta}\sum_{i=i^{*}+1}^{d}\lambda\supind{i}\left(A\right)x_{t}\supind{i}\left(x_{t}\supind{i}-x_{t
     + 1}\supind{i}\right)\\
    & \quad 
    \geq\lambda^{\left(i^{*}\right)}\left(A\right)\frac{1}{\eta}\sum_{i=1}^{i^{*}}x_{t}\supind{i}\left(x_{t}\supind{i}-x_{t
     + 
    1}\supind{i}\right)+\lambda^{\left(i^{*}+1\right)}\left(A\right)\frac{1}{\eta}\sum_{i=i^{*}+1}^{d}x_{t}\supind{i}\left(x_{t}\supind{i}-x_{t
     + 1}\supind{i}\right)\\
    & \quad 
    \geq\lambda^{\left(i^{*}\right)}\left(A\right)\frac{1}{\eta}\sum_{i=1}^{d}x_{t}\supind{i}\left(x_{t}\supind{i}-x_{t
     + 1}\supind{i}\right)=\lambda^{\left(i^{*}\right)}\left(A\right)x_{t}^{\T}\grad 
    f\left(x_{t}\right)\geq \beta x_{t}^{\T}\grad f\left(x_{t}\right)
  \end{alignat*}
  where the first two inequalities use the fact the $\lambda\supind{i}$ is
  non-decreasing with $i$, and the last inequality uses our assumption that
  $x_{t}^{\T}\nabla f\left(x_{t}\right)\leq0$ along with
  $\lambda^{\left(d\right)}\left(A\right)\leq \beta$.  
\end{proof}

\subsection{Proof of Lemma \ref{lem:monotone-weak}}
\label{sec:finally-proof-monotone-weak}

\restate{\lemMonoWeak*}

  \newcommand{\Rlow}{R_{\rm low}}
  \newcommand{\hessremain}{\Delta}
  By definition of the gradient descent iteration, we have
  \begin{equation}
    \label{eqn:norm-one-step}
    \norm{ x_{t+1}}^{2}=\norm{ x_{t}}^{2}-2\eta x_{t}^{\T}\grad f\left(x_{t}\right)+\eta^{2}\norm{ \grad f\left(x_{t}\right)}^{2},
  \end{equation}
  and therefore if we can show that $x_{t}^{\T}\grad f\left(x_{t}\right)\leq0$
  for all $t$, the lemma holds.  We give a proof by induction.  The basis of
  the induction $x_{0}^{\T}\grad f\left(x_{0}\right)\leq0$ is immediate as
  $r \mapsto f(-r b / \norm{b})$ is decreasing until
  $r = R_c$
  (recall the definition~\eqref{eq:Rc-def}), and $x_0^T \nabla f(x_0) = 0$ for
  $r \in \{0, R_c\}$.  Our induction assumption is that
  $x_{t'-1}^{\T}\grad f\left(x_{t'-1}\right)\leq0$ (and hence also
  $\norm{ x_{t'}} \geq\norm{ x_{t'-1}} $) for  $t'\leq t$ and we wish to show
  that $x_{t}^{\T}\grad f\left(x_{t}\right)\leq0$.
  Note that
  \begin{equation*}
  x^{\T}\grad f\left(x\right)=x^{\T}Ax+\rho\norm{ x}^{3}+b^{\T}x\geq\rho\norm{ x}^{3}-\gamma\norm{ x}^{2}-\norm{ b} \norm{ x} 
  \end{equation*}
  and therefore $x^{\T}\grad f\left(x\right)>0$ for every
  $\norm{ x} > \Rlow\defeq
  \frac{\gamma}{2\rho}+\sqrt{\left(\frac{\gamma}{2\rho}\right)^{2} +
    \frac{\norm{b} }{\rho}}$.
  Therefore, our induction assumption also implies
  $\norm{ x_{t'-1}} \leq \Rlow\leq R$ for every $t'\leq t$.
  
  Using that $\nabla^2 f$ is $2\rho$-Lipschitz,
  a Taylor
  expansion immediately
  implies~\cite[Lemma~1]{NesterovPo06} that for all vectors $\Delta$, we have
  \begin{equation}
    \norm{\nabla f(x + \Delta)
      - (\nabla f(x) + \nabla^2 f(x) \Delta)} \le
    \rho \norm{\Delta}^2.
    \label{eqn:lip-hess-bound}
  \end{equation}
  Thus, if we define 
  $\hessremain_t \defeq \frac{1}{\eta^2} [\nabla f(x_t) - (\nabla f(x_{t-1}) -
  \eta \nabla^2 f(x_{t-1}) \nabla f(x_{t-1}))]$, we have
  $\norm{\hessremain_t} \le \rho \norm{\nabla f(x_{t-1})}^2$, and using
  the iteration $x_t = x_{t-  1} - \eta \nabla f(x_{t-1})$ yields
  \begin{align}
    x_{t}^{\T}\grad f\left(x_{t}\right)
    & =x_{t-1}^{\T}\grad f\left(x_{t-1}\right)
    -\eta\norm{ \grad f\left(x_{t-1}\right)}^2
    - \eta \underbrace{x_{t-1}^{\T}\grad^{2}f\left(x_{t-1}\right)
      \grad f\left(x_{t-1}\right)}_{\defeq \mc{T}_1} \nonumber \\
    & \qquad+\eta^{2}
    \underbrace{\grad f\left(x_{t-1}\right)^T 
    \grad^{2}f\left(x_{t-1}\right)\grad f\left(x_{t-1}\right)}_{\defeq \mc{T}_2}
    +\eta^{2} \underbrace{x_{t}^{\T}\hessremain_t}_{\defeq \mc{T}_3}.
    \label{eqn:three-fun-terms}
  \end{align}
  We bound each of the terms $\mc{T}_i$ in turn. We have that
   \begin{alignat*}{1}
      \mc{T}_1 &= x_{t-1}^{\T}\grad^{2}f\left(x_{t-1}\right)\grad 
  f\left(x_{t-1}\right)=x_{t-1}^{\T}A\grad f\left(x_{t-1}\right)+2\rho\norm{ 
  x_{t-1}} 
  x_{t-1}^{\T}\grad f\left(x_{t-1}\right) \\
  & \qquad \ge (\beta + 2\rho \norm{x_{t-1}})x_{t-1}^{\T}\grad 
  f(x_{t-1}) \ge (\beta + 2\rho R)x_{t-1}^{\T}\grad 
  f(x_{t-1}),
      \label{eqn:x-transpose-A-expansion}
    \end{alignat*}
    where both inequalities follow from the induction assumption; the first is 
    Lemma~\ref{lemma:x-transpose-A-nice} and the second is due to 
    $\norm{x_{t-1}} \le R$ and $x_{t-1}^{\T}\grad 
    f(x_{t-1}) \le 0 $.
    
  Treating the second order term $\mc{T}_2$, we obtain that
  \begin{alignat*}{1}
    \mc{T}_2  \le \opnorm{\hess f(x_{t-1})} 
    \norm{\grad f(x_{t-1})}^2 \leq\left(\beta+2\rho R\right)
    \norm{ \grad f\left(x_{t-1}\right)}^{2},
  \end{alignat*}
  and, by the Lipschitz bound~\eqref{eqn:lip-hess-bound},
  the remainder term $\mc{T}_3$ satisfies
  \begin{align*}
    \mc{T}_3 = x_{t}^{\T} \hessremain_t &
    \leq\norm{ x_{t}} \norm{ r} \leq\rho\norm{ x_{t}}
      \norm{ \grad f\left(x_{t-1}\right)}^{2}
     \leq \rho\norm{ x_{t-1}-\eta\grad f\left(x_{t-1}\right)} \norm{ \grad 
     f\left(x_{t-1}\right)}^{2}\\
    & \leq\rho\norm{ x_{t-1}} \norm{ \grad f\left(x_{t-1}\right)}^{2}+\rho\eta\norm{ \grad f\left(x_{t-1}\right)}^{3}.
  \end{align*}
  Using that
  $\norm{\nabla f(x)} = \norm{\nabla f(x) - \nabla f(\s)} \le (\beta + 2R)
  \norm{x - \s} \le R(\beta + 2 \rho R)$
  for $\norm{x} \le R$ and that $\eta\leq1/2\left(\beta+2\rho R\right)$, our
  inductive assumption that $\norm{x_{t-1}} \le R$ thus guarantees that
  $\mc{T}_3 \le 2 \rho R \norm{\nabla f(x_{t-1})}^2$.  Combining our bounds 
  on
  the terms $\mc{T}_i$ in expression~\eqref{eqn:three-fun-terms}, we have that
  \begin{equation*}
    x_{t}^{\T}\grad f\left(x_{t}\right)
    \leq\left(1-\eta\left(\beta +2\rho R\right)\right)x_{t-1}^{\T}\grad f\left(x_{t-1}\right)
    -\left(\eta - \eta^{2}(\beta+4\rho R)\right) \norm{ \grad f(x_{t-1})}^{2}.
  \end{equation*}
  Using $\eta\leq1/\left(\beta+4\rho R\right)$ shows that
  $x_{t}^{\T}\grad f\left(x_{t}\right)\leq0$, completing our induction.
  By the expansion~\eqref{eqn:norm-one-step}, we have
  $\norm{x_t} \le \norm{x_{t+1}}$ as desired, and that
  $x_t^T \grad f(x_t) \le 0$ for all $t$ guarantees that $\norm{x_t} \le \Rlow \le R$.
\section{Proofs of technical results from Section~\ref{sec:proofs}}
\label{app:proof}
As in the statement of our major theorems and as we note in
the beginning of Section~\ref{sec:proofs}, we tacitly assume
Assumptions~\ref{assu:step-size} and~\ref{assu:init} throughout this section. 

\subsection{Proof of Lemma~\ref{lem:lin-converge}}
\label{sec:proof-lin-converge}

\restate{\lemLinConv*}

Expanding $x_t = x_{t - 1} - \eta \nabla f(x_{t-1})$, we have
\begin{equation}
  \label{eqn:lyapunov-distance}
  \norm{x_{t}-\s}^2
  =\norm{x_{t-1}-\s}^2 - 2\eta\left(x_{t-1}-\s\right)^{\T} \grad f(x_{t-1})
  + \eta^2\norm{\grad f\left(x_{t-1}\right)}^2.
\end{equation}
Using the equality
$\grad f\left(x\right)=\As\left(x-\s\right)-\rho\left(\norm{\s} -\norm{x}
\right)x$, we rewrite the cross-term  $\left(x_{t-1}-\s\right)^{\T}
\grad
f\left(x_{t-1}\right)$ as
\begin{flalign}
  \nonumber
  \left(x_{t-1}-\s\right)^{\T}&\As 
  \left(x_{t-1}-\s\right)+\rho\left(\norm{x_{t-1}}
  -\norm{\s} \right)(\norm{x_{t-1}}^2-\s^{\T}x_{t-1}) \\ \nonumber
  = & 
  \left(x_{t-1}-\s\right)^{\T}\left(\As+\frac{\rho}{2}\left(\norm{x_{t-1}}
    -\norm{\s} \right) I \right) \left(x_{t-1}-\s\right) \\
   & +\frac{\rho}{2}\left(\norm{\s} -\norm{x_{t-1}} 
   \right)^2\left(\norm{x_{t-1}} 
    +\norm{\s} \right).
    \label{eqn:cross-term-lyapunov}
\end{flalign}
Moving to the second order term $\norm{\grad f\left(x_{t-1}\right)}^2$
from the expansion~\eqref{eqn:lyapunov-distance},
we find
\begin{alignat*}{1}
  \norm{\grad f\left(x_{t-1}\right)}^2 & 
  =\norm{\As\left(x_{t-1}-\s\right)+\rho\left(\norm{x_{t-1}} -\norm{\s} 
  \right)x_{t-1}}^2\\
  & 
  \leq2\left(x_{t-1}-\s\right)^{\T}\As^2\left(x_{t-1}-\s\right)+2\rho^2\left(\norm{x_{t-1}}
   -\norm{\s} \right)^2\norm{x_{t-1}}^2.
\end{alignat*}
Combining this inequality with the cross-term
calculation~\eqref{eqn:cross-term-lyapunov}
and the squared distance~\eqref{eqn:lyapunov-distance} we obtain
\begin{alignat*}{1}
  \norm{x_{t}-\s}^2 & \le
  (x_{t-1}-\s)^{\T}(I-2\eta \As(I-\eta \As)-\eta\rho(\norm{x_{t-1}} 
  -\norm{\s} ) I )
  (x_{t-1}-\s) \\
  & \qquad-\eta\rho\left(\norm{\s} -\norm{x_{t-1}} \right)^2\left(\norm{x_{t-1}} \left(1-2\eta\rho\norm{x_{t-1}} \right)+\norm{\s} \right).
\end{alignat*}
Using $\eta\leq\frac{1}{4\left(\beta+\rho 
R\right)}\leq\frac{1}{4\opnorm{\As}}$ yields 
$2\eta \As\left(1-\eta \As\right)\succeq\frac{3}{2}\eta
\As\succeq\frac{3}{2}\eta \left(-\gamma+\rho\norm{\s} \right)I$, so
\begin{align*}
  \norm{x_{t}-\s}^2 & \leq
  \left(1-\frac{\eta}{2}\left[-3\gamma+\rho\left(\norm{\s} +2\norm{x_{t-1}} \right)\right]\right)\norm{x_{t-1}-\s}^2 \\
  & \qquad ~ -\eta\rho\left(\norm{\s} -\norm{x_{t-1}} \right)^2\norm{\s}.
\end{align*}

\subsection{Proof of Lemma~\ref{lem:growth}}
\label{sec:proof-growth}

\restate{\lemGrowth*}
 
The claim is trivial when $\gamma\le0$ as it clearly implies $\rho \norm{x_t} 
\ge \gamma$, so we assume $\gammaplus = \gamma > 0$. Using
Proposition~\ref{prop:converge} that gradient descent is
convergent, we may define $t\opt = \max\{t : \rho \norm{x_t} \le \gamma 
- 
\smallval\}$.
Then for every $t\leq t\opt$, the gradient descent 
iteration~\eqref{eq:grad-iter} 
satisfies
\begin{align*}
  \frac{x_{t}^{\left(1\right)}}{-\eta b^{\left(1\right)}}
  & =\left(1+\eta\gamma-\eta\rho\norm{x_{t-1}} 
  \right)\frac{x_{t-1}^{\left(1\right)}}{-\eta b^{\left(1\right)}}+1 \\ & 
  \geq\left(1+\eta\smallval\right)\frac{x_{t-1}^{\left(1\right)}}{-\eta 
  b^{\left(1\right)}}+1
  \geq \cdots 
  \geq\frac{1}{\eta\smallval}\left(\left(1+\eta\smallval\right)^{t}-1\right).
\end{align*}
Multiplying both sides of the equality by $\eta |b\supind{1}|$ and
using that $x_t\supind{1} b\supind{1} \le 0$, we have
\begin{equation*}
  \frac{\gamma-\smallval}{\rho}\geq\norm{x_{t\opt}} \geq 
  |x_{t\opt}^{\left(1\right)}|\geq\frac{|b^{\left(1\right)}|}{\smallval}\left(\left(1+\eta\smallval\right)^{t\opt}-1\right).
\end{equation*}
Consequently,
\begin{equation*}
  t\opt\leq\frac{\log\left(1+\frac{(\gamma-\smallval)\smallval}{\rho|b\supind{1}|}\right)}{
    \log(1+\eta\smallval)}
  \leq\frac{2}{\eta\smallval}\log\left(1+\frac{\gammaplus^2}{4\rho|b^{\left(1\right)}|}\right),
\end{equation*}
where we used
$\eta\smallval \leq \eta\gamma \leq \gamma/\beta \leq1$, whence
$\log(1 + \eta \smallval) \ge \frac{\eta \smallval}{2}$, and
$ \gamma \smallval - \smallval^2 \le \sup_{x\ge0}\{x(\gamma-x)\} \le 
\frac{ 
\gammaplus^2 }{ 4} $.

\subsection{Proof of Lemma~\ref{lem:proj-lin-converge}}
\label{sec:proof-proj-lin-converge}

\restate{\lemProjLinConverge*}

For typographical convenience, we prove the result with $t + 1$ replacing $t$.
Using the commutativity of $\Pi$ and $A$, we have $\Pi \As=\As\Pi$, and therefore also  $\Pi \As^{1/2}=\As^{1/2}\Pi$, implying
\begin{flalign}
  \nonumber \norm{\Pi \As^{1/2}\left(x_{t + 1}-\s\right)}^2
  = & \norm{\Pi
    \As^{1/2}\left(x_t-\s\right)}^2\\ &-2\eta\left(x_t-\s\right)^T \!\!
  \As\Pi
  \grad f\left(x_t\right)+\eta^2\norm{\Pi \As^{1/2}\grad
    f\left(x_t\right)}^2.
  \label{eqn:proj-lyapunov}
\end{flalign}
We substitute
$\grad f\left(x\right)=\As\left(x-\s\right)-\rho\left(\norm{\s} -\norm{x}
\right)x$ in the cross term to obtain
\begin{align*}
  \lefteqn{\left(x_t-\s\right)^{\T} \Pi \As\grad f\left(x_t\right)} \\
  & \qquad ~ = \left(x_t-\s\right)^{\T}\Pi \As^2\Pi\left(x_t-\s\right)
  - \rho\left(\norm{\s} -\norm{x_t} \right)x_t^{\T}\Pi \As\left(x_t-\s\right).
\end{align*}
Substituting
$\As\left(x-\s\right)=\grad f\left(x\right)+\rho\left(\norm{\s} -\norm{x}
\right)x$ in the last term yields
\begin{equation}
  x_t^{\T}\Pi \As\left(x_t-\s\right)=x_t^{\T}\Pi\grad
  f\left(x_t\right)+\rho\left(\norm{\s} -\norm{x_t} \right)\norm{\Pi
    x_t}^2.
  \label{eqn:proj-cross-term}
\end{equation}
Invoking Lemma~\ref{lemma:monotone} and the fact that
$x_t^{\T}\grad f\left(x_t\right)\leq0$, we get
\begin{alignat*}{1}
  x_t^{\T}\Pi\grad f\left(x_t\right) & =x_t^{\T}\grad f\left(x_t\right)-x_t^{\T}\left(I-\Pi\right)\grad f\left(x_t\right)\\
  & \leq-x_t^{\T}\left(I-\Pi\right)\grad f\left(x_t\right)\\
  & =-x_t^{\T}\left(I-\Pi\right)\As\left(x_t-\s\right)+\rho\left(\norm{\s} 
  -\norm{x_t} \right)\norm{\left(I-\Pi\right)x_t}^2\\
  & \leq\opnorm{\left(I-\Pi\right)\As} \norm{x_t} \norm{x_t-\s} 
  +\rho\left(\norm{\s} -\norm{x_t} \right)\norm{\left(I-\Pi\right)x_t}^2\\
  & \leq\sqrt{2}\opnorm{\left(I-\Pi\right)\As} 
  \norm{\s}^2+\rho\left(\norm{\s} -\norm{x_t} 
  \right)\norm{\left(I-\Pi\right)x_t}^2,
\end{alignat*}
where in the last line we used $x_t^{\T}\s \ge 0$ (by
Lemma~\ref{lemma:signs}). Combining this with
the cross terms~\eqref{eqn:proj-cross-term}, we find that
\begin{subequations}
\begin{alignat}{1}\label{eq:subspace-bound-1}
  x_t^{\T}\Pi \As\left(x_t-\s\right) & \leq\sqrt{2}
  \opnorm{\left(I-\Pi\right)\As} \norm{\s}^2
  +\rho\left(\norm{\s} -\norm{x_t} \right)\norm{x_t}^2.
\end{alignat}
Moving on to the second order term in the expansion~\eqref{eqn:proj-lyapunov},
we have
\begin{alignat}{1}
  \norm{\Pi \As^{1/2}\grad f\left(x_t\right)}^2 & =\norm{\Pi 
  \As^{3/2}\left(x_t-\s\right)+\rho\left(\norm{x_t} -\norm{\s} 
  \right)\As^{1/2}\Pi x_t}^2 \nonumber\\
  & \leq2\norm{\Pi \As^{3/2}\left(x_t-\s\right)}^2+2\rho^2\opnorm{\Pi 
  \As} 
  \left(\norm{x_t} -\norm{\s} \right)^2\norm{x_t}^2. 
  \label{eq:subspace-bound-2}
\end{alignat}
\end{subequations}
Substituting the bounds~\eqref{eq:subspace-bound-1} 
and~\eqref{eq:subspace-bound-2} into the
expansion~\eqref{eqn:proj-lyapunov},
we have
\begin{align*}
 \norm{\Pi \As^{1/2}\left(x_{t + 1}-\s\right)}^2 
\leq & \left(x_t-\s\right)^{\T}\left(I-2\eta\Pi \As\left(I-\eta\Pi 
\As\right)\right)\Pi 
\As\left(x_t-\s\right)\\
&  +2\eta\rho\left(\norm{\s} -\norm{x_t} 
  \right)\left[\sqrt{2}\opnorm{\left(I-\Pi\right)\As}\norm{\s} 
  ^2\right. \\ & \qquad\qquad
  + \left. \left(1+\eta\opnorm{\Pi \As} \right)\rho\left(\norm{x_t} 
  -\norm{\s} \right)\norm{x_t}^2\right].
\end{align*}
Using $\eta\leq1/(4\left(\beta+\rho R\right))$, which guarantees
$0 \preceq \eta\Pi \As\preceq I/4 \prec I/2$, together with the 
assumption that $\Pi \As\succeq\smallval\Pi$ gives
\begin{equation*}
0 \preceq I-2\eta\Pi \As\left(I-\eta\Pi 
\As\right) \preceq (1-\eta\smallval)I
\end{equation*} 
and therefore
\begin{alignat*}{1}
&\norm{\Pi \As^{1/2}\left(x_{t + 1}-\s\right)}^2  
\leq\left(1-\eta\smallval\right)\norm{\Pi 
	\As^{1/2}\left(x_t-\s\right)}^2\\
& \quad\quad\quad\quad+\sqrt{8}\eta\rho\left(\norm{\s} -\norm{x_t} 
\right)\left[\rho\left(\norm{\s} -\norm{x_t} \right)\norm{x_t}^2
+\opnorm{\left(I-\Pi\right)\As}\norm{\s}^2\right].
\end{alignat*}

\subsection{Proof of Lemma~\ref{lemma:eventually-proj-lin-converge}}
\label{sec:proof-eventually-proj-lin-converge}

\restate{\lemEventuallyProjLinConverge*}

The conditions of the lemma imply that for $\tau\geq0$,
\begin{equation*}
  \rho(\norm{\s} -\norm{x_{t+\tau}}) \leq 4\sqrt{\smallval\bar{\smallval}}/3
\end{equation*}
and also that
$\opnorm{\left(I-\Pi_{\smallval}\right)\As}\leq2\smallval \le 
2\sqrt{\smallval\bar{\smallval}}$ 
(Eq.~\eqref{eq:subpsace-opnorm-bound}), 
and
$\Pi_{\smallval}\As\succeq\bar{\smallval}I$. Substituting these bounds 
into
Lemma~\ref{lem:proj-lin-converge} along with $\norm{x_{t-1}} \leq\norm{\s} $
(Lemma~\ref{lemma:monotone}), we get
\begin{alignat*}{1}
  \norm{\Pi_{\smallval}\As^{1/2}\left(x_{t+\tau}-\s\right)}^2 & \leq 
  \left(1-\eta\bar{\smallval}\right) 
  \norm{\Pi_{\smallval}\As^{1/2}\left(x_{t+\tau-1}-\s\right)}^2+ 
  13\eta\smallval\bar{\smallval}\norm{\s}^2.
\end{alignat*}
Iterating this $\tau$ times gives
\begin{alignat*}{1}
  \norm{\Pi_{\smallval}\As^{1/2}\left(x_{t+\tau}-\s\right)}^2 & 
  \leq\left(1-\eta\bar{\smallval}\right)^{\tau}\norm{\Pi_{\smallval}\As^{1/2}\left(x_{t}-\s\right)}^2+13\smallval\norm{\s}^2\left(1-\left(1-\eta\bar{\smallval}\right)^{\tau}\right)\\
 & \leq2\left(\beta+\rho\norm{\s} 
 \right)\norm{\s}^2e^{-\eta\bar{\smallval}\tau}+13\norm{\s}^2\smallval
\end{alignat*}
where the last transition uses that
\begin{equation*}
  \norm{\Pi_{\smallval}\As^{1/2}\left(x_{t}-\s\right)}^2\leq\opnorm{\As} 
  \norm{x_{t}-\s}^2\leq\left(\beta+\rho\norm{\s} \right)2\norm{\s}^2.
\end{equation*}

\subsection{Proof of Lemma~\ref{lem:rand-prop}}
\label{sec:proof-rand-prop}
\restate{\lemPertProp*}

To establish part (i) of the lemma, note that marginally
$[q^{\left(1\right)}]^2\sim\mathrm{Beta}(\frac{1}{2},\frac{d-1}{2})$ and
that $q^{\left(1\right)}$ is symmetrically distributed. Therefore, for $d>2$
the density of
$\tilde{b}^{\left(1\right)}=b^{\left(1\right)}+\sigma q^{\left(1\right)}$ is
maximal at $b^{\left(1\right)}$ and is monotonically decreasing in the
distance from $b^{\left(1\right)}$. Therefore we have
\begin{align*}
  \P\left(|\tilde{b}\supind{1}| \le \sigma\sqrt{\pi}\delta/\sqrt{2 d}\right) \le  
  \P\left(|q\supind{1}| \le \sqrt{\pi}\delta / \sqrt{2 d} \right) \le \delta,
\end{align*}
where the bound $p_1(u) \le \sqrt{d / (2\pi u)}$
on the density $p_1$ of $q\supind{1}$ yields the last inequality.

Part (ii) of the lemma is immediate, as
\begin{equation*}
  |f\left(x\right)-\tilde{f}\left(x\right) | =
  |(b-\tilde{b})^{\T}x|\leq\sigma\norm{q} \norm{
    x} =\sigma \norm{x}.
\end{equation*}

To show part (iii) of the lemma, we first note that $\norm{\s}$ is
a well-defined function of $b$, because $\s$ is not unique only when
$\norm{\s} =\gamma/\rho$ (see Proposition~\ref{prop:classic-char}).  
Next, from the relation $b=-(A+\rs I)\s$
we see that the inverse mapping $\s \mapsto b$ is a smooth function, with Jacobian
\begin{equation*}
  \frac{\del b}{\del \s}=-\left(A+\rs I +\rho\frac{\s\s^{\T}}{\norm{\s}}\right).
\end{equation*}
Let us now evaluate $\del\norm{\s}/\del b$ when
the mapping $\s \mapsto b(\s) = -(A + \rho \norm{\s} I) \s$
is invertible (\ie in the case that $\norm{\s} >\gamma/\rho$); 
the inverse function theorem yields
\begin{equation*}
  \frac{\del\norm{\s}}{\del b}=\frac{1}{2\norm{\s}}\frac{\del\left(\s^{\T}\s\right)}{\del b}=\frac{\del \s}{\del b}\cdot \frac{\s}{\norm{\s}}=-\left(A+\rs I +\rho\frac{\s\s^{\T}}{\norm{\s}}\right)^{-1}\frac{\s}{\norm{\s}}.
\end{equation*}

\newcommand{\bfunc}[1]{b^{#1}}
\newcommand{\sfunc}[1]{\s^{#1}}

For $\theta\in[0,1]$, let $\bfunc{\theta} \defeq b+\theta(\tilde{b}-b)$, let $\sfunc{\theta}$ denote a global minimizer of $f$ with $b$ replaced with $\bfunc{\theta}$. Let $\tau = \argmin_{\theta\in[0,1]}{\norm{\sfunc{\theta}}}$ and let $H(\theta)=\norm{\sfunc{\theta}}-\norm{\sfunc{\tau}}$. Using the chain rule, the calculations above, and $\norms{\tilde{b}-b}=\sigma$, we have
\begin{equation*}
  H'(\theta) = {(\tilde{b}-b)^T \frac{\del\norm{\sfunc{\theta}}}{\del b}} \le \sigma \norm{\left(A+\rho \norm{\sfunc{\theta}} I +\rho\frac{\sfunc{\theta}(\sfunc{\theta})^{\T}}{\norm{\sfunc{\theta}}}\right)^{-1} \frac{\sfunc{\theta}}{\norm{\sfunc{\theta}}} } \le \frac{\sigma}{\rho H(\theta)},
\end{equation*}
where the final inequality follows from the fact that $A+\rho \norm{\sfunc{\tau}}I\succeq 0$ (since $\sfunc{\tau}$ is a global minimizer of a cubic-regularized objective) and therefore 
\[
  M=A+\rho \norm{\sfunc{\theta}} I+\rho\frac{\sfunc{\theta}(\sfunc{\theta})^{\T}}{\norm{\sfunc{\theta}}} =
  A+\rho \norm{\sfunc{\tau}} I+\rho\frac{\sfunc{\theta}(\sfunc{\theta})^{\T}}{\norm{\sfunc{\theta}}} + \rho\left(\norms{\sfunc{\theta}} - \norm{\sfunc{\tau}}\right)I \succeq \rho H(\theta)I \succeq 0
\]
(by definition of $\tau$), meaning that all eigenvalues of $M^{-1}$ are at most $1/(\rho H(\theta))$. 
We conclude that
\begin{equation*}
 (H^2(\theta))' = 2H(\theta)H'(\theta) \le \frac{2\sigma}{\rho}.
\end{equation*}
Integrating from $\tau$ to $1$ (and recalling that $H(\tau)=0$), we obtain
\begin{equation*}
  (\norm{\stilde}-\norm{\sfunc{\tau}})^2 = H^2(1) \le \frac{2\sigma}{\rho}(1-\tau)\le \frac{2\sigma}{\rho}
\end{equation*}
and consequently (by definition of $\tau$)
\begin{equation*}
  \norm{\stilde}-\norm{\s}\le \norm{\stilde}-\norm{\sfunc{\tau}} \le \sqrt{\frac{2\sigma}{\rho}}.
\end{equation*}
The same upper bound on $\norm{\s}-\norm{\stilde}$ follows analogously by integrating $(H^2(\theta))'$ from $\tau$ to $0$.

\section{Proof of Proposition~\ref{prop:linesearch}}
\label{app:linesearch}
We begin with a lemma
 implicitly assuming the conditions of
 Proposition~\ref{prop:linesearch}.
\begin{lemma}
  \label{lemma:small-x-line-search}
  For all $t$ we have $\norm{x_t} \le 2R$, with $R$ given by~\eqref{eq:R-def}.
\end{lemma}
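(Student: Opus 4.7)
The plan is to bound $\norm{x_t}$ using only the descent property of the line search together with a crude lower bound on $f$, without invoking the specific upper endpoint of $C_t$ at all. Since $0\in C_t$, exact minimization over $C_t$ immediately gives $f(x_{t+1})\le f(x_t-0\cdot\nabla f(x_t))=f(x_t)$, so by induction $f(x_t)\le f(x_0)$ for every $t$.

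Next I would show $f(x_0)\le 0$. By Assumption~\ref{assu:init}, $x_0=-rb/\norm{b}$ for some $r\in[0,R_c]$. The univariate cubic $\phi(r)\defeq f(-rb/\norm{b})$ satisfies $\phi(0)=0$ and
\begin{equation*}
\phi'(r)=\rho r^2+\frac{b^{\T}Ab}{\norm{b}^2}r-\norm{b},
\end{equation*}
whose unique positive root is exactly $R_c$ by the characterization~\eqref{eq:Rc-def}. Hence $\phi$ is nonincreasing on $[0,R_c]$, giving $f(x_0)=\phi(r)\le\phi(0)=0$.

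With $f(x_t)\le 0$ in hand, I combine the crude bounds $-\half x_t^{\T}Ax_t\le\half\beta\norm{x_t}^2$ and $-b^{\T}x_t\le\norm{b}\norm{x_t}$ to get
\begin{equation*}
\frac{\rho}{3}\norm{x_t}^3\le\frac{\beta}{2}\norm{x_t}^2+\norm{b}\norm{x_t},
\end{equation*}
and dividing by $\norm{x_t}$ (the case $x_t=0$ being trivial) produces a quadratic inequality in $\norm{x_t}$ with solution
\begin{equation*}
\norm{x_t}\le\frac{3\beta}{4\rho}+\sqrt{\frac{9\beta^2}{16\rho^2}+\frac{3\norm{b}}{\rho}}.
\end{equation*}
Finally I would compare this termwise with $2R=\beta/\rho+2\sqrt{\beta^2/(4\rho^2)+\norm{b}/\rho}$: clearly $3\beta/(4\rho)\le\beta/\rho$, and since $9/16\le 1$ and $3\le 4$, the square root above is at most $2\sqrt{\beta^2/(4\rho^2)+\norm{b}/\rho}$. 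Summing the two inequalities yields $\norm{x_t}\le 2R$.

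I do not expect a serious obstacle: the argument is purely an energy estimate and uses neither the monotonicity of $\norm{x_t}$ (which can fail under line search, per Fig.~\ref{fig:steepest-fail}) nor the particular upper endpoint of $C_t$ from~\eqref{eq:constrained-steepest}; it uses only that $0\in C_t$ and that the line search is exact. The one numerical care point is confirming the constant $2$ in the bound $\norm{x_t}\le 2R$, which falls out of the elementary comparison above; that $C_t$ has a specific upper bound presumably matters for the rest of Proposition~\ref{prop:linesearch} but not for this lemma.
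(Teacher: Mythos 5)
Your proof is correct and follows essentially the same route as the paper's: both use that exact line search over $C_t \ni 0$ makes $f(x_t)$ non-increasing, that $f(x_0)\le 0$ under Assumption~\ref{assu:init}, and then extract a bound on $\norm{x_t}$ from the cubic lower bound $f(x)\ge -\norm{b}\norm{x}-\tfrac{\beta}{2}\norm{x}^2+\tfrac{\rho}{3}\norm{x}^3$. The only cosmetic difference is the last step: the paper argues by contrapositive, showing $\norm{x}>2R$ forces $f(x)>0$ by noting the lower-bounding cubic is increasing past $R$ (its critical point), while you divide by $\norm{x_t}$ and solve the resulting quadratic inequality in $\norm{x_t}$, then compare termwise with $2R$; these are equivalent and both numerically check out.
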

\begin{proof}
  Note that $R$ minimizes the
  polynomial $-\|b\|r-\beta r^2/2 + \rho r^3/3$ as it solves
  $-\norm{b} - \beta R + \rho R^2 = 0$. This implies that for every
  $\|x\| > 2R$ we have
  \begin{align*}
    f(x)  \ge -\|b\|\|x\|-\frac{\beta}{2} \|x\|^2 + \frac{\rho}{3} \|x\|^3
    > 2R \left(  -\|b\|  - \beta R + \frac{4\rho}{3} R^2 \right) 
    = \frac{2\rho}{3} R^3 \ge 0,
  \end{align*}
  where the first inequality follows because $b^{\T}x \ge -\|b\|\|x\|$ and
  $\beta \ge \opnorm{A}$, the second because
  $-\|b\|\|x\|-\beta \|x\|^2/2 + \rho \|x\|^3/3$ is increasing in $\|x\|$ for
  $\|x\| \ge R$, and in the last inequality we substituted
  $\|b\| = \rho R^2 - \beta R$. By Assumption~\ref{assu:init}, $f(x_0)\le 0$,
  and the definition~\eqref{eq:constrained-steepest} of the step size $\eta_t$
  guarantees that $f(x_t)$ is non-increasing. Thus $f(x_t) \le 0$ for all
  $t$, so $\|x_t\|\le2R$.
\end{proof}

\newcommand{\etasimple}{\eta_{\rm feas}}
As in our proof of Lemma~\ref{lemma:signs}, we focus on the on the first
coordinate of the iteration~\eqref{eq:grad-variablestep} (\ie 
$x_{t + 1} = x_t - \eta_t \nabla f(x_t)$) in the eigenbasis of $A$, writing
\begin{equation*}
  x_{t+1}^{\left(1\right)}=\left(1-\eta_t \left[-\gamma+\rho\| x_{t}\|
    \right]\right)
  x_{t}^{\supind{1}} - \eta_t b\supind{1}.
  \label{eq:coord-rec-linesearch}
\end{equation*}
By the constrains in the definition~\eqref{eq:constrained-steepest} of the 
step size $\eta_t$, we have
\begin{equation*}
  1-\eta_t (-\gamma+\rho\| x_{t}\| ) \ge 1-\eta_t
  \hinge{\frac{\grad f(x_t)^{\T} A \grad f(x_t)}{\|\grad f(x_t)\|^2}+\rho \norm{x_{t}} }
  \ge 0.
\end{equation*}
By Assumption~\ref{assu:init}, $b^{(1)}x_0^{(1)} \le 0$, so
$b^{(1)}x_t^{(1)} \le 0$ for every $t$.
Since $u^\T A u / \|u\|^2 \le \opnorm{A} \le \beta$ for all $u$ and
$\|x_t\| \le 2R$ for every $t$, the step size $\etasimple \defeq 1/(\beta+4\rho 
R)$
is always feasible, and we have
$f(x_{t+1}) \le f(x_t - \etasimple \grad f(x_t))$. Moreover, since $f$ is
$\beta+4\rho R$-smooth on the set
$\mathbb{B}_{2R} = \{x \in \R^d : \norm{x} \le 2R\}$, and as
$x_t \in \mathbb{B}_{2R}$ for all $t$ by
Lemma~\ref{lemma:small-x-line-search}, we have $f(x_{t+1}) \le f(x_t) -
\frac{\etasimple}{2} \|\grad f(x_t)\|^2$, which implies $\grad f(x_t) \to 0$. 
Having established $b^{(1)}x_t^{(1)} \le 0$ for every $t$ and $\grad f(x_t) \to 
0$ as $t\to\infty$, the remainder of the proof is identical to that of
Proposition~\ref{prop:converge}. 

\section{Proofs from Section \ref{sec:trustregion}}\label{app:trustregion}

\subsection{Proof of Lemma~\ref{lem:SSP}}
\label{sec:proof-SSP}
\restate{\lemSSP*}

For $x_0$ defined in line \ref{line:x0} of Alg.~\ref{alg:SSP}, we have
$f(x_0) = - (1/2)R_c \|b\| - (\rho/6)R_c^3$, where
$R_c$ is the Cauchy radius~\eqref{eq:Rc-def}.
Therefore a sufficient condition for
$f(x_0) \le -(1-\eps')\rho\slb^3/6$ is $R_c\|b\| \ge \rho\slb^3/3$. We have
\begin{equation*}
  R_c \ge 
  \frac{-\beta}{2\rho}+\sqrt{\left(\frac{\beta}{2\rho}\right)^{2}+\frac{\| 
  b\|}{\rho}} \ge \min\left\{\frac{2\|b\|}{3\beta} , 
  \sqrt{\frac{\|b\|}{3\rho}}\right\} \ge 
  \frac{1}{3}\min\left\{\frac{\|b\|}{\beta} , \sqrt{\frac{\|b\|}{\rho}}\right\},
\end{equation*}
where the second inequality follows from
$\sqrt{1+\alpha} \ge 1 + (\min\{\alpha/3 , \sqrt{\alpha/3}\})$ for every
$\alpha\ge0$.  Thus, Alg.~\ref{alg:SSP} returns $x_0$ whenever 
$\|b\|\min\big\{\frac{\|b\|}{\beta} , \sqrt{\frac{\|b\|}{\rho}}\big\} \ge 
\rho\slb^3$,
which is equivalent to the second part of the ``or'' condition in the lemma.

Now, suppose that the algorithm does not return $x_0$, \ie$f(x_0) > 
-(1-\eps')\rho\slb^3/6$. Since $f(x_0) < - (\rho/6)R_c^3$ , this implies 
that $R_c < \slb$.  Since $\rho R_c \ge \rho R - \beta$, with $R$ defined in 
\eqref{eq:R-def}, we have $\rho \slb > \rho R - \beta$. Therefore, $\eta \le 
1/(8\beta + 4\rho\slb) \le 1/(4\beta + 4\rho R)$, and so the
stepsize $\eta$ required in the lemma statement satisfies 
Assumption~\ref{assu:step-size}.
Since we choose $\tilde{x}_0$ in accordance with Assumption 
\ref{assu:init}, we may invoke Theorem~\ref{thm:pert-main-result} with 
$\eps = \rho \norm{\s}^3 \eps'/12$.

Our setting $\sigma = \frac{\rho^3 \slb^4 (\eps')^2}{2(120\beta+240\rho\slb)^2} = 
\frac{\rho \slb^4 \eps}{200\norm{\s}^4(\beta+2\rho\slb)^2}$ implies 
\begin{equation*}
  \sigbar = \frac{200(\beta+2\rs)^2 \nss}{\rho\eps^2} \sigma = 
  \left(\frac{\beta+2\rs}{\beta+2\rho\slb}\right)^2 \cdot \frac{\slb^4}{\norm{\s}^4}~.%
\end{equation*}
Therefore, assuming $\slb \le \norm{\s}$, we have that $(\slb/\norm{\s})^4 \le 
\sigbar \le (r/\ns)^2 
\le 1$.
Substituting these upper and lower bounds,
Theorem~\ref{thm:pert-main-result} 
shows that, with probability at least $1-\delta$, $f(\tilde{x}_t) \le f(\s) + 
(1+\sigbar)\eps \le f(\s) + 2\eps$ for all
\begin{equation*}
  t \ge \frac{40 \nss}{\eta\eps}\left( 6\log\left(1+\frac{50\sqrt{d}}{\delta}\cdot 
      \frac{\norm{\s}^4}{\slb^4}\right)
    +20\log\left(\frac{(\beta+2\rs)\nss}{\eps}\right)\right) \defeq 
  \tilde{T}_\eps^{\mathrm{sub}}.
\end{equation*}
Using  $\rho \ns \le \beta + \rho R \le \frac{1}{4\eta}$ and plugging in $\eps = 
\rho\norm{\s}^3\eps'/12$, we see that
\begin{flalign*}
  \tilde{T}_\eps^{\mathrm{sub}}& \le \frac{480}{\eta\norm{\s}\eps'}\left( 
    6\log\left(1+\frac{\sqrt{d}}{\delta}\right)
    +6\log\left(\left[\frac{1}{\eta\rho\slb}\right]^4\right)
    +20\log\left(\frac{6}{\eta\rho\norm{\s}\eps'}\right)\right)\\
  &\le \frac{480}{\eta\slb\eps'}\left( 
    6\log\left(1+\frac{\sqrt{d}}{\delta}\right)
    +44\log\left(\frac{6}{\eta\rho\slb\eps'}\right)\right),
\end{flalign*}
where we used $\slb\le\norm{\s}$ and $\eps' < 1$. 
Therefore $T$ defined in line \ref{line:T} is larger 
than $\tilde{T}_\eps^{\mathrm{sub}}$, so with probability at least 
$1-\delta$, there exists $t\le T$ for which 
$f(\tilde{x}_t) \le f(\s) + \rho\norm{\s}^3\eps'/6$. Recalling that $f(\s) \le 
-\rho\|\s\|^3/6 \le 
-\rho\slb^3/6$ by the bound~\eqref{eq:fs-lower-bound} completes the proof.

\subsection{Proof of Proposition~\ref{prop:SP}}
\label{sec:proof-SP}

\restate{\propSP*}

We always call \callSSP{} with $\eps' = 1/2$ and
$\slb = \sqrt{\epsilon/(9\rho)}$. As $\epsilon \le \beta^2/\rho$ we have that
$\eta = 1/(10\beta) \le 1/(8\beta + 4\rho\slb)$. Since
$\opnorm{\hess g(x)} \le\beta$ by Assumption~\ref{assu:g}, we conclude that
Lemma \ref{lem:SSP} applies to each call of \callSSP{}. Note that by 
construction of Alg.~\ref{alg:SP}, every call to~\callSSP{}---except the last 
one---reduces the value of $g$ by at least $\Kprog \epsilon^{3/2} 
\rho^{-1/2}$ (Line~\ref{line:progress}). Therefore, by a standard progress 
argument, the algorithm calls \callSSP{}
at most 
\begin{equation}\label{eq:Kmax-def}
\Kmax = 1 + \ceil{\frac{\sqrt{\rho}(g(y_0)-\glb)}{\Kprog \epsilon^{3/2}}} 
\le O(1)\cdot \frac{\sqrt{\rho}(g(y_0)-\glb)}{\epsilon^{3/2}}
\end{equation}
times, where we used $\epsilon \le 
\rho^{1/3}(g(y_0)-\glb)^{2/3}$. 
Letting $\mc{E}$ be the event that at each call to
\callSSP{}, the conclusions of Lemma~\ref{lem:SSP} hold, a union bound and 
our choice  
$\delta'=\delta/(2k^2)$ at outer iteration $k$  guarantee that 
\begin{equation*}
\P(\mc{E}) \ge 1 - \sum_{k=1}^\infty\frac{\delta}{2k^2}\ge1-\delta.
\end{equation*}
 We perform our subsequent 
analysis deterministically conditional on the event $\mc{E}$.

Let $f_k$ be the cubic-regularized quadratic model at iteration $k$. We call
the iteration \emph{successful} (Line~\ref{line:progress}) whenever 
\callSSP{} finds
a point $\Delta_k$ such that
\begin{equation*}
  f(\Delta_k)\le -(1-\eps')\rho\slb^3/6 = 
  -\half \left(\frac{\epsilon}{9\rho}\right)^{3/2}\frac{\rho}{6}
  = -\Kprog \epsilon^{3/2}\rho^{-1/2}.
\end{equation*}
The bound~\eqref{eq:majorization} shows that
$g(y_{k-1}+\Delta_k) \le g(y_{k-1}) -\Kprog \epsilon^{3/2}\rho^{-1/2}$ at each
successful iteration, so the last iteration of Algorithm \ref{alg:SP} is
the only unsuccessful one.

Let $\kf$ be the index of the final iteration with model $f_\kf$,
$\siter_\kf = \argmin f_\kf$, $A_\kf = 
\hess g(y_{\kf-1})$, and let $b_\kf = \grad g(y_{\kf-1})$. Since the final 
iteration is unsuccessful, Lemma~\ref{lem:SSP} implies $\|\siter_\kf\| 
\le \sqrt{\epsilon/(9\rho)}$.
Let $\Delta_\kf$ be the point produced by the call to \callSFSP{}, and let $\yout 
= y_\kf + \Delta_\kf$  denote the output of Algorithm~\ref{alg:SP}. Note that 
\callSFSP{} guarantees that $\|\grad f_\kf(\Delta_\kf)\| \le
\epsilon/2$ (we show in the end of this proof that the while loop in 
line~\ref{line:sfsp-while} terminates after a finite number of iterations). 
Moreover, by the same 
argument we use in the proof of Lemma~\ref{lem:SSP}, 
$\eta$ satisfies Assumption 
\ref{assu:step-size}. Since Assumption~\ref{assu:init} 
is also satisfied, we have by Lemma~\ref{lemma:monotone} 
that 
$\|\Delta_\kf\| \le \|\siter_\kf\|$. Therefore, by Assumption~\ref{assu:g} 
we have 
that
\begin{equation*}
  \hess g(\yout) \succeq A_\kf - 2\rho\|\Delta_\kf\|I \succeq -\sqrt{\rho\epsilon}I,
\end{equation*}
where we used $A_\kf \succeq -\rho \|\siter_\kf\| I$ and
$\rho\|\Delta_\kf\|\le\rho\|\siter_\kf\|\le\sqrt{\rho\epsilon}/3$.  That 
is, the
output $\yout$ satisfies the second condition~\eqref{eq:second-order-crit}.

It remains to show that $\nabla g(\yout)$ is small.  Using
$\grad f_\kf(\Delta_\kf) = b_\kf + A \Delta_\kf + \rho
\|\Delta_\kf\|\Delta_\kf$ we
have
\begin{subequations}
\begin{equation}\label{eq:tr-norm-bound-1}
  \|b_\kf + A \Delta_\kf\| \le \|\grad f_\kf(\Delta_\kf)\| + \rho \|\Delta_\kf\|^2 .
\end{equation}
Recalling that $\hess g$ is $2\rho$-Lipschitz 
continuous (Assumption \ref{assu:g}) we have~\cite[Lemma 
1]{NesterovPo06}
\begin{equation}\label{eq:tr-norm-bound-2}
  \left\| \grad g(\yout) - (b_\kf + A \Delta_\kf)\right\| \le \rho 
  \|\Delta_\kf\|^2 \ .
\end{equation}
\end{subequations}
Combining the norm bounds~\eqref{eq:tr-norm-bound-1} 
and~\eqref{eq:tr-norm-bound-2}, and using $\|\grad f_\kf(\Delta_\kf)\| 
\le
\epsilon/2$ and $\rho \|\Delta_\kf\|^2 \le \rho \|\siter_\kf\|^2 \le 
\epsilon/9$ yields
\begin{equation*}
  \|\grad g(\yout)\| \le \|\grad f_\kf(\Delta_\kf)\| + 2\rho \|\Delta_\kf\|^2 \le \epsilon,
\end{equation*}
which completes the proof of $\epsilon$-second-order
stationarity~\eqref{eq:second-order-crit} of $\yout$. 

We now bound the total number of gradient descent iterations
Algorithm~\ref{alg:SP} uses. Noting that $d/\delta' \le 2\Kmax^2 
d/\delta> 1$ and that 
$1/(\eta\rho\slb) > \beta/\sqrt{\rho\epsilon} \ge 1$, we see that a call
to \callSSP{} performs at most $O(1)\beta\rho^{-1/2}\epsilon^{-1/2}( 
\log(d/\delta)+\log\Kmax+\log(\beta/\sqrt{\rho\epsilon}))$ iterations. 
Substituting the bound~\eqref{eq:Kmax-def} on $\Kmax$, the number of 
iterations has the further upper bound
\begin{equation*}
  O(1) \cdot \frac{\beta}{\sqrt{\rho\epsilon}}
  \log\left(\frac{d}{\delta}\cdot\frac{\beta{g(y_0)-\glb}}{\epsilon^{2}}\right).
\end{equation*}
Multiplying this bound by the upper bound on $\Kmax$ shows that the total 
number of steps in all calls \callSSP{} is bounded 
by~\eqref{eq:trustregion-complexity}.

Finally,
standard analysis~\cite[Ex.~1.2.3]{Nesterov04} of gradient descent
on smooth functions shows that \callSFSP{}, which we call exactly once, 
terminates after at most
$2\frac{f(x_0)-f(\siter_\kf)}{\eta (\epsilon/2)^2} \le 
80\beta(g(y_0)-\glb)\epsilon^{-2}$
iterations, as $f_{\kf}$ is $\beta + 2 \rho R$-smooth and
$\eta \le \frac{1}{\beta + 2 \rho R}$.

\bibliographystyle{abbrvnat}

\end{document}